\title{On edge-direction and compact edge-end spaces}
\author{Gustavo Boska \and Matheus Duzi \and Paulo Magalhães Júnior}
\newcommand{\Addresses}{{
  \bigskip
  \footnotesize
  G. ~Boska, \textsc{Instituto de Ci\^encias Matem\'aticas e de Computa\c c\~ao, Universidade de S\~ao Paulo\\
	Avenida Trabalhador s\~ao-carlense, 400,  S\~ao Carlos, SP, 13566-590, Brazil}\par\nopagebreak
  \textit{E-mail address}, G.~Boska: \texttt{gustavo.boska@usp.br}

  \medskip
  M.~Duzi, \textsc{Instituto de Ci\^encias Matem\'aticas e de Computa\c c\~ao, Universidade de S\~ao Paulo\\
	Avenida Trabalhador s\~ao-carlense, 400,  S\~ao Carlos, SP, 13566-590, Brazil}\par\nopagebreak
  \textit{E-mail address}, M.~Duzi: \texttt{matheus.duzi.costa@usp.br}
  
  \medskip
  P.~Magalhães Jr, \textsc{Instituto de Ci\^encias Matem\'aticas e de Computa\c c\~ao, Universidade de S\~ao Paulo\\
	Avenida Trabalhador s\~ao-carlense, 400,  S\~ao Carlos, SP, 13566-590, Brazil}\par\nopagebreak
  \textit{E-mail address}, P.~Magalhães Jr: \texttt{pjr.mat@usp.br}
 
}}
\definecolor{color1}{gray}{1}
\newcommand{\myref}[1]{\hyperref[#1]{\namecref{#1} \ref{#1}}}
\tikzset{
    buffer/.style={
        draw,
        shape border rotate=-90,
        isosceles triangle,
        isosceles triangle apex angle=60,
        node distance=2cm,
        minimum height=4em
    }
}
\tikzset{>={Latex[width=2mm,length=2mm]}}
\numberwithin{equation}{subsection}
\newtheorem{theorem}[equation]{Theorem}
\newtheorem{lemma}[equation]{Lemma}
\newtheorem{proposition}[equation]{Proposition}
\newtheorem{corollary}[equation]{Corollary}
\newtheorem{definition}[equation]{Definition}
\newtheorem{example}[equation]{Example}
\newtheorem*{theorem*}{Theorem}
\newtheorem*{proposition*}{Proposition}
\newtheorem*{corollary*}{Corollary}
\newtheorem*{lemma*}{Lemma}
\theoremstyle{remark}
\newtheorem{rmk}[equation]{Remark}
\DeclareMathOperator{\id}{id}
\newcommand{\st}{\, : \,}
\newcommand{\set}[1]{\left\{\, {#1} \,\right\}}
\newcommand{\seq}[1]{{\left\langle \, {#1} \,\right\rangle}}
\newcommand{\seqq}[1]{{\left\langle {#1} \right\rangle}}
\newcommand{\com}[1]{}
\newcommand{\restrict}[2]{{#1} \!\! \upharpoonright_{#2}}
\newcommand{\proofshow}{1}
\begin{document}
    
\maketitle
    \begin{abstract}
    
    Directions of graphs were originally introduced in the study of a cops-and-robbers kind of game, while the study of end spaces has been used to generalize classical graph-theoretical results to infinite graphs, such as Halin's generalization of Menger's theorem. An edge-analogue of end spaces, where finite sets of edges are used instead of vertices as separator agents to form the so-called edge-end space, has been recently used to obtain an edge-analogue of this later result. Inspired by Diestel's correspondence between directions and ends of a graph, we tackle in this paper an edge-analogue of directions, its relation with line graphs, and an edge-analogue of Diestel's correspondence. The results of this study had some implications over edge-end space compactness, which then became a target of inquiry: we thus show an edge-analogue of Diestel's combinatorial characterization for compact end spaces. Non-edge-dominating vertices play an important role in our characterization, which motivated the study of ends and directions using now finite sets of these vertices as separator agents, as done previously for edges, giving rise to other topological spaces associated with graphs. These new direction and end spaces once again motivate an analogue of Diestel's correspondence result, and further generalizations are obtained. All of these constructions define topological space-classes associated with graphs such as edge-end spaces and edge-direction spaces of graphs. The paper organizes these topological space-classes appearing throughout the text with representation results, as it was done by Kurkofka and Pitz, as well as Aurichi, Magalhães Júnior and Real. Most notably, we show that every compact edge-end space can be represented as the edge-direction space of a connected graph.

    \end{abstract}

    \begin{keywords}
        graph,
        end space,
        edge-end space,
        direction space,
        line graph,
        compactness,
        edge-dominating
    \end{keywords}

    \section{Introduction}
    The concept of \emph{ends} appeared in a more general topological context in \cite{Freudenthal1931}. The current graphical notion of ends was presented by Halin in \cite{Halinend} and recently gained momentum, since it helps to generalize some classical combinatorial results, as in \cite{Diestel2004}, where a base of a generalized cycle space was achieved using the fundamental cycles of normal spanning trees. 

We follow \cite{diestelgraphtheory} for the usual notations and conventions for basic \emph{graph theory}. All graphs in this text shall be considered simple and undirected, so our edges are sets of two distinct vertices $e = \{v_1,v_2\}$, in which case we say $v_1$ and $v_2$ are \emph{incident} in $e$. If $G$ is a graph, a \emph{$G$-ray} is an infinite path of neighboring vertices $\seq{v_1, v_2, \dotsc}$ such that $\{v_i,v_{i+1}\} \in \mathrm{E}(G)$ for $i \in \mathbb{N}$. Given $n\in\omega$, we say that $\seq{v_{n}, v_{n+1}, \dotsc}$ is a \emph{tail} of the $G$-ray $\seq{v_1, v_2, \dotsc}$. Two rays are said to be \emph{equivalent} if for every finite $F\subset \mathrm{V}(G)$, their tails lie within the same connected component of $G\setminus F$. This relation can be characterized by the existence of a third ray intercepting both rays in infinitely many vertices. This equivalence $r \sim s $ defines classes $[r]$ called \emph{ends}. The set of $G$-ends is denoted by $\Omega(G)$. 

There is a natural topology in $\Omega(G)$: given an end $\varepsilon\in \Omega (G)$ and a finite set of vertices $F\subset V(G)$, there exists a unique connected component $C(F,\varepsilon)$ of $G\setminus F$ that contains the tails of all representatives $r \in \varepsilon$. In this case, if we define  \[\Omega(F,\varepsilon)=\lbrace \eta \in \Omega (G) \,:\, C(F, \eta) = C(F, \varepsilon) \rbrace\subset \Omega(G) ,\]
then the collection $\lbrace \Omega (F, \varepsilon) : F\in [\mathrm{V}(G)]^{<\aleph_0}, \varepsilon\in \Omega (G)\rbrace$, where $[\mathrm{V}(G)]^{<\aleph_0}$ denotes the family of finite subsets of $\mathrm{V}(G)$, can be construed as a basis of the (now topological) \emph{end space}. We say a vertex $v$ \emph{dominates a ray} $r$ if the tail of $r$ lies withing the same connected component of $G\setminus F$ for every $F\subset \mathrm{V}(G)\setminus \{v\}$ (or, equivalently, if there are infinitely many vertex-disjoint paths from $v$ to $r$), and it \emph{dominates an end} when it dominates one of its representatives. We denote $\mathrm{S}_{G,F}$ as the set (or discrete space) whose elements are sets of vertices of the infinite connected components of $G \backslash F$. 

Ends and end spaces are well studied, for example, in \cite{Endspacesandspanningtrees}, \cite{representation} and \cite{pitz}. If we make an analogue of end spaces where edge-connectivity becomes the protagonist, one quickly arrives at edge-end spaces. This construction appears in \cite{lav} and has been sparsely explored in the literature. In the following, we define the edge-end space. Two $G$-rays are said to be \emph{edge-equivalent} if, for every finite $F\subset \mathrm{E}(G)$, their tails lie within the same connected component of $G\setminus F$. This equivalence relation's classes are the \emph{edge-ends}. The set of edge-ends of a graph $G$ is denoted by $\Omega_E(G)$. By a process similar to the one made in the end space, we define a topology in $\Omega_E(G)$: given an edge-end $\varepsilon\in \Omega_E (G)$ and a finite set of edges $F\subset \mathrm{E}(G)$, there is a unique connected component $C(F,\varepsilon)$ of $G\setminus F$ that contains a tail of $\varepsilon$. Hence, if we define $$\Omega_E(F,\varepsilon)=\lbrace \eta \in \Omega_E (G) : C(F, \eta) = C(F, \varepsilon) \rbrace ,$$ then the family $\lbrace \Omega_E (F, \varepsilon) : F\in [E(G)]^{<\aleph_0}, \varepsilon\in \Omega_E (G)\rbrace$ can be considered as a basis for the \emph{edge-end space} $\Omega_E (G)$. 

We say a vertex $v$ \emph{edge-dominates a ray} $r$ if no finite set of edges separate them, and it \emph{edge-dominates an end} when it dominates one of its representatives. We say that a vertex is \emph{timid} if it does not edge-dominate any ray in $G$. We denote by $\mathrm{t}(G)$ the set of timid vertices of $G$. Although for locally finite graphs $\Omega(G)\approx \Omega_E(G)$, \myref{nonadmissiblevertexset} shows us this is not the case in general.

\begin{figure}[ht]
\centering
\begin{tikzpicture}
    \def\radius{0.15}
    \def\dy{0.7}
    \def\dx{1}

    \begin{scope}[shift = {(11*\dx,0)}]
        \draw[->] (-0.5*\dx,\dy)-- (7.5*\dx,\dy);
        \draw[->] (-0.5*\dx,-\dy)-- (7.5*\dx,-\dy);

        \foreach \i in {0,...,7}
        {
            \draw (3.5*\dx,0) -- ({\i*\dx},\dy);
            \draw (3.5*\dx,0) -- ({\i*\dx},-\dy);
        }
    \end{scope}
\end{tikzpicture}
\caption{For this graph $G$, $\Omega(G)$ has two distinct ends, but $\Omega_E(G)$ has only one edge-end.}
\label{nonadmissiblevertexset}
\end{figure}
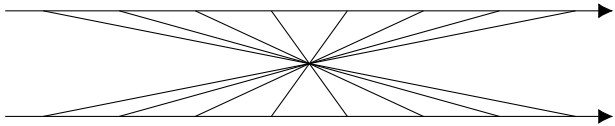

It was recently shown that every edge-end space is an end space, but not the other way around:

\begin{theorem*}[Theorem 1.1 in \cite{aurichi2024topologicalremarksendedgeend}]
    \sloppy The class $\Omega_E=\lbrace \Omega_E(G) : G \text{ is a graph}\rbrace$ is a proper subclass of $\Omega= \lbrace \Omega (G): G \text{ is a graph}\rbrace$, i.e., every edge-end space is the end space of some graph, but the converse does not hold.
\end{theorem*}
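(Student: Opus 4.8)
The statement splits into two independent tasks: the inclusion, that every $\Omega_E(G)$ is homeomorphic to $\Omega(H)$ for some graph $H$, and the strictness, that some end space is not homeomorphic to any edge-end space.

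For the inclusion, the plan is to realise $\Omega_E(G)$ as an inverse limit and then encode that limit by a graph. For a finite set $F\subseteq \mathrm{E}(G)$ let $\mathrm{S}^E_{G,F}$ be the (discrete) set of infinite components of $G\setminus F$; refining $F$ yields an inverse system, and the map $\varepsilon\mapsto\seq{C(F,\varepsilon)}_F$ embeds $\Omega_E(G)$ into $\varprojlim \mathrm{S}^E_{G,F}$. When $G$ is countable I would fix an enumeration of $\mathrm{E}(G)$, take the cofinal chain $F_0\subseteq F_1\subseteq\cdots$ of initial segments, and build the \emph{component tree} $T$ whose level-$n$ nodes are the members of $\mathrm{S}^E_{G,F_n}$, ordered by reverse inclusion; the branches of $T$ are exactly the coherent sequences of infinite edge-components, so $\Omega(T)\cong\varprojlim\mathrm{S}^E_{G,F_n}$. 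Since deleting \emph{edges} of $G$ is exactly deleting the corresponding \emph{vertices} of the line graph $L(G)$, this is the same inverse system one reads off from $L(G)$, which is the sense in which edge-ends are governed by the line graph.

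Here lies the step I expect to be the main obstacle. The inverse limit is in general strictly \emph{larger} than $\Omega_E(G)$: a coherent sequence of infinite edge-components need not contain the tail of any $G$-ray. This is precisely the mechanism behind \myref{nonadmissiblevertexset} — an edge-dominating vertex $v$ of infinite degree produces a branch that ``stays near $v$ forever'' without being witnessed by a ray (in the line graph, $v$ becomes an infinite clique whose own end is spurious). Thus $\Omega_E(G)$ sits inside $\Omega(T)$ only as the subspace of branches hit by a genuine ray, and the work is to show that this subspace is \emph{itself} an end space. My plan is to analyse the spurious branches: they are controlled by the edge-dominating behaviour of timid versus non-timid vertices, and I would argue that deleting them amounts to passing to a clopen-type decomposition of $\Omega(T)$, each surviving piece being the end space of a single infinite edge-component and hence a genuine end space, after which a disjoint-union argument reassembles $\Omega_E(G)$ as an end space. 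The uncountable case, where the finite-subset poset is no longer countably cofinal and the tree must be replaced by a more general ordered template, is where I expect the representation machinery in the style of Kurkofka--Pitz to be needed.

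For strictness the plan is to isolate a topological invariant enjoyed by every edge-end space but failing for some end space, and then exhibit a witnessing graph $G_0$. The heuristic from the inclusion is that edge-separation is strictly \emph{coarser} than vertex-separation — an infinite-degree vertex merges ends that a single vertex-deletion would split, as in \myref{nonadmissiblevertexset} — so edge-end spaces should be unable to reproduce certain fine local splittings that vertex-ends can force. Concretely I would build $G_0$ so that one of its ends is approached by infinitely many others in a manner that can only be ``unmerged'' by vertex-cuts and never by finite edge-cuts, and then show, using the subspace description above (the real edge-ends of any $H$ are the ray-witnessed branches of its edge-component tree), that no edge-end space can realise this local configuration up to homeomorphism. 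The genuine difficulty — and the heart of the whole statement — is that the natural candidate invariants (cardinality, compactness, metrisability, even being a Cantor set, since both classes contain it) do \emph{not} separate the two classes; promoting the coarseness heuristic into a property that is both homeomorphism-invariant and provably enjoyed by \emph{every} edge-end space is the crux I would have to resolve.
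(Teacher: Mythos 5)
Both halves of your proposal stop short of a proof, and the first half rests on a step that provably fails. Your inverse limit $\varprojlim \mathrm{S}^E_{G,F}$ is exactly the edge-direction space $\mathcal{D}_E(G)$, which the paper identifies with $\Omega(G')$ in \myref{edgedirectionsareendsofedgegraph}; you correctly see that $\Omega_E(G)$ embeds into it as the subspace of ray-witnessed branches, but your proposed remedy --- deleting the spurious branches via ``a clopen-type decomposition'' --- cannot work in general. By \myref{raylesscharact}, the set of rayless directions is open in $\mathcal{D}_E(G)$ (equivalently, $\iota_G(\Omega_E(G))$ is closed) precisely when $\Omega_E(G)$ is \emph{compact}; whenever it is not, as for the graph of \myref{thisexamplefig}, rayless directions lie in the closure of the genuine edge-ends, so no clopen (or even closed) splitting separates them, and the surviving subspace is not a disjoint union of component end spaces. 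Indeed the paper explicitly warns, right after \myref{notendspaceofedgegraph}, that a line-graph-based analogue of \myref{THM_HGHomeo} ``cannot easily be done'' --- and \myref{notendspaceofedgegraph} shows $\mathcal{D}_E\subsetneq \Omega_E$, so the line-graph frame genuinely misses some edge-end spaces. The actual proof of the inclusion (Theorem 2.1 of \cite{aurichi2024topologicalremarksendedgeend}, restated here as \myref{THM_HGHomeo}) works on the graph rather than on the limit: one expands each edge-dominating vertex $v$ of $G$ into a clique $K_{\deg v}$, obtaining $H_G$, so that finite edge-cuts of $G$ are simulated by finite vertex-cuts of $H_G$, and checks $\Omega(H_G)\cong \Omega_E(G)$ directly via the edge-translation map $\theta$. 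This sidesteps both the spurious-branch problem and the uncountable-cofinality issue you flag as open; your tree template also silently assumes countably cofinal edge-enumerations, which the clique construction never needs.

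For strictness you produce neither an invariant nor a witnessing space, and you say yourself that ``promoting the coarseness heuristic into a property\dots is the crux I would have to resolve'' --- so this half is a research plan, not a proof. You are right that compactness, metrizability and Cantor-ness do not separate the classes, but note how sharply the known witness defeats the coarseness heuristic: per \cite{aurichi2024topologicalremarksendedgeend} (recorded in Figure \ref{vennfig} of this paper), the separating example is the \emph{Alexandroff duplicate of the Cantor space}, a compact end space that is no edge-end space. The separation thus happens inside the compact world, where --- by \myref{THM_CompectEdgeEndDirection} and \myref{compactnesscharacterization} of this paper --- edge-end spaces are tightly constrained (compact edge-end spaces are exactly the edge-direction spaces of connected graphs, and compactness is governed by finite sets of timid vertices); establishing that the duplicate violates these constraints requires the star-comb and edge-domination analysis of the cited paper, none of which your sketch supplies.
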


The fact above suggests that edge-end spaces comprise a more restricted class of topological spaces then that of end spaces. This class of spaces also gives a Menger-type result, proved in \cite{real}. These results motivate further inquiry over properties of the edge-end spaces. 

A \emph{direction of a graph $G$} is a map $\rho$ that takes a finite set of vertices $F$ and gives us a non-empty connected component $\rho(F)$ of $G \backslash F$ such that if $A \subset B$ are finite sets of vertices then $\rho(B) \subset \rho(A)$ (in particular, note that $\rho(F)$ must always be infinite). This notion came to light, for example, in the study of strategies in a game of criminals evading the police in \cite{Robertson1991_CopsAndRobbers}. As it was done for ends, using finite edge-sets as the means for separation leads to the definition of the edge-directions $\mathcal{D}_E(G)$. The set $\mathcal{D}_E(G)$ becomes a topological space, since it appears as the inverse limit of the system of discrete spaces of connected components. A correspondence between ends and directions of $G$ was established in \cite{DIESTEL2003197}, so we begin by posing the question if this fact has an edge-analogue, which shall be given in \myref{edgedirectionsareendsofedgegraph}. 

Let $G$ be any graph, define the \emph{line-graph of $G$}, denoted by $G'$, to be a graph whose vertices are in correspondence with $G$-edges (that is, $\mathrm{V}(G')\doteq \{v_e\}_{e \in \mathrm{V}(G)}$) such that $\{v_{e_1},v_{e_2}\}$ is an edge of $G'$ exactly when $e_1$ shares a vertex with $e_2$. Using the notation $X \approx Y$ for the homeomorphism relation between spaces, we prove in \myref{edgedirection}: 

\begin{theorem*}[\ref{edgedirectionsareendsofedgegraph}]
    \sloppy For any graph $G$, $\mathcal{D}_E(G) \approx \Omega(G')$. Therefore
$\mathcal{D}_E=\lbrace \mathcal{D}_E(G): G \text{ is a graph}\rbrace = \Omega'= \lbrace \Omega(G'): G \text{ is a graph}\rbrace$. 
\end{theorem*}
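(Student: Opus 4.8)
The plan is to factor the desired homeomorphism through the \emph{vertex}-direction space of the line graph, writing $\mathcal{D}_E(G) \approx \mathcal{D}(G') \approx \Omega(G')$, where the second homeomorphism is exactly Diestel's correspondence between directions and ends applied to the graph $G'$ (established in \cite{DIESTEL2003197}). Thus the entire burden of the proof falls on the first homeomorphism $\mathcal{D}_E(G) \approx \mathcal{D}(G')$, which I would obtain by exhibiting an isomorphism between the two inverse systems whose limits are, by definition, these spaces: $\mathcal{D}_E(G) = \varprojlim_{F} \mathrm{S}_{G,F}$ ranging over finite edge-sets $F \subseteq \mathrm{E}(G)$, with $\mathrm{S}_{G,F}$ the discrete space of infinite components of $G \setminus F$, and $\mathcal{D}(G') = \varprojlim_{F'} \mathrm{S}_{G',F'}$ ranging over finite vertex-sets $F' \subseteq \mathrm{V}(G')$.

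First I would set up the index correspondence $F \mapsto F' \doteq \{v_e : e \in F\}$, which is an order-isomorphism between $([\mathrm{E}(G)]^{<\aleph_0}, \subseteq)$ and $([\mathrm{V}(G')]^{<\aleph_0}, \subseteq)$, since $e \mapsto v_e$ is a bijection $\mathrm{E}(G) \to \mathrm{V}(G')$. The heart of the argument is the fibrewise bijection: for each finite $F$, the map $C \mapsto \mathrm{E}(C)$ sending a component $C$ of $G \setminus F$ to its edge-set is a bijection between the \emph{infinite} components of $G \setminus F$ and the \emph{infinite} components of $G' \setminus F'$. Here the restriction to infinite components (i.e.\ working with $\mathrm{S}_{G,F}$ rather than with all components) is essential and is the conceptual crux: an infinite component $C$ of $G \setminus F$ necessarily has infinitely many edges, and these form a connected---hence maximal connected, i.e.\ a component---infinite subset of $G' \setminus F'$; conversely an infinite component $D$ of $G' \setminus F'$ consists of infinitely many edges whose endpoints span an infinite connected subgraph of $G \setminus F$, recovering a unique infinite component $C$ with $\mathrm{E}(C) = D$. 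I would prove well-definedness and surjectivity through exactly this ``connected in $G'$ iff chained by shared vertices in $G$'' translation, and injectivity from the fact that an infinite component is determined by the endpoints of its edges.

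I would then check that these fibrewise bijections commute with the bonding maps of the two systems: for $F_1 \subseteq F_2$ the bond sends an infinite component to the (necessarily infinite) component containing it, and since $C_2 \subseteq C_1$ implies $\mathrm{E}(C_2) \subseteq \mathrm{E}(C_1)$, the square through $C \mapsto \mathrm{E}(C)$ commutes. Having an order-isomorphism of index posets together with compatible fibrewise bijections constitutes an isomorphism of inverse systems of discrete spaces, which induces a homeomorphism of the inverse limits, giving $\mathcal{D}_E(G) \approx \mathcal{D}(G')$. Composing with Diestel's correspondence yields $\mathcal{D}_E(G) \approx \Omega(G')$, and the class equality $\mathcal{D}_E = \Omega'$ follows at once, since the homeomorphism holds for every graph $G$ and every member of $\Omega'$ is $\Omega(G')$ for some $G$.

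The step I expect to be the main obstacle is the fibrewise bijection, specifically making watertight that restricting to infinite components is exactly what makes $C \mapsto \mathrm{E}(C)$ a bijection: one must rule out the ``spurious'' edge-directions that stabilize at a finite-degree vertex (which, since we never delete a vertex when removing edges from $G$, are formally consistent but have no counterpart among the ends of $G'$), and this is precisely what the infinite-component convention built into $\mathrm{S}_{G,F}$ excludes. A secondary point requiring care, if Diestel's correspondence is quoted only for connected graphs, is that $G'$ need not be connected; but since ends and directions detect only infinite components and each such component is itself connected, the correspondence applies componentwise without difficulty.
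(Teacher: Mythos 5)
Your proposal is correct and follows essentially the same route as the paper: the paper's proof likewise sets up the index correspondence $F \mapsto F' = \{v_e : e \in F\}$ and the fibrewise bijection $C \leftrightarrow C' = \{v_e : e \in \mathrm{E}(G[C])\}$, verifies the commuting square with the bonding maps, and invokes Proposition 2.5.10 of \cite{eng} to identify the inverse limits, with the identification $\varprojlim \mathrm{S}_{G',\bullet} \approx \Omega(G')$ (your appeal to Diestel's correspondence) used implicitly via the paper's earlier description of end spaces as inverse limits. Your explicit attention to the infinite-component restriction ruling out spurious threads at finite-degree vertices, and to the componentwise application of Diestel's correspondence when $G'$ is disconnected, makes rigorous exactly the points the paper treats tersely.
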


The above result allows us to see directions of a graph as ends of another one, making some calculations simpler. The following are some consequences, the second of which being an edge-analogue of \cite{DIESTEL2003197}'s Theorem 2.2:

\begin{corollary*}[\ref{COR_EdgeDirCompact}]
    For every connected graph $G$, the space $\mathcal{D}_E(G)$ is compact, meaning that the end space of every connected line-graph is compact.
\end{corollary*}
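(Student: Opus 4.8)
The plan is to prove compactness directly from the inverse-limit description of $\mathcal{D}_E(G)$, and only afterwards transport the conclusion to $\Omega(G')$ through the homeomorphism of \myref{edgedirectionsareendsofedgegraph}. One should resist the temptation to argue the other way around—deducing compactness of $\mathcal{D}_E(G)$ from some general fact about end spaces of connected graphs—because end spaces of connected graphs are in general \emph{not} compact (recognising when they are is precisely a concern of this paper). The whole point is that edge-separation is far more rigid than vertex-separation.

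The key step is a finiteness lemma: if $G$ is connected and $F \in [\mathrm{E}(G)]^{<\aleph_0}$, then $G \setminus F$ has at most $|F|+1$ connected components. This follows by a short induction on $|F|$, since deleting one edge from a connected graph yields at most two components and each further deletion increases the number of components by at most one. (By contrast, deleting a single vertex—say the centre of an infinite star—can produce infinitely many components, which is exactly why the vertex analogue of this corollary fails.) With the lemma in hand, recall that $\mathcal{D}_E(G)$ is, by construction, the inverse limit $\varprojlim_F \mathcal{C}(G\setminus F)$ over the directed poset $[\mathrm{E}(G)]^{<\aleph_0}$, where $\mathcal{C}(G\setminus F)$ is the discrete space of components of $G\setminus F$ and the bonding map sends a component of $G\setminus F'$ to the unique component of $G\setminus F$ containing it whenever $F\subseteq F'$. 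By the lemma each factor $\mathcal{C}(G\setminus F)$ is finite, hence compact, so $\prod_F \mathcal{C}(G\setminus F)$ is compact by Tychonoff's theorem. The inverse limit is the subset of this product carved out by the consistency conditions $\pi_{F,F'}(x_{F'})=x_F$; as the factors are Hausdorff these conditions define a closed set, and a closed subspace of a compact space is compact. Hence $\mathcal{D}_E(G)$ is compact.

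Finally, \myref{edgedirectionsareendsofedgegraph} gives $\mathcal{D}_E(G)\approx\Omega(G')$, so $\Omega(G')$ is compact as well; since the line graph of a connected graph is connected and every connected line-graph arises as $G'$ for some connected $G$, this is exactly the assertion that the end space of every connected line-graph is compact. The only genuinely load-bearing step is the finiteness lemma—this is where connectedness enters—after which the result is the standard compactness of a profinite space.
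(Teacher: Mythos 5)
Your argument is correct and essentially the same as the paper's: the paper likewise observes that $\mathrm{S}^E_{G,F}$ is finite whenever $G$ is connected and invokes the standard fact (citing Engelking) that an inverse limit of compact Hausdorff spaces is compact, then transfers the conclusion to $\Omega(G')$ via \myref{edgedirectionsareendsofedgegraph}. One cosmetic caveat: the paper defines $\mathcal{D}_E(G)$ as the limit of the system of \emph{infinite} components $\mathrm{S}^E_{G,F}$ rather than of all components (a restriction the paper notes is genuinely needed for edge-directions), but this changes nothing in your proof, since those factors are still finite discrete spaces and the consistency conditions still carve out a closed subset of the compact product.
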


\begin{corollary*}[\ref{directionrepresentation}]
    Let $\rho \in \mathcal{D}_E(G)$ be any edge-direction, then the following is complementary:
    \begin{itemize}
        \item[(i)] There is a ray $r$ such that, for any finite edge-set $F$, $\rho(F)$ is the component of $G \backslash F$ that has an $r$-tail.
        \item[(ii)] There is a timid vertex of infinite degree $v$ such that, for any finite edge-set $F$, $\rho(F)$ is the component $G \backslash F$ containing $v$.
    \end{itemize}
\end{corollary*}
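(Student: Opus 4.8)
The plan is to transfer the statement to the line graph via \myref{edgedirectionsareendsofedgegraph} and, from a representative ray of the associated end of $G'$, read off whether $\rho$ concentrates at a single vertex or spreads along a ray of $G$. Before constructing anything, I would dispatch the easy half: that (i) and (ii) cannot hold at once. Suppose a ray $r$ witnesses (i) and a timid vertex $v$ of infinite degree witnesses (ii). Then for every finite $F \subseteq \mathrm{E}(G)$ both a tail of $r$ and the vertex $v$ lie in $\rho(F)$, hence in a common component of $G \setminus F$; as $F$ was arbitrary, no finite edge-set separates $v$ from $r$, so $v$ edge-dominates $r$, contradicting timidity. Thus the alternatives are mutually exclusive, and it remains to show at least one always occurs.

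For the existence half I would use \myref{edgedirectionsareendsofedgegraph} to view $\rho$ as an end $\varepsilon$ of $G'$ and fix a representative ray; unravelling the definition of $G'$, such a ray is exactly an infinite sequence of distinct $G$-edges $e_1, e_2, \dotsc$ in which $e_i$ and $e_{i+1}$ share a vertex. Recording the shared vertices yields an edge-injective walk $W = x_0 x_1 \dotsc$ in $G$ with $e_i = \{x_{i-1}, x_i\}$, and one checks that the correspondence identifies $\rho$ with the map sending each finite $F$ to the component of $G \setminus F$ eventually containing the tail of $W$; this tail is well defined precisely because $W$ uses each edge at most once, so it leaves $F$ forever. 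The proof then splits on whether some vertex of $G$ is incident to infinitely many of the $e_i$.

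If some vertex $v$ is incident to infinitely many $e_i$, then $v$ has infinite degree, and since infinitely many such edges avoid any given finite $F$ we get $v \in \rho(F)$, whence $\rho(F) = C(F,v)$. If $v$ is timid this is case (ii); otherwise $v$ edge-dominates some ray $s$, whose tail then lies in $C(F,v) = \rho(F)$ for all $F$, giving case (i). In the complementary situation every vertex meets only finitely many $e_i$, so $H = \bigcup_i e_i$ is connected, infinite and locally finite; being edge-injective, $W$ eventually leaves every finite edge-set of $H$ and hence determines an end of $H$, which by local finiteness (using $\Omega(H) \approx \Omega_E(H)$ together with a standard König-type argument) is realized by a ray $r \subseteq H$. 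For every finite $F \subseteq \mathrm{E}(G)$ the tails of $r$ and of $W$ then share a component of $H \setminus F$, hence of $G \setminus F$, so the tail of $r$ lies in $\rho(F)$ and $r$ witnesses case (i).

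I expect the main obstacle to lie in the bookkeeping of the second and fourth paragraphs: pinning down exactly how the homeomorphism of \myref{edgedirectionsareendsofedgegraph} identifies $\rho(F)$ with the eventual $W$-component of $G \setminus F$, and verifying that the ray extracted from the walk in the locally finite case converges to that very direction rather than to some other end of $H$. The remainder is a clean pigeonhole dichotomy combined with the definition of edge-domination.
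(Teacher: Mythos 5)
Your overall route is the paper's own: transfer $\rho$ to an end $[r']$ of $G'$ via \myref{edgedirectionsareendsofedgegraph}, split according to whether the $G$-subgraph spanned by the edges appearing along $r'$ is ray-like or star-like, and obtain mutual exclusivity by observing that a simultaneous witness to (i) and (ii) would make $v$ edge-dominate $r$. The paper packages the dichotomy by applying the star--comb lemma (\myref{starcomb}) to $(r')^*$ and closes with exactly your timid-versus-edge-dominating remark; your pigeonhole on vertex incidences plus the K\H{o}nig/locally-finite-ends argument in Case B is the same dichotomy in slightly more hands-on clothing, so in substance the two proofs coincide.

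There is, however, one false step in your second paragraph: a $G'$-ray $v_{e_1},v_{e_2},\dotsc$ does \emph{not} in general yield a walk $x_0x_1x_2\dotsc$ in $G$ with $e_i=\{x_{i-1},x_i\}$. If $e_i\cap e_{i+1}=\{b\}$ and $e_{i+1}\cap e_{i+2}=\{b\}$ --- for instance $e_i=\{v,a_i\}$ for a $G'$-ray running through the clique of edges at a single vertex $v$, which is precisely the star configuration at the heart of case (ii) --- then the walk would force $e_{i+1}=\{b,b\}$, which is impossible. So the object $W$ on which you hang the identification ``$\rho(F)$ is the component eventually containing the tail of $W$'' may simply not exist, and the same problem can occur in your Case B whenever the ray revisits the edge-neighborhood of a finite-degree vertex non-consecutively. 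The gap is repairable without any new idea: work with the edge sequence $(e_i)$ itself. Since consecutive $e_i$ share a vertex and only finitely many meet a given finite $F$, all $e_i$ with $i$ large lie in a single component of $G\setminus F$ (respectively of $H\setminus F$), and under the homeomorphism of \myref{edgedirectionsareendsofedgegraph} that component is exactly $\rho(F)$. With this substitution, your Case A (pigeonhole, then timid versus edge-dominating, matching the paper's closing observation) and your Case B (locally finite $H$, ray chosen in the end of $H$ determined by the tail of the edge sequence) both go through as you outline them.
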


Note that case (ii) illustrates a different behavior of edge-directions in contrast to vertex directions: they may not always be represented by rays. A \emph{rayless direction} is thus a direction that can only be represented by a timid vertex with infinite degree. We thus obtained:

\begin{corollary*}[{\ref{raylesscharact}}]
    The following are equivalent, for a connected graph $G$:
    \begin{itemize}
        \item[(i)] The edge-end space of a graph $G$ is compact. 
        \item[(ii)] The canonical $\iota_G:\Omega_E(G) \rightarrow \mathcal{D}_E(G)$ includes $\Omega_E(G)$ as a closed subset of $\mathcal{D}_E(G)$.
        \item[(iii)] The set of rayless directions is an open set of $\mathcal{D}_E(G)$.
        \item[(iv)] Every timid vertex of infinite degree $v$ admits a finite edge-set $F$ such that $v \in C \in \mathrm{S}_{G,F}$ and $C$ is rayless.
    \end{itemize}
\end{corollary*}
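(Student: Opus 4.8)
The plan is to route everything through the structural picture supplied by \ref{directionrepresentation} and \ref{COR_EdgeDirCompact}. First I would record that the canonical map $\iota_G$ is in fact a topological embedding: it is injective because two edge-ends inducing the same direction agree on $C(F,\cdot)$ for every finite $F$ and are therefore edge-equivalent; it is continuous because $\iota_G^{-1}$ of a basic clopen set $\{\rho : \rho(F)=C\}$ is either empty or of the form $\Omega_E(F,\varepsilon)$; and it is open onto its image, since $\iota_G(\Omega_E(F,\varepsilon))=\{\rho : \rho(F)=C(F,\varepsilon)\}\cap \iota_G(\Omega_E(G))$. By \ref{directionrepresentation}, the image of $\iota_G$ is exactly the set of directions represented by a ray, so its complement in $\mathcal{D}_E(G)$ is precisely the set of rayless directions. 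Consequently $\Omega_E(G)$ is closed in $\mathcal{D}_E(G)$ if and only if the rayless directions form an open set; this is the equivalence (ii)$\Leftrightarrow$(iii), obtained by pure complementation.

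For (i)$\Leftrightarrow$(ii) I would invoke that $\mathcal{D}_E(G)\approx\Omega(G')$ is compact, by \ref{COR_EdgeDirCompact} and the connectedness of $G$, and Hausdorff, being an inverse limit of discrete spaces. If (ii) holds, then $\Omega_E(G)$ is homeomorphic, via the embedding $\iota_G$, to a closed subspace of a compact space, hence compact, giving (i). Conversely, if $\Omega_E(G)$ is compact, then its continuous image $\iota_G(\Omega_E(G))$ is a compact subset of a Hausdorff space and is therefore closed, giving (ii). This is the standard compact-to-Hausdorff argument and should be essentially automatic once the embedding and the compactness of $\mathcal{D}_E(G)$ are in hand.

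The remaining equivalence (iii)$\Leftrightarrow$(iv) is where the combinatorics lives, and this is the step I expect to be the main obstacle. The strategy is to understand when a basic clopen set $\{\rho : \rho(F)=C\}$, for $F$ finite and $C$ a component of $G\setminus F$, is contained in the rayless part. I would argue that such a set meets $\iota_G(\Omega_E(G))$ precisely when $C$ contains a ray: a ray with a tail in $C$ induces a ray-direction sending $F$ to $C$, and conversely any ray-direction through $C$ supplies such a ray. Hence $\{\rho:\rho(F)=C\}$ consists only of rayless directions exactly when $C$ is rayless. Pairing this with the fact, from \ref{directionrepresentation}, that every rayless direction $\rho$ equals the direction $\rho_v$ of some timid vertex $v$ of infinite degree, for which $\rho_v(F)$ is always the component containing $v$, the translation becomes immediate: (iii) says every $\rho_v$ has a basic clopen neighbourhood of rayless directions, i.e. a finite $F$ with $v\in C=\rho_v(F)$ and $C$ rayless, where $C$ is automatically infinite and so $C\in\mathrm{S}_{G,F}$; this is exactly (iv).

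The delicate points to nail down carefully are these: that a timid vertex $v$ of infinite degree genuinely defines a direction $\rho_v$, its value at $F$ being infinite because $v$ still has infinitely many incident edges outside the finite set $F$; that the two clauses of \ref{directionrepresentation} are mutually exclusive, so that $\rho_v$ is really rayless; and that ``$C$ rayless'' is equivalent to ``no ray-direction passes through $C$''. None of these is deep, but they are precisely the hinges on which the (iii)$\Leftrightarrow$(iv) translation turns, so I would state each as a short explicit sub-claim before assembling the chain (i)$\Leftrightarrow$(ii)$\Leftrightarrow$(iii)$\Leftrightarrow$(iv).
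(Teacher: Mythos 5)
Your proposal is correct and takes essentially the same approach as the paper: \myref{raylesscharact} is stated there without a separate proof, as an immediate consequence of the compactness (\myref{COR_EdgeDirCompact}) and Hausdorffness of $\mathcal{D}_E(G)$ combined with \myref{directionrepresentation}, which is exactly the chain you assemble. Your explicit sub-claims (the embedding property of $\iota_G$, the complementation giving (ii)$\Leftrightarrow$(iii), and the basic-clopen translation for (iii)$\Leftrightarrow$(iv)) simply fill in details the paper leaves implicit.
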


In \myref{directionedgerep} we ask if edge-direction spaces can be combinatorially realized as edge-end spaces of some graph, and the answer is again positive. Given a graph $G$, for each edge-direction (end of the line graph of $G$) that is \emph{not} represented by an edge-end in $G$ we add a ray connecting a star representing this direction, obtaining the \emph{completion graph $\tilde{G}$} of $G$. We thus obtain

\begin{theorem*}[{\ref{completiontheorem}}]
    Let $G$ be a graph and $\tilde{G}$ its completion. Then $\Omega(G')\approx \mathcal{D}_E(G) \approx \Omega_E(\tilde{G})$. 
\end{theorem*}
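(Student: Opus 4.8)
The plan is to establish the central homeomorphism $\mathcal{D}_E(G) \approx \Omega_E(\tilde G)$, since the remaining relation $\Omega(G') \approx \mathcal{D}_E(G)$ is exactly \myref{edgedirectionsareendsofedgegraph}. I would factor this central homeomorphism through the edge-direction space of the completion, proving $\mathcal{D}_E(G) \approx \mathcal{D}_E(\tilde G)$ and $\mathcal{D}_E(\tilde G) \approx \Omega_E(\tilde G)$ separately and then composing them.

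The relation $\mathcal{D}_E(\tilde G) \approx \Omega_E(\tilde G)$ is where the corollaries do the work. The completion is built precisely so that $\tilde G$ carries no rayless edge-direction: every star that witnessed a rayless direction of $G$ now has an attached ray, so by \myref{directionrepresentation} every edge-direction of $\tilde G$ is of type (i) and is represented by a ray. Hence the canonical map $\iota_{\tilde G}\colon \Omega_E(\tilde G)\to\mathcal{D}_E(\tilde G)$ is surjective, and being always an injective continuous map, it is a continuous bijection. Since the set of rayless directions of $\tilde G$ is empty, hence open, the implication (iii)$\Rightarrow$(ii) of \myref{raylesscharact} makes $\iota_{\tilde G}$ a closed map. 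A continuous closed bijection is a homeomorphism, so $\mathcal{D}_E(\tilde G)\approx\Omega_E(\tilde G)$.

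It then remains to show that completing $G$ leaves its edge-direction space unchanged, that is, $\mathcal{D}_E(G)\approx\mathcal{D}_E(\tilde G)$. I would exploit the line-graph description of \myref{edgedirectionsareendsofedgegraph} on both sides, reducing the claim to $\Omega(G')\approx\Omega(\tilde G')$. In the line graph, a timid vertex of infinite degree naming a rayless direction becomes an infinite clique of $G'$, which already contains a ray and so already determines an end of $G'$; the completion merely glues the edges of the new ray as a ray hanging off that clique in $\tilde G'$. Because an infinite clique minus any finite set of vertices remains infinite and connected, the glued ray is end-equivalent in $\tilde G'$ to the clique-ray, so the construction creates no new end and destroys none. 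Verifying that this bijection on ends matches the basic open sets $\Omega(S,\varepsilon)$ then promotes it to a homeomorphism, and transporting along \myref{edgedirectionsareendsofedgegraph} gives $\mathcal{D}_E(G)\approx\mathcal{D}_E(\tilde G)$.

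The main obstacle is exactly this preservation step: one must control the effect of the finite separators of $\tilde G$ on the added structure. Concretely, a finite edge-set $\tilde F\subseteq \mathrm{E}(\tilde G)$ may cut through the newly added rays, so when inverting the restriction map $\tilde\rho\mapsto\bigl(F\mapsto\tilde\rho(F)\cap\mathrm{V}(G)\bigr)$ one must check that after deleting $\tilde F$ each extended component still contains a canonical infinite piece, selected coherently by the underlying direction of $G$. I expect this bookkeeping---that the added rays are invisible to every finite separator coming from $G$, while cuts through the added edges neither merge nor split the $G$-components---to be the technical heart of the argument; once it is secured, continuity in both directions reduces to a routine comparison of the two bases.
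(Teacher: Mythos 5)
Your decomposition is a genuinely different organization from the paper's. The paper builds a single map $\psi_G\colon \mathcal{D}_E(G)\to\Omega_E(\tilde{G})$ out of \myref{directionrepresentation}, proves surjectivity by analysing arbitrary $\tilde{G}$-rays (\myref{directioncompleteness}: a ray using infinitely many new edges either clings to one star, hence is equivalent to $\tilde{r}_\varepsilon$, or can be rerouted through the star centres into an old ray), proves continuity via the correspondence $F\mapsto \tilde{F}$ with $|\tilde{F}|\le 4|F|$ from \myref{directionpreserving} --- which is precisely your ``bookkeeping'' step, carried out at the level of inverse systems rather than line-graph ends, and which yields $\mathcal{D}_E(G)\approx\mathcal{D}_E(\tilde{G})$ by cofinality (\myref{directionhomeomorphism}) --- checks injectivity directly, and finishes with the compact-to-Hausdorff argument. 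You instead factor through $\mathcal{D}_E(\tilde{G})$ and lead with direction-completeness of $\tilde{G}$, which in the paper appears only as a corollary \emph{after} the theorem. That reorganization is legitimate, but be aware that ``$\tilde{G}$ carries no rayless edge-direction'' is not the one-liner you make it: by \myref{directionrepresentation} applied to $\tilde{G}$ you must exclude \emph{every} timid vertex of infinite degree in $\tilde{G}$, not just the chosen centres. Two checks are needed: first, no vertex acquires infinite degree from new edges alone (a vertex lies on at most $\deg_G$-many new rays, since membership in $\tilde{r}_\varepsilon$ requires $G$-adjacency to $v_\varepsilon$ and the centres are pairwise distinct by item (v) of the construction); second, a timid infinite-degree $G$-vertex $u\neq v_\varepsilon$ representing the same rayless direction satisfies $u\sim_E v_\varepsilon$ and hence edge-dominates $\tilde{r}_\varepsilon$ in $\tilde{G}$, so it is no longer timid there. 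This is essentially the content the paper routes through \myref{directioncompleteness}.

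There is one genuinely broken step: the inference that ``(iii)$\Rightarrow$(ii) of \myref{raylesscharact} makes $\iota_{\tilde{G}}$ a closed map.'' Statement (ii) asserts only that the \emph{image} of $\iota_{\tilde{G}}$ is closed in $\mathcal{D}_E(\tilde{G})$; once you have surjectivity this is vacuous (the whole space is closed) and says nothing about $\iota_{\tilde{G}}$ being a closed map, so your ``continuous closed bijection'' conclusion does not follow as written. Two repairs are available: use (iii)$\Rightarrow$(i) to get that $\Omega_E(\tilde{G})$ is compact and conclude via a continuous bijection from a compact space to a Hausdorff one (noting that \myref{raylesscharact} is stated for \emph{connected} graphs, whereas \myref{completiontheorem} is not, so this needs a reduction or extra care); or, more economically, observe that $\iota_{\tilde{G}}$ is always a topological embedding, since $\iota_{\tilde G}(\Omega_E(F,\varepsilon)) = \iota_{\tilde G}(\Omega_E(\tilde{G}))\cap \mathcal{D}_E(F,\rho_\varepsilon)$ for every finite $F$, and a bijective embedding is automatically a homeomorphism --- no compactness and no appeal to \myref{raylesscharact} at all. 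With that repair, the two checks above made explicit, and the preservation bookkeeping you yourself flagged carried out (as the paper does in \myref{directionpreserving}), your route goes through.
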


The graph $\tilde{G}$ above is \emph{direction-complete}, meaning that all of its edge-directions are represented by rays. This proves that the end space of line-graphs are a subclass of the \emph{class of edge-end spaces} $\Omega_E$. \myref{notendspaceofedgegraph} shows that this subclass relation is strict:

\begin{theorem*}[\ref{notendspaceofedgegraph}]
   \sloppy The inclusion of classes $\mathcal{D}_E=\lbrace \mathcal{D}_E(G): G \text{ is a graph}\rbrace \subsetneq \Omega_E= \lbrace \Omega_E(G): G \text{ is a graph}\rbrace$ is a strict one.
\end{theorem*}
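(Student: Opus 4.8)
The plan is to prove the non-strict inclusion and the strictness separately. The inclusion $\mathcal{D}_E\subseteq\Omega_E$ is immediate from Theorem~\ref{completiontheorem}: for every graph $G$ the completion $\tilde G$ satisfies $\mathcal{D}_E(G)\approx\Omega_E(\tilde G)$, so each edge-direction space already is an edge-end space. For the strictness I would isolate a topological invariant that every member of $\mathcal{D}_E$ enjoys but that some edge-end space fails; the natural candidate is \emph{local compactness}. The argument then splits into two independent claims: (a) every edge-direction space is locally compact, and (b) there is a graph $G_0$ whose edge-end space $\Omega_E(G_0)$ is not locally compact.

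For (a), write $G=\bigsqcup_i G_i$ as the disjoint union of its connected components. Then $G'=\bigsqcup_i G_i'$ and, taking $F=\emptyset$ in the basic neighbourhoods, each $\Omega(G_i')$ is clopen in $\Omega(G')$, so $\mathcal{D}_E(G)\approx\Omega(G')\approx\bigsqcup_i\Omega(G_i')$ is a topological sum. By Corollary~\ref{COR_EdgeDirCompact} every summand $\Omega(G_i')\approx\mathcal{D}_E(G_i)$ is compact (the $G_i$ are connected, and finite $G_i$ merely contribute empty summands). A topological sum of compact Hausdorff spaces is locally compact, proving (a); hence no non-locally-compact space can belong to $\mathcal{D}_E$.

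For (b) I would build an explicit tree $G_0$ realizing a ``two-level fan''. Start with a ray $R=v_0v_1v_2\cdots$, whose end I call $\varepsilon$. To each $v_n$ attach, by a single edge, a vertex $w_n$, and to each $w_n$ attach countably many pairwise disjoint rays $L_{n,1},L_{n,2},\dots$ (one edge from $w_n$ to the first vertex of each leg). Since $G_0$ is a tree, its edge-ends are exactly its ends: $\varepsilon$ together with the end $\varepsilon_{n,k}$ of each leg $L_{n,k}$. Each $w_n$ has infinite degree and is timid, because the unique edge joining $w_n$ to any given ray separates them; so by Corollary~\ref{directionrepresentation} every $w_n$ yields a \emph{rayless} edge-direction $\delta_n$, and these together with the $\varepsilon$'s exhaust $\mathcal{D}_E(G_0)$. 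A routine check of basic neighbourhoods gives the convergences $\varepsilon_{n,k}\to\delta_n$ (as $k\to\infty$) and $\delta_n\to\varepsilon$ (as $n\to\infty$) inside the compact space $\mathcal{D}_E(G_0)$, while $\Omega_E(G_0)=\mathcal{D}_E(G_0)\setminus\{\delta_n:n\geq1\}$ consists precisely of the ray-represented directions.

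Finally I would conclude the failure of local compactness. Because each $\delta_n$ is a limit of the points $\varepsilon_{n,k}\in\Omega_E(G_0)$ and $\varepsilon\in\Omega_E(G_0)$, the set $\Omega_E(G_0)$ is dense in the compact Hausdorff space $\mathcal{D}_E(G_0)$. A dense subspace $Y$ of a compact Hausdorff space is locally compact iff it is open, i.e.\ iff its complement is closed; but here the complement $\{\delta_n:n\geq1\}$ is not closed, since $\delta_n\to\varepsilon\notin\{\delta_n:n\geq1\}$. Thus $\Omega_E(G_0)$ is not locally compact, so by (a) it lies in $\Omega_E\setminus\mathcal{D}_E$, giving the strict inclusion. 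The main obstacle is the bookkeeping behind (b): pinning down $\mathcal{D}_E(G_0)$ and $\Omega_E(G_0)$ exactly—confirming that the only infinite-degree timid vertices are the $w_n$, that they are genuinely inseparable from rays (no finite edge-cut can place $w_n$ in a rayless component, which is exactly the failure of condition~(iv) in Corollary~\ref{raylesscharact}, showing at least non-compactness), and that the two families of convergences are the whole story—so that the clean dense-but-not-open dichotomy can be applied.
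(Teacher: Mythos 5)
Your proposal is correct, but it reaches the strictness through a genuinely different invariant than the paper does. The inclusion step is identical (both rely on \myref{completiontheorem}), and your counterexample graph is essentially the graph of \myref{thisexamplefig} (your intermediate vertices $w_n$ are immaterial; in the paper the spine vertices themselves are the timid vertices of infinite degree), so the resulting space is the same: isolated fan-ends $\varepsilon_{n,k}$, each fan closed and discrete, and one point $\varepsilon_0$ whose neighbourhoods contain all but finitely many fans. The difference lies in how one proves this space is not an edge-direction space. The paper argues directly against any homeomorphism $f\colon X\to\Omega(G')$: it first proves \myref{COR_LineGraphAccumulation} (every infinite subset of $\Omega(H')$ has an accumulation point when $H$ is connected), then pushes $f(\varepsilon_0)$ into a single connected component $H$ of $G$, uses the fact that $\varepsilon_0$ accumulates every doubly infinite family $\set{\varepsilon_{n_i}^{m_j}}$ to force the images of cofinally many \emph{entire} fans into $\Omega(H')$, and derives a contradiction from a single fan having no accumulation point. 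You instead isolate \emph{local compactness} as the separating invariant: you prove every space in $\mathcal{D}_E$ is a topological sum of the compact Hausdorff spaces $\mathcal{D}_E(G_i)\approx\Omega(G_i')$ over the connected components (the same decomposition the paper's cofinality argument implicitly navigates, but stated once and for all), and then show $\Omega_E(G_0)$ is not locally compact because it sits, via the canonical embedding $\iota$ of \myref{DEF_CanonicalEmbed} (which is indeed a topological embedding, as the basic open sets match up), as a dense but non-open subspace of the compact Hausdorff space $\mathcal{D}_E(G_0)$ — density from $\varepsilon_{n,k}\to\delta_n$, non-openness from $\delta_n\to\rho_\varepsilon$, invoking the standard fact that a dense locally compact subspace of a Hausdorff space is open. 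Both hinge on the same combinatorial feature (fans with no accumulation point converging, level by level, to rayless directions), but your route buys a cleaner, reusable statement — every edge-direction space is locally compact, indeed a sum of compacta, so $\mathcal{D}_E$ is separated from $\Omega_E$ by a named topological property — at the cost of verifying the component decomposition of $\mathcal{D}_E(G)$ and the embedding bookkeeping, whereas the paper's argument needs only \myref{COR_LineGraphAccumulation} and works with the homeomorphism directly. Your side remarks are also accurate: the failure of \myref{raylesscharact}(iv) at each $w_n$ gives non-compactness, though your argument correctly requires the stronger non-closedness of $\set{\delta_n \st n\ge 1}$, which you verify directly; and as you note, a full description of the topology of $\mathcal{D}_E(G_0)$ is unnecessary — density of the ray-represented directions and $\delta_n\to\rho_\varepsilon$ suffice.
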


Hence, a simpler analogue of \myref{THM_HGHomeo} from \cite{aurichi2024topologicalremarksendedgeend} which considers the line-graph construction in place of the more complicated construction which was presented cannot easily be done.

Motivated by \myref{raylesscharact}, we also search for an edge-analogue of the following result from \cite{Endspacesandspanningtrees} that characterizes graphs whose end spaces are compact:

\begin{theorem*}[Corollary 4.4 in \cite{Endspacesandspanningtrees}]\label{THM_CompactCharacterization}
    The end space of a graph $G$ is compact if, and only if, for every finite $F\subset \mathrm{V}(G)$, the collection $\tilde{\mathrm{S}}_{G,F}$ of non-rayless connected components is finite.
\end{theorem*}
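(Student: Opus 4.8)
The plan is to exploit, for each fixed finite $F \subset \mathrm{V}(G)$, the way the end topology breaks $\Omega(G)$ into the basic clopen pieces $\Omega(F,C) := \{\eta \in \Omega(G) : C(F,\eta) = C\}$, one for each connected component $C$ of $G \setminus F$. The first step I would record is the elementary observation that the components hosting ends are exactly the non-rayless ones. Indeed, if $C$ is non-rayless it contains a $G$-ray $r$, and since a tail of $r$ lies in $C$ we get $C(F,[r]) = C$, so $\Omega(F,C) \neq \emptyset$; conversely, if $\eta$ is any end, then its representative rays have tails inside $C(F,\eta)$, so that component contains a ray and is non-rayless. Consequently, for fixed $F$ the family $\{\Omega(F,C) : C \in \tilde{\mathrm{S}}_{G,F}\}$ is a partition of $\Omega(G)$ into nonempty open (indeed clopen) sets, indexed precisely by $\tilde{\mathrm{S}}_{G,F}$.

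For the forward implication I would argue by contraposition. Assuming some finite $F$ yields an infinite $\tilde{\mathrm{S}}_{G,F}$, the family above is an infinite cover of $\Omega(G)$ by pairwise disjoint nonempty open sets; since dropping any single member uncovers its (nonempty) part, this open cover admits no finite subcover, so $\Omega(G)$ fails to be compact. This direction is routine once the partition observation is in hand.

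For the converse I would pass through the end--direction correspondence of \cite{DIESTEL2003197}, which identifies $\Omega(G)$ homeomorphically with the direction space $\mathcal{D}(G) = \varprojlim_F \mathrm{S}_{G,F}$, the inverse limit of the discrete spaces of infinite components of $G \setminus F$ under the refinement maps sending a component of $G \setminus F'$ to the component of $G \setminus F$ containing it, for $F \subseteq F'$. The crucial point is that, under this correspondence, every direction is represented by a ray, so the value $\rho(F)$ of any direction is always a non-rayless component; hence every thread of the inverse system lives entirely in the subspaces $\tilde{\mathrm{S}}_{G,F}$, giving $\Omega(G) \approx \varprojlim_F \tilde{\mathrm{S}}_{G,F}$. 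If each $\tilde{\mathrm{S}}_{G,F}$ is finite, the product $\prod_F \tilde{\mathrm{S}}_{G,F}$ of finite discrete spaces is compact by Tychonoff, the inverse limit is a closed subspace of it, and therefore $\Omega(G)$ is compact.

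I expect the main obstacle to be the converse, and specifically the step confining all directions to the non-rayless components: this is exactly where the surjectivity half of the correspondence (every direction comes from a ray) is needed, together with the verification that the inverse-limit topology on $\mathcal{D}(G)$ matches the end topology (basic open sets $\Omega(F,\varepsilon)$ corresponding to cylinder sets). A more self-contained route for the converse would take an arbitrary open cover, refine it to basic opens, and build -- via a K\"onig-type argument on the finitely branching tree of non-rayless components ordered by refinement -- a nested descending sequence of components escaping every finite subfamily; but realizing this escaping thread as an honest end again requires producing a ray threading the components, which is precisely the content the correspondence already packages. I would therefore favor invoking the correspondence.
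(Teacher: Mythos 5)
Your proof is correct, and it follows essentially the same route that the paper itself sketches in the preamble of Section 2 (the statement being imported there from Corollary 4.4 of \cite{Endspacesandspanningtrees} without a full internal proof): necessity via the observation that an infinite $\tilde{\mathrm{S}}_{G,F}$ yields the disjoint open cover $\{\pi_F^{-1}(C)\}_{C\in\tilde{\mathrm{S}}_{G,F}}$ with no finite subcover, and sufficiency by identifying $\Omega(G)$ with the inverse limit of the spaces $\tilde{\mathrm{S}}_{G,F}$ -- legitimate since, by the end--direction correspondence of \cite{DIESTEL2003197}, every thread takes only non-rayless values -- which is a closed subspace of a compact product when each $\tilde{\mathrm{S}}_{G,F}$ is finite. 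No gaps; your use of surjectivity of $\rho_\bullet$ to confine threads to non-rayless components is exactly the point the paper's remark that rayless components give empty cylinders $\pi_F^{-1}(C)$ is encoding.
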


Applying \myref{THM_CompactCharacterization} to the graph $H_G$ constructed in \myref{THM_HGHomeo} (\cite{aurichi2024topologicalremarksendedgeend}), taking its construction into account, we arrive at the following edge-analogue:

\begin{theorem*}[\ref{compactnesscharacterization}]
    Let $G$ be any graph, the following are equivalent:
    \begin{itemize}
        \item[(i)] The space $\Omega_E(G)$ is compact.
        \item[(ii)] For every finite set of timid vertices $F$, the collection of connected components of $G \backslash F$ with a ray is finite (\myref{compactnesscharacterization}).
    \end{itemize}
\end{theorem*}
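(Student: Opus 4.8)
The plan is to transport the statement to the vertex-end world and reuse the known characterization there. By \myref{THM_HGHomeo} we have $\Omega_E(G)\approx\Omega(H_G)$, so $\Omega_E(G)$ is compact if and only if $\Omega(H_G)$ is; and \myref{THM_CompactCharacterization} applied to $H_G$ says the latter holds exactly when, for every finite $F\subseteq\mathrm{V}(H_G)$, only finitely many components of $H_G\setminus F$ carry a ray. Writing (H) for this last condition, the entire argument reduces to the equivalence (H)$\iff$(ii). To run this I would first unpack the construction of $H_G$ and record that $\mathrm{V}(H_G)$ splits into three kinds of vertices: (a) the timid vertices of $G$, preserved as honest cut-vertices; (b) one subdivision vertex per edge of $G$, whose deletion mimics deleting that edge; and (c), for each non-timid (hence infinite-degree, edge-dominating) vertex of $G$, the vertices of a gadget that is uncuttable by finite vertex sets and whose only ray sits inside the edge-end that vertex edge-dominates.

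The translation rests on two dictionary facts drawn from that construction: deleting a finite set of timid vertices from $H_G$ yields the same ray-carrying components as deleting it from $G$ (matching rays of $H_G$ with rays of $G$, and thus ends with edge-ends); and no finite vertex set meets a type-(c) gadget in a disconnecting way, so such gadgets never produce extra ray-components and never sever a hub from the rays it edge-dominates. Granting these, (H)$\Rightarrow$(ii) is immediate: a finite set $F$ of timid vertices of $G$ is in particular a finite subset of $\mathrm{V}(H_G)$, and by the first dictionary fact the ray-components of $G\setminus F$ are exactly the non-rayless components of $H_G\setminus F$, which (H) declares finite.

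For (ii)$\Rightarrow$(H) I would take an arbitrary finite $F\subseteq\mathrm{V}(H_G)$ and decompose it along the three types as $F=F_t\cup F_e\cup F_g$. The gadget part $F_g$ is harmless by the second dictionary fact, deleting $F_e$ corresponds to deleting a finite edge set $M$ of $G$, and $F_t$ is a finite set of timid vertices. Since deleting a single edge raises the number of components by at most one, and a rayless component can only split into rayless pieces, one gets
\[
\#\{\text{ray-components of }G\setminus(F_t\cup M)\}\ \le\ \#\{\text{ray-components of }G\setminus F_t\}\ +\ |M|,
\]
whose right-hand side is finite by (ii). Carrying this bound back through the dictionary controls the non-rayless components of $H_G\setminus F$, yielding (H). Note that the only inequality in the whole proof is the trivial edge-counting one above; all the real content has already been spent inside \myref{THM_CompactCharacterization}.

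The load-bearing — and main — obstacle is therefore the second paragraph: extracting from the construction in \myref{THM_HGHomeo} that $\mathrm{V}(H_G)$ decomposes into exactly these three types, and, most delicately, that the type-(c) hub gadgets are at once finite-vertex-uncuttable and ray-free outside the single edge-end they encode. This is exactly where timidity enters: a non-timid vertex edge-dominates a ray and so must be replaced by an uncuttable gadget, lest one vertex deletion in $H_G$ spuriously split that edge-end, whereas a timid vertex edge-dominates no ray and may survive as a genuine cut-vertex — which is precisely why the finite sets that matter in (ii) are sets of \emph{timid} vertices. As an independent check on the direction (i)$\Rightarrow$(ii), I would note it can also be obtained directly in $\mathcal{D}_E(G)$: given a finite timid $F$ with infinitely many ray-components, the images in the compact space $\mathcal{D}_E(G)$ of one ray per component subconverge to a direction that, by a pigeonhole over the finite set $F$ (the components can only reconnect through $F$) together with \myref{directionrepresentation}, is forced to be rayless, so $\Omega_E(G)$ is not closed in $\mathcal{D}_E(G)$ and hence, by \myref{raylesscharact}, not compact.
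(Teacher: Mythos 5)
Your overall route coincides with the paper's own proof: reduce via \myref{THM_HGHomeo} to $\Omega(H_G)$, apply \myref{THM_CompactCharacterization}, match ray-components of $G\setminus F$ and $H_G\setminus F$ for finite timid $F$ (this is exactly the paper's \myref{LEMMA_Compon_Bijec}), and bound the effect of the remaining vertices. However, your ``dictionary'' misdescribes $H_G$, and one of your two load-bearing facts is false as stated. In the construction used here, $\mathrm{V}(H_G) = (\mathrm{V}(G)\setminus D)\cup \bigcup_{v\in D}\mathrm{V}(K_v)$: there are \emph{no} per-edge subdivision vertices at all (edges between timid pairs are kept verbatim), so your type (b) class is empty; and your type (c) clique gadgets are precisely the vertices whose deletion mimics edge deletion --- removing $u^v\in K_v$ severs the connection encoding the $G$-edge $\{u,v\}$ while leaving the rest of $K_v$ connected. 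Consequently your second dictionary fact (``no finite vertex set meets a gadget in a disconnecting way, so gadgets never produce extra ray-components'') is wrong: deleting $u^v$ can split off the component on the $u$-side, which may well contain a ray. As written, your (ii)$\Rightarrow$(H) step therefore has a genuine gap: $F_e$ is always empty, $F_g$ is declared harmless, and your count never accounts for exactly the deletions that matter.

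The gap is local and fixable with tools you already have: reassign the clique vertices to the edge-deletion role, i.e., apply your $+|M|$ inequality with $M$ the set of $G$-edges encoded by $F_g = F\setminus F_t$. This is precisely the paper's \myref{PROP_Dominant_FinComp}, which says that deleting a single such vertex leaves at most two components, so each vertex of $F\setminus F_t$ raises the ray-component count by at most one; the paper's proof of \myref{compactnesscharacterization} is exactly your argument with this correction. What \emph{is} true about the gadgets is the other half of your claim --- no finite vertex set severs $K_v$ from the rays that $v$ edge-dominates --- and that is what makes \myref{LEMMA_Compon_Bijec} go through. Your independent check of (i)$\Rightarrow$(ii) via directions is sound in spirit (the pigeonhole over $F$ does force the accumulation direction to be rayless, since a vertex of $F$ carrying paths avoiding every finite edge set would edge-dominate the representing ray, contradicting timidity), but note that \myref{COR_EdgeDirCompact} and \myref{raylesscharact} are stated for \emph{connected} $G$, so the disconnected case must be treated separately (trivially: infinitely many ray-carrying components of $G$ yield an infinite discrete clopen partition of $\Omega_E(G)$).
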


Following our theme of searching for (strict) subclass relationships between these topological space classes associated with graphs, in \myref{compactnesscharacterization} we ask ourselves if the class of compact edge-end spaces is equal to the class of edge-direction spaces of connected graphs. \myref{THM_CompectEdgeEndDirection} answers this positively:

\begin{theorem*}[\ref{THM_CompectEdgeEndDirection}]
    The edge-end space of a graph $G$ is compact if, and only if, there is some connected graph $H$ such that $\Omega_E(G)\approx\Omega_E(H)\approx \mathcal{D}_E(H)$.
\end{theorem*}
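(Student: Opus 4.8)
The plan is to prove the two implications separately; the reverse one is essentially immediate, and the forward one reduces to a single realization step. For the easy direction ($\Leftarrow$), suppose such a connected $H$ exists. Since $H$ is connected, \myref{COR_EdgeDirCompact} gives that $\mathcal{D}_E(H)$ is compact; as $\Omega_E(G)\approx\Omega_E(H)\approx\mathcal{D}_E(H)$, the space $\Omega_E(G)$ is homeomorphic to a compact space, hence compact. That is the whole argument.

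For the forward direction ($\Rightarrow$), the first thing I would record is a clean sufficient condition: it is enough to produce a connected, \emph{direction-complete} graph $H$ with $\Omega_E(H)\approx\Omega_E(G)$. Indeed, if $H$ is connected and direction-complete then it has no rayless directions, so the canonical injection $\iota_H$ is surjective; by \myref{raylesscharact} (the rayless set being empty is open, whence $\Omega_E(H)$ is compact and $\iota_H$ embeds it as a closed subset of $\mathcal{D}_E(H)$) the map $\iota_H$ is then a homeomorphism, giving $\Omega_E(H)\approx\mathcal{D}_E(H)$ and hence the full chain. The cheapest source of such graphs is \emph{locally finite} connected graphs: they have no infinite-degree vertices, so by \myref{directionrepresentation}(ii) they admit no rayless directions, and for them one already has $\Omega(H)\approx\Omega_E(H)$. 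Thus it suffices to realize $\Omega_E(G)$ as the ordinary end space $\Omega(T)$ of a locally finite connected graph $T$: then $\Omega_E(G)\approx\Omega(T)\approx\Omega_E(T)\approx\mathcal{D}_E(T)$ and we may take $H=T$.

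The heart of the matter is this realization. I would first note that $\Omega_E(G)$ is always Hausdorff and zero-dimensional, since the sets $\Omega_E(F,\varepsilon)$ form a clopen basis, and that by hypothesis it is compact; when $G$ is countable it is moreover second-countable, hence a compact metrizable zero-dimensional space, and I would invoke the representation of such spaces as end spaces of locally finite trees (the relevant direction of the results in \cite{Endspacesandspanningtrees} and \cite{representation}) to obtain the desired $T$. The main obstacle is precisely this step, together with its cardinality caveat: for uncountable $G$ the space $\Omega_E(G)$ need not be metrizable, so no locally finite $T$ can exist, and one must instead build a connected direction-complete $H$ directly by surgery on $G$ — deleting the ray-free components and joining the finitely many survivors by a finite tree, which is legitimate by \myref{compactnesscharacterization} applied with $F=\varnothing$ and preserves $\Omega_E$, and then trimming every infinite-degree timid vertex $v$ to finite degree inside the rayless component isolating it that is furnished by \myref{raylesscharact}(iv). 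Checking that this simultaneous trimming destroys no edge-end, creates no new ones, and leaves the graph connected and free of rayless directions is the delicate point, and I would control it through \myref{compactnesscharacterization}. As a fallback, should the surgery only yield some connected $K$ with $\mathcal{D}_E(K)\approx\Omega_E(G)$ without direction-completeness, passing to the completion $H=\tilde{K}$ and combining \myref{completiontheorem} with the direction-completeness of $\tilde{K}$ recovers $\Omega_E(G)\approx\Omega_E(\tilde{K})\approx\mathcal{D}_E(\tilde{K})$.
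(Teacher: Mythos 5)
Your easy direction, your reduction of the hard direction to producing a connected graph $H$ with $\Omega_E(H)\approx\Omega_E(G)$ whose canonical map into $\mathcal{D}_E(H)$ is surjective (via \myref{raylesscharact}), and even your fallback through the completion are all sound, and your treatment of the metrizable case via locally finite trees works. But the substance of the theorem is precisely the case you defer: for non-metrizable $\Omega_E(G)$ your ``surgery'' is a plan whose crucial verification you explicitly acknowledge but do not carry out, and the specific plan has concrete failure modes. Trimming an infinite-degree timid vertex $v$ to finite degree means deleting infinitely many edges, possibly at infinitely many timid vertices simultaneously; this is not obviously $\Omega_E$-preserving. Finite edge-separators of the trimmed graph pull back to infinite sets of edges of $G$, so previously edge-equivalent rays can be torn apart (each witnessing path may pass through a \emph{different} trimmed vertex, so timidity of any single $v$ does not save you), previously edge-dominating vertices can become timid and spawn \emph{new} rayless directions, and detached rayless fragments must be discarded or reattached while keeping $H$ connected. \myref{compactnesscharacterization}, which you propose as the control mechanism, characterizes compactness of $\Omega_E$; it says nothing about invariance of the edge-end space under edge deletion, so it cannot close this gap. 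You are also missing a prerequisite the paper found unavoidable: distinct timid vertices may be mutually inseparable by finite edge sets ($u\sim_E v$), in which case their ``isolating rayless components'' from \myref{raylesscharact}(iv) interact and the local analysis at a single $v$ breaks down.

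The paper's proof resolves exactly these points, and by different means. It never deletes edges at a vertex: it first quotients the $\sim_E$-classes of timid vertices (\myref{THM_QuotientHomeo}, itself resting on the comb-extraction \myref{LEMMA_JustCombLemma}), so that every timid vertex is $\sim_E$-equivalent only to itself; then \myref{LEMMA_AloneTimid} shows such a vertex has finitely many neighbors in each component of $G\setminus\{v\}$, and \myref{LEMMA_DenseHConstruction} prunes, by a layered recursion, only \emph{whole rayless components hanging behind a single cut vertex}. This preserves $\Omega_E$ almost for free, because property (b) of that lemma forces every $G$-path between surviving vertices to stay inside $H$. The payoff is also weaker than what you aim for: the construction does not make $H$ direction-complete by hand, but only makes the image of $\iota\colon\Omega_E(H)\to\mathcal{D}_E(H)$ \emph{dense} (\myref{COR_DenseEdgesOnDirections}, since every basic neighborhood $\rho_v(F)$ of a rayless direction now contains an untouched non-rayless component); compactness of $\Omega_E(H)\approx\Omega_E(G)$ then makes the image closed in the Hausdorff space $\mathcal{D}_E(H)$, hence everything. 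So your criterion ``connected and direction-complete'' is sufficient but stronger than needed, and the actual work of the theorem --- the quotient step and the cut-vertex pruning with its dense-image argument --- is absent from your proposal.
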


\newcommand{\myrectangle}[5]{
    \draw (#1,#2) rectangle ++ (#3,#4);
    \node[shift = {({-#3*0.5},{-0.5*#4})}] at (#1+#3,#2+#4) {\large\textbf{#5}};
}
\newcommand{\myrectangletwo}[6]{
    \draw (#1,#2) rectangle ++ (#3,#4);
    \node[shift = {({-#3*0.5},{-0.3*\dy})}] at (#1+#3,#2+#4) {\large\textbf{#5}};
    \node[shift = {({-#3*0.5},{-0.75*\dy})}] at (#1+#3,#2+#4) {\footnotesize #6};
}

\begin{figure}[ht]
\centering
\begin{tikzpicture}
    \def \dx {2.8}
    \def \dy {1.5}

    \myrectangletwo{0}{0}{5*\dx}{5*\dy}{$\mathbf{\Omega\overset{\text{Theorem 2.2 in \cite{DIESTEL2003197}}}{=}\mathcal{D}}$}{Alexandroff duplicate of the cantor space \cite{aurichi2024topologicalremarksendedgeend}}
    
    \myrectangletwo{0.1*\dx}{0.1*\dy}{4.8*\dx}{3.75*\dy}{$\mathbf{\Omega_E\overset{\myref{THM_EdgeEndsHomeoTimidEnds}}{=}\Omega_{\mathrm{t}}\overset{\myref{COR_TimidDirectionsHomeoTimidEnds}}{=}\mathcal{D}_{\mathrm{t}}}$}{\myref{notendspaceofedgegraph}}
    
    \myrectangletwo{0.2*\dx}{0.2*\dy}{4.6*\dx}{2.50*\dy}{$\mathbf{\mathcal{D}_E \overset{\myref{edgedirectionsareendsofedgegraph}}{=} \Omega'}$}{\myref{rmk1}}

    \myrectangle{0.3*\dx}{0.3*\dy}{4.4*\dx}{1.25*\dy}{$\mathbf{\mathcal{D}_E \text{ of connected graphs } \overset{\text{\myref{THM_CompectEdgeEndDirection}}}{=} \text{ compact } \Omega_E \text{ spaces}}$}

\end{tikzpicture}
\caption{A diagram depicting the relationship between the space classes.}
\label{vennfig}
\end{figure}
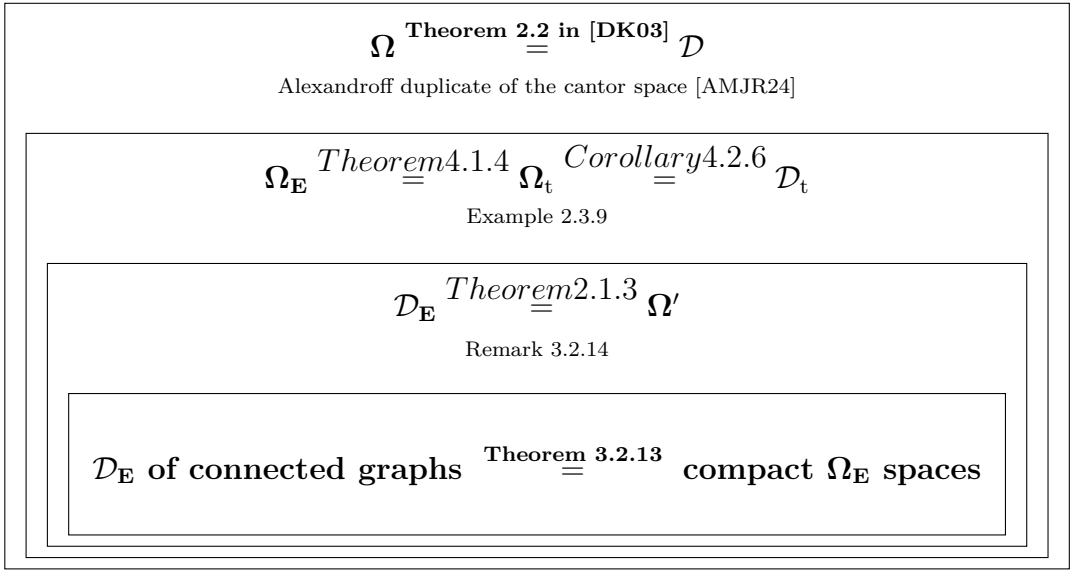

Timid vertices played a key role in \myref{compactnesscharacterization}. Inspired by this, timid vertices and timid connectivity become the center of our attention in \myref{SecTimid-end1}. Let $\Omega_{\mathrm{t}}(G)$ denote the quotient of the set of $G$-rays by the $\mathrm t$-equivalence in which two rays are identified if no finite set of timid vertices can separate tails of said two rays. The elements of $\Omega_{\mathrm{t}}(G)$ shall be called \emph{timid-ends} and the basic open sets $\Omega_{\mathrm t}(F,\varepsilon)$ in $\Omega_{\mathrm{t}}(G)$ are defined just as in $\Omega(G)$, but using finite subsets $F\subset \mathrm{t}(G)$ and timid-ends. 

In locally finite graphs, the end, edge-end and timid-end spaces are all homeomorphic, but this may not be the case for every graph, as illustrated in \myref{fig:enter-label}. However, we show that the edge-end space class coincides with the timid-end space class: 

\begin{theorem*}[\ref{THM_EdgeEndsHomeoTimidEnds}]
    The class $\Omega_{\mathrm{t}}$ of timid-end spaces is the same as the class $\Omega_E$ of edge-end spaces. 
\end{theorem*}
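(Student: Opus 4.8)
The plan is to prove the two class-inclusions $\Omega_E\subseteq\Omega_{\mathrm t}$ and $\Omega_{\mathrm t}\subseteq\Omega_E$ separately, in each case manufacturing from a given graph an auxiliary graph whose end-space of the appropriate flavour is homeomorphic to the prescribed one. The engine for every comparison is the following separation lemma, which I would establish first: \emph{on any graph, a finite set of timid vertices never separates a pair of rays that no finite edge-set can separate}. Indeed, if a finite $T\subseteq\mathrm t(G)$ separates the tails of $r_1$ and $r_2$, then, each $t\in T$ being timid, it edge-dominates neither $r_1$ nor $r_2$, so one may pick finite edge-sets $F_t^1,F_t^2$ separating $t$ from $r_1$ and from $r_2$; a short argument shows that $F=\bigcup_{t\in T}(F_t^1\cup F_t^2)$ already separates $r_1$ from $r_2$ (any surviving path would meet some $t$ and thereby violate one of the $F_t^i$). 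Consequently the edge-equivalence refines the $\mathrm t$-equivalence on every graph, giving a natural continuous surjection $\Omega_E(G)\to\Omega_{\mathrm t}(G)$; the only way the two can fail to coincide is through the \emph{uniform} separating power of timid vertices of infinite degree, a single one of which may split infinitely many ends simultaneously, something no finite edge-set imitates.

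For $\Omega_E\subseteq\Omega_{\mathrm t}$ I would start from the subdivision $G^{*}$ of $G$. Each subdivision vertex has degree $2$, hence is timid, and deleting it reproduces exactly the deletion of the corresponding edge of $G$; combined with the separation lemma this identifies the $\mathrm t$-equivalence of $G^{*}$ with the edge-equivalence of $G$, so $\Omega_{\mathrm t}(G^{*})$ and $\Omega_E(G)$ share the same underlying set of ray-classes and the natural map $\Omega_{\mathrm t}(G^{*})\to\Omega_E(G)$ is a continuous bijection. The topologies agree except for the contribution of any infinite-degree timid vertices that $G$ itself carries (finite-degree ones merely reproduce finite edge-sets). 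The decisive step is therefore a \emph{normal-form reduction}: every edge-end space should be realised as $\Omega_E(G)$ for some graph possessing no infinite-degree timid vertex. I expect to obtain this by rerouting each offending vertex $v$ through a locally finite gadget, exploiting that $v$, being timid, edge-dominates no ray, so the gadget can be chosen to create no new edge-end and to preserve $\Omega_E$; the graph $H_G$ of \cite{aurichi2024topologicalremarksendedgeend} — whose construction already encodes the timid vertices of $G$, which is precisely why \myref{compactnesscharacterization} is phrased through them — is the natural vehicle for, or replacement of, this reduction.

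For the reverse inclusion $\Omega_{\mathrm t}\subseteq\Omega_E$ the task is to make edges cut \emph{only} where timid vertices already cut in $G$. I would build $H$ from $G$ by leaving the timid vertices cuttable while \emph{shielding} every non-timid vertex with an infinite edge-bundle, so that no finite edge-set can separate two rays passing through a non-timid vertex; by the separation lemma this collapses the edge-equivalence of $H$ to exactly the $\mathrm t$-equivalence of $G$. Here the infinite-degree timid vertices of $G$, which were the obstruction above, become an asset: one is free to install in $H$ the infinite-degree timid vertices that realise, as genuine edge-separations, the uniform splittings those vertices produced in $\Omega_{\mathrm t}(G)$. The verification that $\Omega_E(H)\approx\Omega_{\mathrm t}(G)$ then reduces to checking that the shielding bundles create no spurious edge-end and merge precisely the $\mathrm t$-equivalent classes, which I would carry out by comparing the basic open sets $\Omega_E(F,\cdot)$ and $\Omega_{\mathrm t}(F,\cdot)$ through the component assignments $\mathrm S_{H,F}$ and $\mathrm S_{G,F}$.

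The main obstacle, in both directions, is the same and is topological rather than set-theoretic: reconciling the \emph{uniform} separating behaviour of infinite-degree timid vertices with the inevitably \emph{pointwise} behaviour of finite edge-sets. The separation lemma settles the comparison for single pairs of rays and thus pins down the underlying bijections; what remains is to guarantee that no gadget — the rerouting in the first construction, the shielding bundle in the second — introduces an extra end or over-merges two classes. This is exactly where the structure theory behind \myref{compactnesscharacterization} and \myref{raylesscharact}, which describe when an infinite-degree timid vertex can be ``sealed off'' by a finite edge-set, must be brought to bear to control the accumulation of ends around such vertices.
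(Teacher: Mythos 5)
Your separation lemma is correct and is precisely the observation the paper records at the start of its Section~4, but you stop short of its natural strengthening, which already finishes the direction $\Omega_E\subseteq\Omega_{\mathrm t}$ with no further machinery: given a finite $T\subset \mathrm t(G)$ and an edge-end $\xi$ with ray $r'$, put $F=\bigcup_{t\in T}F_t$ where $F_t$ separates $t$ from $r'$; then any edge-end in the same component of $G\setminus F$ as $\xi$ still has its tails in $C(T,\xi)$, since a connecting walk avoiding $F$ that met some $t\in T$ would yield a path from $t$ to the $(G\setminus F_t)$-tail of $r'$ avoiding $F_t$. Hence finite sets of timid vertices --- \emph{including those of infinite degree} --- induce open sets of the edge-end topology, and the subdivision graph alone gives $\Omega_{\mathrm t}(G^*)\approx \Omega_E(G)$, which is exactly how the paper disposes of this inclusion. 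Your ``decisive step'', the normal-form reduction realizing every edge-end space over a graph with no infinite-degree timid vertex, is therefore not only unnecessary but provably impossible, so this part of your plan genuinely fails. Indeed, by \myref{directionrepresentation} the rayless edge-directions of a graph are in bijection with its infinite-degree timid vertices, so for a connected graph without such vertices condition (iv) of \myref{raylesscharact} holds vacuously and the edge-end space is compact; for a disconnected one it is a topological sum of compacta, one per component. But the edge-end space $X$ of \myref{notendspaceofedgegraph} is not such a sum: a clopen $C\ni \varepsilon_0$ either contains infinitely many points of some fan $\set{\varepsilon_n^k:k\in\omega}$, an infinite closed discrete set, and is then non-compact, or it misses cofinitely many points of every fan, in which case every basic neighborhood of $\varepsilon_0$ (which contains all but finitely many fans, each up to finitely many points) meets $X\setminus C$, contradicting clopenness.

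For the inclusion $\Omega_{\mathrm t}\subseteq \Omega_E$ your shielding construction coincides with the paper's graph $H$ (infinitely many parallel length-two paths added along each edge having no timid endpoint), but your verification sketch omits the step on which everything turns. The painless half is well-definedness: a finite $F\subset \mathrm E(H)$ cuts, in effect, only edges incident to timid vertices, and those cuts are dominated by removing the corresponding finitely many timid endpoints, which by hypothesis does not separate $\mathrm t$-equivalent rays. The hard half is injectivity (and openness) of $[r]_{\mathrm t(G)}\mapsto [r]_{\mathrm E(H)}$: when a finite set of timid vertices separates two rays, you must produce a \emph{finite} edge-set of $H$ doing the same, and a timid vertex of infinite degree cannot be handled by deleting its incident edges wholesale. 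What bridges this is the paper's \myref{LEMMA_TimidEdgeSeparation} --- a timid $u$ with $u\nsim_E v$ can already be separated from $v$ by finitely many edges \emph{incident to timid vertices} --- whose proof (a recursion through maximal families of pairwise edge-disjoint paths, closed off by the star--comb lemma to contradict the timidity of $u$) is the technical heart of the whole theorem, together with the quotient Lemma \myref{LEMMA_TimidEqvClassQuot} reducing to the case where $u$ is $\sim_E$-equivalent only to itself. Your proposal asserts the needed conclusion (``one is free to install \dots\ genuine edge-separations'') with no argument in its place, so this direction also has a substantive gap --- though here, unlike the first direction, the paper's route shows the gap is fillable rather than fatal.
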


\begin{figure}
    \centering
    \begin{tikzpicture}
        \def \a {1}
        \draw (0,0) circle (\a);
        \draw (2.5*\a,0) circle (\a);
        \draw (4.5*\a,0) circle (\a);

        \draw (\a,0) -- (1.5*\a,0);

        \draw[fill = white] (\a,0) circle (0.1*\a);
        \draw[fill = white] (1.5*\a,0) circle (0.1*\a);
        \draw[fill = white] (3.5*\a,0) circle (0.1*\a);

        \node at (0,0) {$K_\omega$};
        \node at (2.5*\a,0) {$K_\omega$};
        \node at (4.5*\a,0) {$K_\omega$};
    \end{tikzpicture}
    \caption{For this graph $G$, one has that $\Omega_\mathrm{t}(G)$ has one point, $\Omega_E(G)$ has two and $\Omega(G)$ has three.}
    \label{fig:enter-label}
\end{figure}
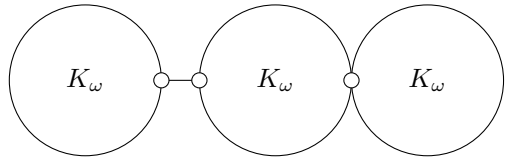

We make an edge-analogue study of \cite{DIESTEL2003197}, establishing relationships between ends and directions in \myref{directionedgerep}, for edge-ends vs edge-directions, and in \myref{timid-direciton-end2}, for timid-ends and timid-directions. This last result raises the natural question of its validity when we exchange the timid vertex set for another one. This motivates us to define the \emph{$U$-end space} ($\Omega_U(G)$) and the \emph{$U$-direction space} ($\mathcal{D}_U(G)$) for a graph $G$ and $U\subset \mathrm{V}(G)$. For a $G$-ray $r$, we denote by $\rho_r$ the $U$-direction which, for every $F\subset U$, chooses the connected component of $G\setminus F$ containing a tail of $r$. In this case, we show that

\begin{theorem*}[\ref{THM_piDSurj}]
    Let $U\subset \mathrm{V}(G)$ be given. Then for every $\rho\in \mathcal{D}_U(G)$ there exists a $G$-ray $r$ such that $\rho_r = \rho$ if, and only if, the set of vertices in $G$ which can be separated from any $G$-ray by finite subsets of $U$, but cannot be isolated into a finite component of $G$ by any finite $F\subset U$,  is contained in $U$.
\end{theorem*}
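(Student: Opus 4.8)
The plan is to prove that $r\mapsto\rho_r$ is surjective exactly when the obstructing vertices all already lie in $U$. Write $B$ for the set described in the statement, so $v\in B$ means: (a) every $G$-ray can be separated from $v$ by some finite subset of $U$, while (b) no finite $F\subseteq U$ isolates $v$ in a finite component of $G\setminus F$. The observation I would record first, and which powers both implications, is that for $v\notin U$ (so that $v\notin F$ for every admissible $F$) condition (b) is exactly what turns the assignment $\rho_v\colon F\mapsto C(F,v)$ — where $C(F,v)$ denotes the component of $G\setminus F$ through $v$ — into a genuine element of $\mathcal D_U(G)$: its values are nonempty and nested, and (b) makes them infinite. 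Dually, $\rho_v$ is of the form $\rho_r$ for a ray $r$ precisely when condition (a) \emph{fails}, i.e.\ when some $G$-ray cannot be separated from $v$ by any finite subset of $U$. The easy direction follows at once: if $B\not\subseteq U$, choose $v\in B\setminus U$; then $\rho_v\in\mathcal D_U(G)$ by (b) but no ray represents it by (a), so the map is not surjective.

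For the converse I would assume $B\subseteq U$, fix an arbitrary $\rho\in\mathcal D_U(G)$, and split on the vertex set $L=\bigcap_{F\in[U]^{<\aleph_0}}\rho(F)$. If $L\neq\emptyset$, pick $w\in L$: were $w\in U$ the singleton $\{w\}$ would be an admissible separator, yet $w\notin\rho(\{w\})$, so $w\notin U$; consequently $\rho(F)=C(F,w)$ for every $F$, that is $\rho=\rho_w$. Being a direction, $\rho_w$ has only infinite values, so (b) holds for $w$; since $w\notin U$ and $B\subseteq U$ we get $w\notin B$, hence (a) fails and some $G$-ray $r$ cannot be $U$-separated from $w$ — this $r$ satisfies $\rho_r=\rho_w=\rho$. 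The whole content of the converse therefore concentrates in the remaining case $L=\emptyset$, which I would isolate as a key lemma: \emph{a $U$-direction with empty intersection is represented by a ray}. This is the precise $U$-analogue of Diestel's correspondence between directions and ends \cite{DIESTEL2003197}; indeed for $U=\mathrm V(G)$ one always has $L=\emptyset$, and the lemma specializes to the statement that every direction is an end.

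My plan for the key lemma has three steps. First, produce an increasing chain $F_0\subseteq F_1\subseteq\cdots$ of finite subsets of $U$ with $\bigcap_n\rho(F_n)=\emptyset$: enumerating the vertices of the component $\rho(\emptyset)$ and, for each, adjoining to the current $F_n$ a finite subset of $U$ that expels it from the direction does this (here is where countability of the host graph enters). Second, build a ray from the chain: choosing $x_n\in\rho(F_n)$ and joining $x_n$ to $x_{n+1}$ by a path inside the connected set $\rho(F_n)$ yields a walk whose tail from $x_n$ onward stays in $\rho(F_n)$; since $\bigcap_n\rho(F_n)=\emptyset$ forces each vertex to be visited only finitely often, the walk meets infinitely many vertices and a last-occurrence (loop-erasing) argument extracts from it a genuine ray $r$ that still has a tail in every $\rho(F_n)$. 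Third, upgrade this to representation of all of $\rho$: given an arbitrary finite $F\subseteq U$, emptiness of $\bigcap_n\rho(F_n)$ gives $\rho(F_n)\cap F=\emptyset$ for large $n$, so the connected set $\rho(F_n)$ sits inside a single component of $G\setminus F$ — namely the component $C_F$ carrying the tail of $r$ — whence $\emptyset\neq\rho(F\cup F_n)\subseteq C_F\cap\rho(F)$ forces $C_F=\rho(F)$, i.e.\ $r$ represents $\rho$ at $F$.

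I expect the key lemma to be the crux, and two points within it to need care. The structural one is the first step: a countable chain with empty intersection exists immediately once $\rho(\emptyset)$ is countable, but for uncountable graphs a downward-directed family of nested infinite components can have empty total intersection while every countable subfamily still meets (as for the cofinite filter on $\aleph_1$), so the argument genuinely rests on countability there. The more delicate deductive step is the third one: it is what converts agreement of $r$ with $\rho$ along a single chain into agreement with the whole of $\rho$, and it is precisely this $F\cup F_n$ identification that lets a mere chain — rather than an (impossible) cofinal family in $[U]^{<\aleph_0}$ — suffice. Once both are in place, the two cases $L\neq\emptyset$ and $L=\emptyset$ exhaust all of $\mathcal D_U(G)$ and yield surjectivity, completing the equivalence.
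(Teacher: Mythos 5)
Your easy direction and your case $L\neq\emptyset$ are correct and agree in substance with the paper's proof (which likewise starts the necessity argument with $\rho_v$ for a $U$-dense, $U$-timid $v\notin U$, and handles vertices lying in all chosen components via failure of $U$-timidity). The gap is exactly where you flag it, and it is fatal as written: the theorem is stated for arbitrary graphs --- the paper itself works elsewhere with graphs of size $\aleph_1$, as in its example of a non-metrizable edge-direction space --- and Step 1 of your key lemma genuinely requires $\rho(\emptyset)$ to be countable. Concretely, take $G=K_{\aleph_1}$ and $U=\mathrm{V}(G)$: there is a unique $U$-direction $\rho$, with $\rho(F)=G\setminus F$, so $L=\bigcap_F\rho(F)=\emptyset$; yet for every countable chain $F_0\subseteq F_1\subseteq\cdots$ of finite subsets of $U$ one has $\bigcap_n\rho(F_n)=\mathrm{V}(G)\setminus\bigcup_n F_n\neq\emptyset$. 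So here your lemma's hypothesis holds and its conclusion is true (any ray represents $\rho$), but your proof of it cannot start: no chain as in Step 1 exists, and without it the walk of Step 2 need not escape to infinity. The dichotomy $L\neq\emptyset$ versus $L=\emptyset$ is therefore too coarse: emptiness of $L$ does not produce a countable witness.

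The paper circumvents this by intersecting closures rather than components: it sets $\hat F=\mathrm{V}(\rho(F))\cup\mathrm{N}(F,\rho(F))$ and $U^\rho=\bigcap_F\hat F$, and splits three ways instead of two. If $U^\rho\setminus U\neq\emptyset$, such a vertex is $U$-dense, hence (by the hypothesis $\partial\mathrm{t}_U\subset U$) not $U$-timid, and a ray that no finite subset of $U$ separates from it represents $\rho$, exactly as in your $L\neq\emptyset$ case. If $U^\rho$ is infinite --- which is what happens in $K_{\aleph_1}$, where $U^\rho=\mathrm{V}(G)$ even though $L=\emptyset$ --- a ray is threaded directly through infinitely many vertices of $U^\rho$. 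Only when $U^\rho$ is finite does the paper reduce (again using $\partial\mathrm{t}_U\subset U$, applied inside $\rho(U^\rho)$) to the case $U^\rho=\emptyset$, and there it builds a sequence of pairwise \emph{disjoint} finite sets $F_n\subset U$ with $F_{n+1}\cup\rho(F_{n+1})\subset\rho(F_n)$: each step need only expel the finitely many vertices of $F_n$, so no enumeration of the vertex set is involved, and $\bigcap_n\hat F_n=\emptyset$ comes for free from a distance argument (a vertex of $\rho(F_{n+1})$ has distance at least $n$ from $F_0$). The ray is then extracted by the star--comb lemma (\myref{starcomb}) applied to chosen points $v_n\in\rho(F_n)$ --- a star is impossible since its center would lie in $\bigcap_n\hat F_n$ --- and the comb's spine is shown to represent $\rho$ by precisely the identification $\emptyset\neq\rho(F\cup F_n)\subset\rho(F)\cap\rho(F_n)$ that you isolate in your Step 3. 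Replacing your Step 1 and case split by this $\hat F$/$U^\rho$ analysis (which is also how Diestel's correspondence handles uncountable graphs) makes the rest of your outline go through.
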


\begin{corollary*}[\ref{COR_TimidDirectionsHomeoTimidEnds}]
    For every graph $G$, $\Omega_{\mathrm{t}}(G)\approx  \mathcal{D}_{\mathrm{t}}(G)$.
\end{corollary*}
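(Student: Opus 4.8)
The plan is to realize the homeomorphism as the natural map $\pi\colon \Omega_{\mathrm t}(G)\to\mathcal D_{\mathrm t}(G)$ sending a timid-end $[r]$ to the timid-direction $\rho_r$, where $\mathcal D_{\mathrm t}(G)=\mathcal D_U(G)$ for $U=\mathrm t(G)$. Both the well-definedness and the injectivity of $\pi$ are immediate from the definitions: two rays $r,s$ are $\mathrm t$-equivalent precisely when no finite set of timid vertices separates their tails, that is, when $\rho_r(F)=\rho_s(F)$ for every finite $F\subseteq \mathrm t(G)$, which is exactly the assertion $\rho_r=\rho_s$. Thus $[r]\mapsto\rho_r$ is simultaneously well-defined and injective with no further work.

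The substance of the proof is surjectivity, and here I would invoke \myref{THM_piDSurj} with $U=\mathrm t(G)$: it suffices to verify that the set $S$ of vertices which can be separated from every $G$-ray by finite subsets of $\mathrm t(G)$, yet cannot be isolated into a finite component by any finite $F\subseteq\mathrm t(G)$, is contained in $\mathrm t(G)$. I expect this verification to be the main obstacle. The cleanest route is to prove the stronger statement that any vertex $v$ satisfying the first condition alone is already timid, which gives $S\subseteq\mathrm t(G)$ a fortiori. Argue by contraposition: if $v$ is not timid, then $v$ edge-dominates some ray $r$, so by the edge-version of Menger's theorem there are infinitely many edge-disjoint paths from $v$ to $r$. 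By hypothesis some finite $F\subseteq\mathrm t(G)$ separates $v$ from a tail of $r$, so each of these paths must meet $F$; since $F$ is finite while the paths are infinite in number, some $w\in F$ lies on infinitely many of them. The portions of these paths running from $w$ to the tail of $r$ are again edge-disjoint, so $w$ edge-dominates $r$, contradicting $w\in\mathrm t(G)$. Hence every $v\in S$ dominates no ray, i.e.\ $v\in\mathrm t(G)$, and \myref{THM_piDSurj} then yields surjectivity of $\pi$.

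Finally I would show that $\pi$ is a homeomorphism by matching basic open sets. The topology on $\mathcal D_{\mathrm t}(G)$ is the inverse-limit topology, with basic open sets $\set{\rho \st \rho(F)=C}$ for a finite $F\subseteq\mathrm t(G)$ and a connected component $C$ of $G\setminus F$; its $\pi$-preimage is exactly $\set{[r] \st \text{a tail of } r \text{ lies in } C}=\Omega_{\mathrm t}(F,\varepsilon)$ for the timid-end $\varepsilon$ with $C(F,\varepsilon)=C$ (and is empty if no such timid-end exists), which is a basic open set of $\Omega_{\mathrm t}(G)$. Since $\pi$ is a bijection carrying the declared basis of $\Omega_{\mathrm t}(G)$ onto the declared basis of $\mathcal D_{\mathrm t}(G)$, it is continuous and open, hence a homeomorphism, completing the proof.
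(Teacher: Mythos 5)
Your proposal is correct and structurally identical to the paper's proof: the paper also obtains this corollary by applying \myref{COR_rhoUHomeo} (whose embedding half is \myref{PROP_UTopEmbed} and whose surjectivity half is \myref{THM_piDSurj}) to $U=\mathrm{t}(G)$, after checking $\partial \mathrm{t}_U\subset U$. The interesting difference is at the step you rightly identified as the crux. The paper disposes of it in one line by asserting that a $U$-timid vertex is timid for \emph{every} $U\subset \mathrm{V}(G)$; you instead prove the inclusion only for $U=\mathrm{t}(G)$, via Menger plus pigeonhole: some $w\in F\subseteq \mathrm{t}(G)$ lies on infinitely many of the edge-disjoint $v$--$r$ paths, its subpaths to the tail are again edge-disjoint, so $w$ edge-dominates $r$, contradicting $w$'s timidity. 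Your restricted version is in fact the correct one: the paper's blanket claim fails for general $U$ (take a ray $r=v_0v_1v_2\dotsc$, a vertex $w$ adjacent to every $v_j$, a vertex $v$ joined to $w$ by infinitely many internally disjoint paths $v\,u_i\,w$, and $U=\{w\}$; then $v$ is $U$-timid but edge-dominates $r$), and it is exactly the hypothesis $F\subseteq\mathrm{t}(G)$ that your pigeonhole step exploits --- so your write-up supplies the justification the paper leaves implicit. One small refinement: edge-domination gives edge-disjoint paths from $v$ to $r$, while the separation is only from the $(G\setminus F)$-tail of $r$, so paths ending on an early segment of $r$ need not meet $F$; you should first arrange, by the standard greedy extraction against the finite edge set of already-used edges together with initial edges of $r$, that infinitely many of the paths end in that tail. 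The remaining parts of your argument --- well-definedness, injectivity, and the matching of the two declared bases (noting only that the components in the definition of $\mathcal{D}_{\mathrm{t}}(G)$ are the \emph{infinite} ones) --- coincide with \myref{PROP_UTopEmbed} and the surjectivity-implies-homeomorphism step in the paper.
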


We finish this section with some standard graphical notions which will be of use throughout this paper. A \emph{comb} (of $G$) is a pair $(r,\{d_i\}_{i \in \omega})$ and a collection of pairwise disjoint paths $\{P_i\}_{i \in \omega}$ where $r$ is a $G$-ray, the comb's \emph{spine}, and $d_i$ the combs \emph{teeth}. An star centered in $c$ and with tips $T$ is a collection of paths from $c$ to $t \in T$ such that for any pair of paths $P_1\cap P_2 = \{c\}$. The star is \emph{infinite} if $T$ is infinite. In this case, the following can be found in \cite{diestelgraphtheory}:

\begin{lemma}[Star-comb lemma]
\label{starcomb}
    Let $ G $ be connected and an infinite $ D \subset \mathrm{V}(G) $. If there is no comb with infinitely many teeth in $D$ then there is a star with infinitely many tips in $ D $.
\end{lemma}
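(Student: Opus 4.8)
The plan is to reduce the statement to one about trees and then analyse an infinite tree through a simple degree dichotomy. First I would fix a spanning tree $T$ of $G$, which exists since $G$ is connected; any comb or star found inside $T$ is also one inside $G$ with its teeth, respectively tips, in the same set $D$, so it suffices to work inside $T$. Next I would pass to the minimal subtree $T'\subseteq T$ containing $D$, that is, the union of all $u$--$v$ paths of $T$ with $u,v\in D$. Since $D$ is infinite, $T'$ is infinite, and the defining property supplies the one fact I will use repeatedly: for every edge $e$ of $T'$, \emph{both} sides of $e$ meet $D$, because a path of $T'$ witnessing a given vertex has its two endpoints in $D$ and cannot cross $e$ twice. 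In particular, every component of $T'-v$ contains a vertex of $D$.

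The key step is then a dichotomy on $T'$: either some vertex has infinite degree, or $T'$ is locally finite. In the first case I obtain the \emph{star}: if $v$ has infinitely many neighbours in $T'$, then each of the infinitely many components of $T'-v$ contains a vertex of $D$ by the branch property just noted, and the paths from $v$ to one chosen $D$-vertex per component are pairwise internally disjoint, yielding a subdivided infinite star centred at $v$ with all tips in $D$. In the second (locally finite) case I would build a \emph{comb}. Rooting $T'$ at any vertex, König's lemma applied to this locally finite infinite tree produces a ray, and I would construct the spine greedily so that at each step I descend into a child whose branch still contains infinitely many vertices of $D$; this is possible because a vertex of finite degree splits an infinite-$D$ subtree into finitely many branches, one of which must again contain infinitely many $D$-vertices, so the invariant ``infinitely many $D$-vertices lie below $s_i$'' is maintained.

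The main obstacle is producing the teeth, since a priori all of $D$ could accumulate along the spine or pile up behind a single spine vertex, leaving no pairwise disjoint side-paths to $D$. I would resolve this as follows. If infinitely many spine vertices lie in $D$, the spine is already a comb with trivial teeth and we are done; otherwise I claim that infinitely many spine vertices $s_i$ admit a $D$-vertex in a non-spine child branch, and then choosing one such path per index $i$ gives pairwise disjoint teeth, since distinct attachment points on the spine force disjointness in a tree and each such path meets the spine only at its attachment vertex. The claim is where the finite-depth argument does the work: were it false, then beyond some index every $D$-vertex below $s_i$ would be forced below $s_{i+1}$, hence below $s_j$ for all $j\ge i$; but a vertex of finite depth cannot lie below $s_j$ for arbitrarily large $j$, contradicting the invariant that infinitely many $D$-vertices lie below each $s_i$. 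Assembling the chosen spine with these teeth gives the comb, completing the dichotomy. Since a comb with infinitely many teeth in $D$ is exactly the excluded alternative, its absence forces the star, which is the assertion of the lemma.
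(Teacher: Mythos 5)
Your proposal is correct. Note that the paper does not actually prove this lemma---it is quoted as a known result from \cite{diestelgraphtheory}---and your argument (reduce to the minimal subtree of a spanning tree spanned by $D$, observe that every branch at every vertex meets $D$, then split on whether that subtree has a vertex of infinite degree, yielding the subdivided star, or is locally finite, yielding the spine by a greedy descent that keeps infinitely many $D$-vertices below each spine vertex and the pairwise disjoint teeth by the finite-depth argument) is essentially the standard proof given in that reference, so there is nothing to fix.
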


    \section{Representations of edge-direction spaces}
    
\paragraph{}
In \cite{DIESTEL2003197} a correspondence $\varepsilon \in \Omega(G) \leftrightarrow \rho_\varepsilon \in \mathcal{D}(G)$ is constructed for any graph $G$. In the following, we attempt to make a similar study about the edge-end case. Let $G$ be any graph, consider the system \[ F_G \doteq \left( \{\mathrm{S}_{G,F}\}_{F \in [\mathrm{V}(G)]^{<\aleph_0}},\{\phi_{F_1,F_2}\}_{F_1 \subset F_2 \in [\mathrm{V}(G)]^{<\aleph_0}}, [\mathrm{V}(G)]^{<\aleph_0}\right)\] given by the following associations. 

\begin{itemize}
    \item We define $\mathrm{S}_{G,F}$ to be the discrete space whose points are sets of vertices of the connected components of $G \backslash F$. 
    \item Given $F_1 \subset F_2$ we define a map $ \phi_{F_1,F_2}^G: \mathrm{S}_{G,F_2} \to \mathrm{S}_{G,F_1} $ that maps a component $C \in \mathrm{S}_{G,F_2}$ into the only component $\phi_{F_1,F_2}^G(C) \in S_{G,F_1}$ that contains it.
\end{itemize}  

It is straight forward to check that elements of $\varprojlim \mathrm{S}_{G,\bullet}$ are in bijection with graph-theoretic \emph{directions} of $G$. One can check that basic open sets in the usual topology $\Omega(G)$ are in correspondence with basic open sets of $\varprojlim F_G$, giving us a description of $\Omega(G)$ as an inverse limit. There is no loss of generalization to use \[\tilde{\mathrm{S}}_{G,F} \doteq \{C \in \mathrm{S}_{G,F} \st C \text{ has at least a ray}\}.\] 

If $C$ is rayless, the neighborhood defined by $\pi_{F}^{-1}(C)$ is empty. Furthermore, if there is an $F$ with infinite $\tilde{S}_{G,F}$ then the covering $\{\pi_{F}^{-1}(C)\}_{C \in \tilde{S}_{G,F}}$ has no finite sub-cover. It is a known fact that the inverse limit of Hausdorff compact spaces is compact (\cite{eng}).

\subsection{Edge-direction spaces as end spaces}
\label{edgedirection}

\newcommand{\im}{\mathrm{im}}

We begin by defining the following direct analogue of graph theoretical directions for edge removal. We also recall the definition of edge-domination, and fix some notions we shall use from here on. Recall (following \cite{eng}) that an \emph{inverse system} is a triple \[F \doteq (\{X_p\}_{p \in \mathbb{P}},\{\phi_{p_1p_2}:X_{p_2}\to X_{p_1}\}_{p_1 \leq p_2}, \mathbb{P})\] where $\mathbb{P}$ is an order, $X_p$ is a topological space for each $p \in \mathbb{P}$ and $\phi_{p_1p_2}$ is a continuous map, called \emph{transition} or \emph{attaching} maps. We ask $\phi_{pp}= \id_{X_p}$, that $\phi_{p_1p_2} \circ \phi_{p_2p_3} = \phi_{p_1p_3}$ and that $\mathbb{P}$ is \emph{upward directed order}, meaning that for any pair $p_1,p_2 \in \mathbb{P}$ there is $p_3 \in \mathbb{P}$ such that $p_3 \geq p_1,p_2$. Define the \emph{limit} of this system as
\[ \varprojlim F = 
    \left\{
    (x_p)_{p \in \mathbb{P}} \in \prod_{p \in \mathbb{P}}X_p \st 
    \forall p_1 \leq p_2 
        (\phi_{p_1p_2}(x_{p_2}) = x_{p_1}) 
    \right\}
\]
taken with the subspace topology from the product. Note that the canonical projections $\pi_p:\varprojlim F \to F(p)$ provide us a natural sub-basis of the inverse limit, given as $\{\pi_p^{-1}(A)\}_{p \in \mathbb{P},\, A \in \tau_{F(p)}}$. 

\begin{definition}[Edge directions]
    Let $G$ be a graph, define $\mathrm{S}^E_{G,F}$ to be the discrete space of the infinite connected components of $G\backslash F$ for $F \in [\mathrm{E}(G)]^{<\aleph_0}$ and transition maps $\varepsilon^G_{F_1,F_2}$ defined in the same way as for the vertex construction. The inverse limit of the system $(\mathrm{S}^E_{G,F})$ will give us the topological space of \emph{edge-directions} of $G$, denoted by $\mathcal{D}_E(G)$. This topological space is an \emph{edge-direction space}.
\end{definition}

It is easy to see that $\set{\mathcal{D}_E(F,\rho):F\in [\mathrm{E}(G)]^{\aleph_0}, \, \rho\in \mathcal{D}_E(G)}$ is a basis for $\mathcal{D}_E(G)$, with
\[\mathcal{D}_E(F,\rho) \doteq \set{\rho'\in \mathcal{D}_E(G)\st \rho'(F) = \rho(F)}.\]

We note that, although it need not to be specified in the definition of $\mathcal{D}(G)$ that the maps choose from \emph{infinite} components (for this follows as an immediate consequence), this is not the case for $\mathcal{D}_E(G)$. Thus, we have chosen to make such specification to maintain the intuition of directions being \emph{points at infinity} for edge-directions.

As in the vertex-centered case (\cite{DIESTEL2003197}) each edge-end $\varepsilon = [r] \in \Omega_E(G)$ \emph{represent a direction} $ \rho_\varepsilon \in \mathcal{D}_E(G)$, where $\rho_\varepsilon(F) \in  \mathrm{S}^E_{G,F}$ is the component with an $r$-tail, since two rays $r_1$ and $r_2$ define the same direction $\rho_{[r_1]} = \rho_{[r_2]}$ if and only if they are edge-equivalent. We do \emph{not} have, as it is the case for the vertex construction, that $\Omega_E(G) \approx \mathcal{D}_E(G)$ in general. A simple infinite star is the counter-example: this is a rayless graph in which an edge-direction exists. This different behavior motivates the following definition.

\begin{definition}\label{DEF_CanonicalEmbed}
    Let $G$ be any graph. The \emph{canonical embedding} $\iota_G:\Omega_E(G)\hookrightarrow \mathcal{D}_E(G)$, $\iota(\varepsilon) \doteq \rho_\varepsilon$ is well defined. In order to see that it is continuous, note that if $\rho_\varepsilon \in \pi_F^{-1}(C)$ is any basic open set containing an edge-direction in the image, then $\iota_G(\Omega(\varepsilon,F)) \subset \pi_F^{-1}(F)$. A direction $\rho \in \mathcal{D}_E(G)\backslash \iota_G(\Omega_E(G))$ is called a \emph{rayless direction}. 
    
    A vertex $v$ with infinite degree also \emph{represents a direction} defined by $\rho_v(F) \doteq C \in \mathrm{S}^E_{G,F}$ being the component with $v \in C$. Notice $\rho_v = \rho_\varepsilon$ exactly when $v$ \emph{edge-dominates} a ray that represents $\varepsilon$. We say that a vertex is \emph{timid} if it does not edge-dominate any ray in $G$. Let $\mathrm{t}(G)$ be the collection of timid vertices from $G$. 
\end{definition}

The following gives a representation for edge-direction spaces as end spaces. 

\begin{theorem}[Edge-directions are ends of the line-graph]
\label{edgedirectionsareendsofedgegraph}
    For any graph $G$ the homeomorphism relation $\mathcal{D}_E(G) \approx \Omega(G')$ holds.
\end{theorem}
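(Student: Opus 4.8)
The plan is to exhibit an isomorphism of inverse systems, since both $\mathcal{D}_E(G)$ and the direction space $\mathcal{D}(G')$ of the line graph are, by construction, inverse limits of discrete spaces indexed by finite separators. The bijection $e \mapsto v_e$ between $\mathrm{E}(G)$ and $\mathrm{V}(G')$ induces an order-isomorphism $[\mathrm{E}(G)]^{<\aleph_0} \to [\mathrm{V}(G')]^{<\aleph_0}$, $F \mapsto F' \doteq \{v_e \st e \in F\}$, so the two index posets coincide. It then suffices to match the fibres over each separator compatibly with the transition maps and to appeal to the inverse-limit description of the end space recorded earlier in the text, which gives $\Omega(G') \approx \mathcal{D}(G')$.

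The crux is a connectivity dictionary, which I would isolate as a lemma: for edges $e_1, e_2 \in \mathrm{E}(G) \setminus F$, the endpoints of $e_1$ and $e_2$ lie in a common connected component of $G \setminus F$ if and only if $v_{e_1}$ and $v_{e_2}$ lie in a common connected component of $G' \setminus F'$. Both implications unwind to the same combinatorial object: a path in $G' \setminus F'$ from $v_{e_1}$ to $v_{e_2}$ is precisely a sequence of edges of $G$, none in $F$, in which consecutive edges share an endpoint, that is, a walk in $G \setminus F$ joining $e_1$ to $e_2$; translating such a walk back and forth gives both directions.

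Using this dictionary I would define, for each finite $F$, a map $\Psi_F$ sending an infinite component $C \in \mathrm{S}^E_{G,F}$ to the component of $G' \setminus F'$ containing $\{v_e \st e \in \mathrm{E}(C)\}$ (well defined since $C$, being connected and infinite in a simple graph, has infinitely many edges, all of which land in one component of $G' \setminus F'$ by the dictionary). Its inverse sends an infinite component $D$ of $G' \setminus F'$, with vertex set $\{v_e \st e \in E_D\}$, to the component of $G \setminus F$ spanned by the endpoints of $E_D$; infinitude is preserved in both directions because a connected simple graph has infinitely many edges if and only if it has infinitely many vertices. The dictionary shows these two assignments are mutually inverse, so $\Psi_F$ is a bijection of discrete spaces, hence a homeomorphism. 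Compatibility with the transition maps is immediate: if $F_1 \subset F_2$ and $C_2 \subset C_1$ are the nested components, then $\mathrm{E}(C_2) \subset \mathrm{E}(C_1)$ forces $\Psi_{F_2}(C_2) \subset \Psi_{F_1}(C_1)$ at the level of $G'$-vertices, which is exactly the statement that the $G'$-transition map carries $\Psi_{F_2}(C_2)$ to $\Psi_{F_1}(C_1)$.

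Having matched the systems — noting that directions on either side automatically select infinite components, so the passage to the infinite-component subsystems is harmless — the family $\{\Psi_F\}$ is an isomorphism of inverse systems over a common index poset, and therefore induces a homeomorphism $\mathcal{D}_E(G) \approx \mathcal{D}(G')$ of inverse limits. Composing with the inverse-limit description of the end space yields $\mathcal{D}_E(G) \approx \mathcal{D}(G') \approx \Omega(G')$, and since the line-graph construction is uniform in $G$, the displayed equality of classes follows. I expect the only genuinely delicate point to be the verification of the connectivity dictionary, in particular the bookkeeping that guarantees no edge outside $\mathrm{E}(C)$ sneaks into $\Psi_F(C)$ and that finiteness and infinitude transport correctly across the edge–vertex correspondence; everything downstream is the routine functoriality of inverse limits.
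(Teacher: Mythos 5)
Your proposal is correct and takes essentially the same route as the paper's proof: both use the order-isomorphism $F \mapsto F' = \{v_e : e \in F\}$ of index posets, the walk-translation dictionary between components of $G\setminus F$ and of $G'\setminus F'$ (yielding the correspondence $C \leftrightarrow C' = \{v_e : e \in \mathrm{E}(C)\}$), commutativity with the transition maps, and the fact that isomorphic inverse systems have homeomorphic limits (Proposition 2.5.10 in Engelking) combined with the inverse-limit description of the end space. Your additional bookkeeping --- that infinitude transports across the edge--vertex correspondence and that no edge outside $\mathrm{E}(C)$ enters $\Psi_F(C)$ --- is precisely the verification the paper leaves implicit, so nothing is missing.
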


\if \proofshow 1
\begin{proof} 
    Notice the following correspondence holds: \[F \in [\mathrm{E}(G)]^{<\aleph_0} \leftrightarrow F'\doteq \{v_e \st e \in F\} \in [\mathrm{V}(G')]^{<\aleph_0}.\] 
    
    We claim that the systems $\mathrm{S}^E_{G,F} $ and $ \mathrm{S}_{G',\{v_e\}_{e \in F}}$ are equivalent. \\
    
    Indeed, let $C$ be a connected component of $G \backslash F$ where $F$ is a finite set of edges. Take two edges $e_1,e_2$ in $C$, between the vertices $v_1 \in e_1$ and $v_2 \in e_2$. There shall be a $G$-path $P$ from $v_1$ to $v_2$ if and only if there is a $G'$-path between $v_{e_1}$ and $v_{e_2}$. This gives rise to the correspondence \[ C \in \mathrm{S}_{G,F}^{E} \leftrightarrow C' \doteq \{v_e \st e \in \mathrm{E}(G[C])\}\in \mathrm{S}_{G',F'}.\] 
    
    Furthermore the inclusion of finite sets of edges $F_1 \subset F_2 $ happens if and only if $F_1' \subset F_2'$ and the transition maps satisfy \[\varepsilon^G_{F_1,F_2}(C_2) = C_1 \iff \phi^{G'}_{F_1',F_2'}(C_2') = C_1'.\] 
    
    This proves the correspondence $\bullet': C \in \mathrm{S}_{G,F}^{E} \leftrightarrow C' \in \mathrm{S}_{G',F'}$ makes the following diagram commute:
\[
\begin{CD}
\mathrm{S}_{G,F_2}^E  		@>\bullet'>> 	\mathrm{S}_{G',F_2'}   		\\
@V \varepsilon^G_{F_1,F_2} VV 			@VV \phi^{G'}_{F_1,F_2} V  	\\
\mathrm{S}^E_{G,F_1}  		@>\bullet'>> \mathrm{S}_{G',F_1'}  \\
\end{CD}
\]
and these systems should, by 2.5.10 of \cite{eng}, produce the same inverse limit $\mathcal{D}_E(G) \approx \Omega(G')$.
\end{proof}
\fi

This means that the class of edge-direction spaces of graphs is the same as the class of end spaces of line-graphs. This also means the canonical inclusion can be seen as $\iota_G:\Omega_E(G) \hookrightarrow \Omega(G')$. We shall use both disguises $\rho \in \mathcal{D}_E(G) \leftrightarrow \varepsilon \in \Omega(G')$ in our future constructions, depending on what is more convenient. The following example shows it is not always metrizable:

\begin{example}[A non-metrizable edge-direction space]
    \label{nonmetdirection}
    Let $G$ have vertex set $\aleph_1 \times \omega$. Add just enough edges to make $\aleph_1 \times \{0\}$ into a complete sub-graph and $\langle (\gamma,0),(\gamma,1),(\gamma,2),\dots\rangle$ into a ray for each $\gamma \in \aleph_1$. Denote by $e_{\gamma,n} \doteq \{(\gamma,n),(\gamma,n+1)\}$ one of these edges. The end space of this graph is the one point compactification of the discrete space with $\aleph_1$ points $\aleph_1 \cup \{\infty\}$. The $\infty$ point in this description is associated with the end $\aleph_1 \times \{0\}$. This is a non-metrizable space because $\infty$ does not assume a countable local base. Now consider the following partition of $\mathrm{E}(G)$: take the set $K \subset \mathrm{E}(G)$ consisting of all the edges between vertices of $\aleph_1 \times \{0\}$, and singletons $\{e_{\gamma,n}\}$ for each edge of the $\gamma$-ray. This partitions $\mathrm{E}(G)$ into complete sub-graphs in a way that every vertex $v$ is in at most two elements of this partition. By Theorem 8.2 of \cite{harary}, this is a line-graph.
\end{example}

\subsection{Direction representation}
\label{directionrepsection}

\paragraph{}
Although useful in the topological context, directions are difficult to manipulate combinatorially because they lack the 'ray representative' that is frequently used in the end-theoretic sense. The line-graph end characterization should be useful in these situations. Within this brief section, we use the line-graph end representation of the edge-directions to describe them in an edge-analogue of \cite{DIESTEL2003197}'s bijection result.

\begin{definition}
    Let $G$ be a graph and $U \subset \mathrm{V}(G')$ be a collection of vertices. Let $E = \{e \in \mathrm{E}(G) \st v_e \in U\}$ and define $U^* \doteq G[E]$. 
\end{definition}

If $e' \doteq \{v_{e}, v_f\} \in \mathrm{E}(G')$ then the (possibly) three end-points of $\{v_e,v_f\}^*$ is connected. It is not difficult to use this to prove that if $G'[V]$ is connected, then $V^*$ is connected. In particular, if $r$ is a $G'$ ray, $r^* \leq G$ is connected. In what follows, we fix a canonical way of mapping a $G$-ray into a $G'$ ray. Let $r = \langle v_1,v_2,v_3,\dots \rangle $ be a $G$-ray, observe $\{v_i,v_{i+1}\} \doteq e_i$ is an edge so $\overline{r} \doteq \langle v_{e_1},v_{e_2},\dots\rangle$ is the $G'$-ray it gives rise to. A similar construction can be done with infinitely many edges incident on a single vertex $v \in G$.


\begin{corollary}[{of \myref{edgedirectionsareendsofedgegraph}}]
\label{directionrepresentation}
    Let $\rho \in \mathcal{D}_E(G)$ be any direction, then the following is complementary:
    \begin{itemize}
        \item[(i)] There is a ray $r$ such that, for any finite edge-set $F$, $\rho(F)$ is the component of $G \backslash F$ that has an $r$-tail.
        \item[(ii)] There is a timid vertex of infinite degree $v$ such that, for any finite edge-set $F$, $\rho(F)$ is the component $G \backslash F$ containing $v$.
    \end{itemize}
\end{corollary}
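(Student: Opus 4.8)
The plan is to transport the whole question to the line-graph via \myref{edgedirectionsareendsofedgegraph} and then exploit the fact that, for the vertex construction, ends and directions coincide (Theorem 2.2 of \cite{DIESTEL2003197}), so that every end of $G'$ is witnessed by a $G'$-ray. Writing $\varepsilon \in \Omega(G')$ for the end corresponding to $\rho$, I fix a $G'$-ray $s = \langle v_{e_1}, v_{e_2}, \dots \rangle$ representing $\varepsilon$; reading the adjacencies of $s$ back in $G$ gives a sequence of pairwise distinct edges $e_1, e_2, \dots$ with $e_i \cap e_{i+1} \neq \emptyset$, whose union spans the connected subgraph $s^* \le G$ (connected by the remark preceding the corollary). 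Since infinitely many distinct edges cannot be supported on finitely many vertices, $\mathrm{V}(s^*) = \bigcup_i e_i$ is infinite, and I apply the star--comb lemma (\myref{starcomb}) to the connected graph $s^*$ and the infinite set $\mathrm{V}(s^*)$.

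First I dispatch mutual exclusivity, the easy half: if a ray $r$ and a timid vertex $v$ of infinite degree both represented $\rho$, then $\rho_v = \rho_{[r]}$, and by \myref{DEF_CanonicalEmbed} this forces $v$ to edge-dominate a ray, contradicting the timidity of $v$. Hence (i) and (ii) cannot hold simultaneously, so establishing that one of them always holds will give exactly one --- the meaning of ``complementary.''

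For existence, the star--comb dichotomy splits into two cases. If $s^*$ yields a comb, its spine is a $G$-ray $r$, and I claim $r$ represents $\rho$, i.e. $\overline r \sim s$ in $G'$. If instead $s^*$ yields a star with centre $c$, then $c$ is incident to the infinitely many distinct first edges of the internally disjoint star-paths, so $\deg_G(c) = \infty$, and I claim $c$ represents $\rho$, i.e. $\rho_c = \rho$. In this second case, if $c$ happens \emph{not} to be timid it edge-dominates some ray $r'$, and then $\rho = \rho_c = \rho_{[r']}$ places us in case (i); otherwise $c$ is timid and we are in case (ii). Either way at least one alternative holds.

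The crux --- and the step I expect to be the main obstacle --- is verifying the two representation claims, since directions are defined component-by-component over \emph{all} finite edge-sets. Fix a finite $F \subset \mathrm{E}(G)$ with image $F' = \{v_f \st f \in F\} \subset \mathrm{V}(G')$. The point is that $F$, being finite, is edge-disjoint from all but finitely many of the pairwise (internally) disjoint tooth-paths (resp. star-paths), so infinitely many of them survive in $G \setminus F$ and connect the surviving spine-tail of $r$ (resp. the centre $c$) to tip vertices $d_i$; each $d_i$ lies on some edge $e_{m(i)}$ of $s$, and since infinitely many distinct tips cannot all sit on the finitely many endpoints of a bounded initial segment of $s$, the indices $m(i)$ are unbounded. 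Translating through the component correspondence $C \leftrightarrow C'$ of \myref{edgedirectionsareendsofedgegraph}, each surviving connection places $v_{e_{m(i)}}$ --- a vertex on the tail of $s$, hence in the component of $G' \setminus F'$ belonging to $\varepsilon$ --- in the same $G' \setminus F'$ component as $\overline r$ (resp. as the edges at $c$). As $F$ was arbitrary, the components match for every finite separator, yielding $\overline r \sim s$ (resp. $\rho_c = \rho$). The care needed is precisely to ensure the surviving teeth reach arbitrarily late edges of $s$, so that their line-graph images sit on the tail of $s$ beyond $F'$, and to check that infinitely many of the survivors attach to the single surviving ray-tail of the spine.
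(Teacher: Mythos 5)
Your proposal is correct and takes essentially the same route as the paper: transfer $\rho$ to an end $[s]\in\Omega(G')$ via \myref{edgedirectionsareendsofedgegraph}, apply the star--comb lemma (\myref{starcomb}) inside the connected subgraph $s^*$ (ray/spine gives case (i), star centre gives case (ii)), and settle both the timidity of the centre and mutual exclusivity through \myref{DEF_CanonicalEmbed}, exactly as the paper's closing observation does. The only difference is that your component-by-component verification via surviving tooth-paths is more work than necessary: every edge of the spine (resp.\ every star edge at $c$) is itself an edge of $s^*$, so its line-graph vertex already lies \emph{on} $s$; hence $\overline{r}$ shares infinitely many vertices with $s$ and the end-equivalence is immediate, which is the paper's one-line argument.
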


\if \proofshow 1


\begin{proof}
    From \myref{edgedirectionsareendsofedgegraph} every direction $\rho \in \mathcal{D}_E(G)$ corresponds to an end $\varepsilon = [r'] \in \Omega(G')$. 
    \begin{itemize}
        \item[(i)] If $(r')^\ast$ has a $G$-ray $r = \langle v_1,v_2,v_3,\dots \rangle $, then $r$ induces a $G'$-ray $\overline{r}$. It is clear that $\overline{r} \cap r'$ have infinitely many edges, so the edge-equivalence $\overline{r} \sim_E r'$ holds, meaning $[\overline{r}] = \varepsilon$ and so $\rho_{[r]} = \rho$. 
        \item[(ii)] Otherwise, by \myref{starcomb} (star-comb) we find a $v$-centered star contained in $(r')^*$. The infinitely many edges incident to $v$, call them $r$, induce a $G'$-ray $\overline{r}$ that by the same argument is edge-equivalent with $r'$, and therefore $[\overline{r}] = \varepsilon$. Hence, again via our homeomorphism, we get $ \rho_v = \rho $. 
    \end{itemize}
    
    Observe that if $\rho_\varepsilon = \rho = \rho_v $ then $v$ edge-dominates $\varepsilon$, so $\rho$ is a rayless direction if, and only if, $v$ is timid.
\end{proof}
\fi

In particular, the above establishes a bijection between timid vertices of infinite degree and rayless edge-directions.

\subsection{The completion graph}
\label{directionedgerep}

In this section, we ask if every edge-direction space is an edge-end space and if it is possible to construct some sort of completion $\tilde{G}$ of $G$ such that $G \leq \tilde{G}$ and $G$ is direction-complete. We shall answer both inquiries positively by finding a graph $\tilde{G}$ with $\mathcal{D}_E(\tilde{G}) \approx \Omega_E(\tilde{G}) \approx \mathcal{D}_E(G)$. This shall be accomplished by adding ends to $G$ to fill the gap $\mathcal{D}_E(G)\backslash \iota(\Omega_E(G))$ of rayless directions. 

\paragraph{Completion graph construction} Let $G$ be any graph, we construct a graph $\tilde{G} \geq G$ by adding edges to $G$. For each rayless direction $\varepsilon = [r] \in \Omega(G')$, consider the following claims and constructions. 

\begin{itemize}
    \item[(i)] By \myref{directionrepresentation} we find a star centered at $v_\varepsilon$ and contained in $r^*$ (an edge-direction given by a vertex of infinite degree $\rho_v$ that corresponds to $[r]$ via \myref{edgedirectionsareendsofedgegraph}).
    \item[(iii)] We construct the graph $\tilde{G}$, with the same vertex set by adding \emph{new edges} $\tilde{e}$ in the following way: add just enough edges to $G$ to make a \emph{new ray} $\tilde{r}_\varepsilon$ out of any (countably) infinite set of neighbors of $v_\varepsilon$ for each rayless direction $\varepsilon \in \Omega(G')$. 
    \item[(iv)] At the end of this process we obtain $\tilde{G}$, a\footnote{A graph may have multiple completions : the choice $\varepsilon \mapsto v_\varepsilon$, the choice of star centered on this vertex, the choice of the set of vertices that will compose a ray and the order they will appear in the ray are all arbitrary.} \emph{completion of the graph $G$}. We shall use the convention to denote \emph{new edges} with a tilde $\tilde{e} \in \tilde{\mathrm{E}}(G) \doteq \mathrm{E}(\tilde{G})\backslash \mathrm{E}(G)$ and represent them with dashed lines in our illustrations. The \emph{old edges} $e \in \mathrm{E}(G)$ are naturally still edges from $\tilde{G}$. 

    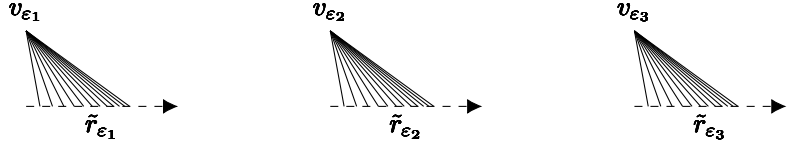
\begin{figure}[h!]
    \centering
    \begin{tikzpicture}
        \def\radius{0.15}
        \def\dy{1}
        \def\dx{2}
    
        \draw[dashed,->] (\dx,0)-- (2*\dx,0);
        \draw[dashed,->] (3*\dx,0)-- (4*\dx,0);
        \draw[dashed,->] (5*\dx,0)-- (6*\dx,0);
    
        \foreach \i in {1,3,5}{
            \foreach \n in {1,...,12}{
                \pgfmathtruncatemacro{\j}{(\i+1)/2};
                \node[above] at (\i*\dx,\dy) {$v_{\varepsilon_\j}$};
                \node[below] at ({\i*\dx + \dx/2},0) {$\tilde{r}_{\varepsilon_\j}$};
                \draw (\i*\dx,\dy) -- ({(\i+1-(1.1)^(-\n))*\dx},0);
            }
        }
    \end{tikzpicture}
\caption{New rays for rayless directions $\varepsilon_1,\varepsilon_2$ and $\varepsilon_3$ of $G'$.}
\end{figure}
    
    \item[(v)] It cannot be the case that two distinct $\varepsilon_1, \varepsilon_2$ have the same chosen vertex $v_{\varepsilon_1} = v_{\varepsilon_2}\doteq v$, because there would be $r_i \in \varepsilon_i$ such that there is a $v$-centered star included in both $r_i^\ast$, but then every vertex of $r_1$ is adjacent to every vertex of $r_2$ and that is testified by the fact that $v$ is a common vertex between every edge, meaning $\varepsilon_1 = \varepsilon_2$.
\end{itemize}

Note that \myref{directionrepresentation} gives rise to a map $\psi_G:\mathcal{D}_E(G) \to \Omega_E(\tilde{G})$, since now we do not have any rayless directions. More precisely, $\psi_G(\rho) \doteq [r]$ if this direction comes from a $G$-ray, and we take the edge-end given by the new ray $\psi_G(\rho) \doteq [\tilde{r}_\varepsilon]$ for a rayless direction. In what follows, we will argue this is a homeomorphism.

\begin{proposition}
\label{directioncompleteness}
    The map $\psi_G$ is surjective. In other words, every edge-end of $\Omega_E(\tilde{G})$ is edge-equivalent to either a $G$-ray or to a new ray.
\end{proposition}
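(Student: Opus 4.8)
The plan is to take an arbitrary $\tilde{G}$-ray $s$, manufacture from it an edge-direction $\rho\in\mathcal{D}_E(G)$, and then verify that $[s]=\psi_G(\rho)$ in $\Omega_E(\tilde{G})$; since every edge-end of $\tilde{G}$ is the class of some such $s$, this yields surjectivity. First I would define $\rho$. Fix a finite $F\subset \mathrm{E}(G)$ and recall that each new edge of a ray $\tilde{r}_\varepsilon$ joins two consecutive neighbours $w,w'$ of the vertex $v_\varepsilon$; since the spokes $\{v_\varepsilon,w\}$ and $\{v_\varepsilon,w'\}$ are \emph{old} edges, such a new edge merely short-circuits the length-two $G$-path $w\,v_\varepsilon\,w'$. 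Calling a new edge of $s$ \emph{problematic} when one of its two associated spokes lies in $F$, I would observe that $s$ meets only finitely many problematic new edges (a bounded multiple of $|F|$, as each $f\in F$ can be a spoke of at most two chosen vertices and borders at most two new edges of each $\tilde{r}_\varepsilon$) and only finitely many edges of $F$ itself. Deleting this finite set from $s$ and keeping the last infinite piece gives a tail $t'$ whose every old edge avoids $F$ and whose every new edge is non-problematic; by the short-circuit remark every vertex of $t'$ then lies in a single component of $G\setminus F$, which I define to be $\rho(F)$. Routine checks show $\rho(F)$ is infinite and that $\rho(F_2)\subset\rho(F_1)$ for $F_1\subset F_2$, so $\rho\in\mathcal{D}_E(G)$.

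Next I would identify $\psi_G(\rho)$ using \myref{directionrepresentation}, which splits into two parallel cases. If $\rho$ is represented by a $G$-ray $r$, then $\psi_G(\rho)=[r]$ and, by definition of the representation, a tail of $r$ lies in $\rho(F)$ for every finite $F\subset\mathrm{E}(G)$. If instead $\rho$ is rayless, it is represented by a timid vertex of infinite degree, which by the completion construction is exactly the vertex $v_\varepsilon$ carrying the new ray $\tilde{r}_\varepsilon$, so $\psi_G(\rho)=[\tilde{r}_\varepsilon]$; here $\rho(F)$ is the component of $G\setminus F$ containing $v_\varepsilon$, and since for all but finitely many indices the spoke $\{v_\varepsilon,w_i\}$ survives in $G\setminus F$, a tail of $\tilde{r}_\varepsilon$ again lies in $\rho(F)$. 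In either case $\psi_G(\rho)$ is represented by a ray $q$ (equal to $r$ or to $\tilde{r}_\varepsilon$) whose tail-vertices lie in $\rho(F)$ for every finite $F\subset\mathrm{E}(G)$, and the same holds for $s$ by construction of $\rho$.

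Finally I would upgrade this common behaviour over old edge-sets to genuine edge-equivalence in $\tilde{G}$. Given any finite $F'\subset\mathrm{E}(\tilde{G})$, set $F\doteq F'\cap\mathrm{E}(G)$; then $G\setminus F$ is a subgraph of $\tilde{G}\setminus F'$ on the same vertex set, so the single component $\rho(F)$ of $G\setminus F$ is contained in one component of $\tilde{G}\setminus F'$. As suitably late tails of both $s$ and $q$ avoid $F'$ and have all their vertices in $\rho(F)$, they lie in the same component of $\tilde{G}\setminus F'$; hence $s$ and $q$ are edge-equivalent and $[s]=[q]=\psi_G(\rho)$. The main obstacle is the first step: a priori $s$ may weave through the new rays of infinitely many distinct directions $\varepsilon$, so one cannot hope to confine it to a single $\tilde{r}_\varepsilon$. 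The short-circuit observation is what saves the argument, since it is purely local and only the finitely many spokes meeting $F$ can ever move $s$ between components of $G\setminus F$; making that finiteness bound precise is the crux of the proof.
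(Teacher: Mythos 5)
Your proof is correct, but it takes a genuinely different route from the paper's. The paper argues by direct ray surgery: it writes an arbitrary $\tilde{G}$-ray as a sequence of edges, isolates the maximal segments of new edges together with their star centers $v_n$, and splits into two cases --- if some center recurs infinitely often, the ray shares infinitely many edges with that center's new ray $\tilde{r}_\varepsilon$ and is hence edge-equivalent to it; otherwise each segment $P_n$ is substituted by the two old spokes $\langle e_n^1, e_n^2\rangle$ through $v_n$, yielding an equivalent $G$-ray. You instead factor the problem through the direction space: from the ray $s$ you manufacture an edge-direction $\rho\in\mathcal{D}_E(G)$ via the short-circuit observation (which is exactly the paper's spoke substitution, used locally rather than globally), invoke \myref{directionrepresentation} to represent $\rho$ by a $G$-ray or by a timid vertex of infinite degree, and then verify $[s]=\psi_G(\rho)$ uniformly over all finite $F'\subset\mathrm{E}(\tilde{G})$ via the clean observation that $G\setminus(F'\cap\mathrm{E}(G))$ is a spanning subgraph of $\tilde{G}\setminus F'$. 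There is no circularity in this, since \myref{directionrepresentation} precedes the proposition and $\psi_G$ is defined from it. What each approach buys: the paper's is shorter and self-contained, but its substitution step is informal --- a maximal run of new edges may mix edges from different stars, and a recurring center turns the substituted object into a walk from which a ray must still be extracted --- whereas your route delegates the case split to the star--comb machinery already proved, makes the finiteness bookkeeping explicit (at most $4|F|$ problematic new edges, since by item (v) of the construction each vertex is the center of at most one star), and in fact establishes the slightly stronger statement that $s\mapsto\rho$ inverts $\psi_G$ on edge-ends, which anticipates the injectivity argument in \myref{completiontheorem}. One small imprecision: the timid vertex representing a rayless $\rho$ need not literally be $v_\varepsilon$, since vertex representation is unique only up to the relation $\sim_E$; this is harmless here because the construction chose $v_\varepsilon$ with $\rho_{v_\varepsilon}=\rho$, so you may simply work with $v_\varepsilon$ directly, and $\psi_G(\rho)=[\tilde{r}_\varepsilon]$ holds by definition of $\psi_G$ in any case.
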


\if \proofshow 1
\begin{proof}
    Let $\tilde{r} = \langle e_0,e_1,\dots \rangle$ be a $\tilde{G}$-ray represented here as a sequence of edges instead of as a sequence of vertices. 
    \begin{itemize}
        \item If eventually the edges of $\tilde{r}$ are all old, we are done by taking a tail.
        \item Suppose now there are infinitely many new edges. Enumerate pairs $\ell_n \doteq (P_n \doteq \langle e_{k_n},e_{k_n+1},\dots,e_{p_n}\rangle,v_n) $ of the maximal segments of new edges associated to the star with center $v_n$.
        \begin{itemize}
            \item If, for infinitely many $n$, $v_n = v = v_\varepsilon$ then $\tilde{r}$ is edge-equivalent to $\tilde{r}_\varepsilon$.
            \item If that is not the case, then $\tilde{r}$ passes through finitely many edges of each center $v_n$.
        \end{itemize}
    \end{itemize}    

    \begin{figure}[h!]
    \centering
    \begin{tikzpicture}
        \def\radius{0.15}
        \def\dy{1}
        \def\dx{1.4}

        \draw (0,0)-- (\dx,0);
        \draw (2*\dx,0)-- (3*\dx,0);
        \draw (4*\dx,0)-- (5*\dx,0);
        \draw[->] (6*\dx,0)-- (7*\dx,0);
        \draw[dashed] (\dx,0)-- (2*\dx,0);
        \draw[dashed] (3*\dx,0)-- (4*\dx,0);
        \draw[dashed] (5*\dx,0)-- (6*\dx,0);
    
        \foreach \i in {1,3,5}{
            \foreach \n in {1,...,10}{
                \pgfmathtruncatemacro{\j}{(\i+1)/2};
                \node[above] at (\i*\dx,\dy) {$v_\j$};
                \node[below] at ({\i*\dx + \dx/2},0) {$p_\j$};
                
                \node[left] at ({\i*\dx},\dy/2) {$e_\j^1$};
                \node[left] at ({(\i+1)*\dx},\dy/2) {$e_\j^2$};
                
                \draw (\i*\dx,\dy) -- ({(\i+1-(1.1)^(-\n))*\dx},0);
                }
        }
    \end{tikzpicture}
\caption{Construction of $(e_n^1,e_n^2)$.}
\end{figure}
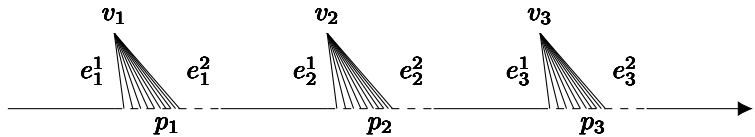
    
    Using this hypothesis we can find pairs of old edges $(e_n^1,e_n^2)$ such that substituting each $P_n$ fragment in the ray for the edge-path $\langle e_n^1,e_n^2 \rangle$ gives us an equivalent $G$-ray of edges.
\end{proof}
\fi 

In the following, we argue that the construction of $\tilde{G}$ is edge-direction preserving, i.e. 

\begin{lemma}
\label{directionpreserving}
    For any graph $G$, $\mathcal{D}_E(G) \approx \mathcal{D}_E(\tilde{G})$ and, therefore, the end spaces of the line graphs of these graphs are homeomorphic.
\end{lemma}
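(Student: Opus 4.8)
The plan is to exhibit an explicit bijection $\Phi:\mathcal{D}_E(G)\to\mathcal{D}_E(\tilde G)$, show it is a homeomorphism, and then read off the statement about line-graph end spaces from \myref{edgedirectionsareendsofedgegraph}. Since $\tilde G$ is obtained from $G$ by adding edges only, every $G$-ray is a $\tilde G$-ray, and by \myref{directionrepresentation} every $\rho\in\mathcal{D}_E(G)$ is represented either by a $G$-ray $r$ or by a timid vertex $v_\varepsilon$ of infinite degree. I define $\Phi$ to send a ray-direction to the direction of $\tilde G$ determined by the same ray $r$, and to send a rayless direction $\varepsilon$ to the direction determined by its new ray $\tilde r_\varepsilon$. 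Surjectivity of $\Phi$ is precisely \myref{directioncompleteness}, which guarantees that every $\tilde G$-direction is represented by a $G$-ray or by a new ray; moreover each $v_\varepsilon$ now edge-dominates $\tilde r_\varepsilon$ through its infinitely many old star-edges, so $\tilde G$ has no rayless directions and $\Phi$ indeed lands among ray-directions.

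The structural observation on which every remaining step rests is that the new edges are \emph{redundant for separation}: each new edge $\{w_i,w_{i+1}\}$ has both endpoints adjacent, in $G$, to the infinite-degree vertex $v_\varepsilon$. Hence, after deleting a finite edge-set, the endpoints of the surviving new edges remain joined to $v_\varepsilon$ through old star-edges, with only finitely many exceptions. Well-definedness of $\Phi$ on the ray part is immediate, since adding edges can only coarsen separation and therefore preserves edge-equivalence of $G$-rays. For injectivity I must show that two directions separated in $G$ remain separated in $\tilde G$: given a finite $F\subset\mathrm{E}(G)$ witnessing the separation in $G\setminus F$, I enlarge it to a finite $F'\subset\mathrm{E}(\tilde G)$ that additionally deletes the finitely many new edges which could bridge the two sides of the $F$-cut.

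The heart of the argument, and the step I expect to be the main obstacle, is the finiteness of these bridging new edges. A new ray $\tilde r_\varepsilon$ can have a vertex on the side of the $F$-cut not containing $v_\varepsilon$ only for those neighbors $w_i$ with $\{v_\varepsilon,w_i\}\in F$; since distinct rayless directions have distinct centers $v_\varepsilon$ (item (v) of the completion construction) and each edge of the finite set $F$ can be charged to boundedly many (center, neighbor) incidences, only finitely many such ``excursion'' vertices occur, each meeting at most two new edges. Deleting the new edges incident to excursion vertices yields $F'$, after which no surviving new edge crosses the cut while the internal old edges of each side stay intact, so the two tails remain separated; the same charging argument, combined with \myref{directionrepresentation} to first separate the relevant centers or rays inside $G$, handles a $G$-ray against a new ray and two distinct new rays against each other.

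Finally, for the topological conclusion I verify that $\Phi$ and $\Phi^{-1}$ are continuous by pulling back subbasic sets $\pi_F^{-1}(C)$: the assignment $F\mapsto F'$ above, together with the redundancy observation, sets up a natural bijection between the infinite components of $G\setminus F$ and those of $\tilde G\setminus F'$ that commutes with the transition maps, so the two inverse systems have homeomorphic limits by 2.5.10 of \cite{eng}. Applying \myref{edgedirectionsareendsofedgegraph} to $G$ and to $\tilde G$ then upgrades $\mathcal{D}_E(G)\approx\mathcal{D}_E(\tilde G)$ to the homeomorphism $\Omega(G')\approx\Omega(\tilde G')$ of line-graph end spaces, completing the proof.
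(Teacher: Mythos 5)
Your proof is correct and, at its core, identical to the paper's: the decisive step --- enlarging a finite $F\subset\mathrm{E}(G)$ to a finite $F'\subset\mathrm{E}(\tilde G)$ by adding the at most $4|F|$ new edges neighboring $F$ on the stars, checking via the substitution $\seqq{a_i,a_{i+1}}\mapsto\seqq{a_i,v_\varepsilon,a_{i+1}}$ that the connected components of $G\setminus F$ and $\tilde G\setminus F'$ coincide as vertex sets, and passing to homeomorphic inverse limits over this cofinal correspondence by 2.5.10 of \cite{eng} --- is exactly the paper's proof of \myref{directionpreserving}. The pointwise scaffolding in your first paragraphs (the map $\Phi$, its injectivity, and surjectivity via \myref{directioncompleteness} plus direction-completeness of $\tilde G$) is logically superfluous for this lemma and mildly circular as stated, since direction-completeness of $\tilde G$ is obtained in the paper only as a corollary of \myref{completiontheorem}, which in turn relies on this lemma; but your final inverse-systems paragraph stands on its own, so this does not damage the argument.
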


\if \proofshow 1
\begin{proof}
    We shall prove that for every $F \in [\mathrm{E}(G)]^{< \aleph_0}$ there exists $\tilde{F} \in [\mathrm{E}(\tilde{G})]^{< \aleph_0}$ such that the equality\footnote{interpreting the connected components as sets of vertices.} $\mathrm{S}^{E}_{\tilde{G},\tilde{F}} = \mathrm{S}^E_{G,F}$ holds. We of course want $F \subset \tilde{F}$. For each $e \in F$, if $e$ is an edge of a star associated with the vertex $x = v_\varepsilon$, we add to $\tilde{F}$ the edges depicted in \myref{edgefigure}, in other words, $e$'s neighboring new-edges associated with this star. Same argument for the other tip of $e$. This means $|\tilde{F}|\leq 4|F|$ and therefore $\tilde{F}$ is still finite.\\

        \begin{figure}[ht!]
    \begin{minipage}{0.5\textwidth}
    \centering
    \begin{tikzpicture}
        \def\radius{0.15}
        \def\dy{2}
        \def\dx{2.5}
    
        \node[above] at (0,\dy) {$x = v_\varepsilon$};

        \draw (0,\dy) -- (0,0) node[pos = 0.8,right] {$e$};
    
        \foreach \n in {1,...,8}{
            \draw (0,\dy) -- ({(-1 +(1.4)^(-\n))*\dx},0);\draw (0,\dy) -- ({(1 - (1.4)^(-\n))*\dx},0);
        }

        \draw[dashed] (0,0) -- ({(-1 +(1.4)^(-1))*\dx},0) node[midway,below] {$\tilde{e}_e^1$};
        \draw[dashed] (0,0) -- ({(1 -(1.4)^(-1))*\dx},0) node[midway,below] {$\tilde{e}_e^2$};
        \end{tikzpicture}
    \end{minipage}%
    \begin{minipage}{0.5\textwidth}
    \centering
    \begin{tikzpicture}
        \def\radius{0.15}
        \def\dy{2}
        \def\dx{2.5}
    
        \node[above] at (0,\dy) {$ v_\varepsilon$};
    
        \foreach \n in {1,...,8}{
            \draw (0,\dy) -- ({(-1 +(1.4)^(-\n))*\dx},0);\draw (0,\dy) -- ({(1 - (1.4)^(-\n))*\dx},0);
        }

        \draw[dashed]  
            ({(-1 +(1.4)^(-1))*\dx},0)
            --
            ({(1 -(1.4)^(-1))*\dx},0)
            node[midway,below] {$\tilde{e}$};

        \draw (0,\dy) -- ({(-1 +(1.4)^(-1))*\dx},0) node[pos = .8,right] {$e^1_{\tilde{e}}$};
        \node[below] at ({(-1 +(1.4)^(-1))*\dx},0) {$a_i$};
        \draw (0,\dy) -- ({(1 - (1.4)^(-1))*\dx},0) node[pos = .8,left] {$e^2_{\tilde{e}}$};
        \node[below] at ({(1 -(1.4)^(-1))*\dx},0) {$a_{i+1}$};
        \end{tikzpicture}
    \end{minipage}
    \caption{For each $e \in F$ we add $\tilde{e}_{e}^i$ in the situation above to $\tilde{F}$, if they exist. This means that $\tilde{e} \notin \tilde{F}\implies e_{\tilde{e}}^i \notin F$.}
    \label{edgefigure}
    \end{figure}
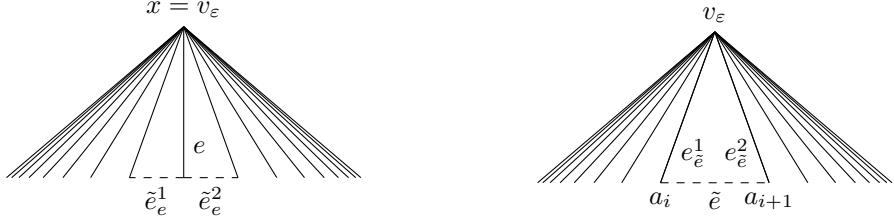

    Let $C$ be $G$-connected in $G \backslash F$, then between any two $u,v \in C$ there is a $G$-walk $P$ between them, but this is\footnote{Remember $G \leq \tilde{G}$.} also a $\tilde{G}$-walk in $G \backslash \tilde{F}$, and we are done. Let $C$ be a $\tilde{G}$-connected component of $\tilde{G} \backslash \tilde{F}$, then between any two $u,v \in C$ there must be a walk $P \doteq \langle u \doteq a_0,a_1,a_2,\dots,a_n \doteq v \rangle$ and the edges between any two contiguous vertices in this walk must be a $\tilde{G}$-edge outside $\tilde{F}$. If all of these edges are in $G$, then $P$ is a $G$-walk with no $F$-edge, which means that $C$ is $G$-connected in $G \backslash F$. Otherwise, take a new-edge $\tilde{e} = \{a_i,a_{i+1}\}$ that appears in $P$, then $\tilde{e}$ was added as $\tilde{e} = \tilde{e}_{\varepsilon,n}$ for some $\varepsilon \in \Omega(G')$ and there is a chosen $v_\varepsilon$. If one of these $e_{\tilde{e}}^i$ were in $F$, then we would have $\tilde{e} \in \tilde{F}$ which is a contradiction. We substitute\footnote{Walks are strings, so we can make these substitutions as long as we respect vertex adjacency.} $\langle a_i,a_{i+1} \rangle$ for $\langle a_i,v_\varepsilon,a_{i+1}\rangle$. Doing this for each new edge in $P$ we arrive at a $G$-walk in $G\backslash F$ testifying $u,v$ are in the same connected component in $G \backslash F$.
\end{proof}
\fi

It is clear that the transition maps will work in the above, and the proof follows the same lines as in \myref{edgedirectionsareendsofedgegraph}. 

\begin{corollary}[Proposition 2.5.10 in \cite{eng}]
\label{directionhomeomorphism}
    For any $G$ the homeomorphism relation $\mathcal{D}_E(G) \approx \mathcal{D}_E(\tilde{G})$ holds, which means that $\Omega(G')\approx \Omega(\tilde{G}')$ under \myref{edgedirectionsareendsofedgegraph}.
\end{corollary}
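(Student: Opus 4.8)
The plan is to read this corollary as the formal promotion, via Engelking's inverse-limit machinery, of the system-level equivalence already extracted in the proof of \myref{directionpreserving}. Writing $\Theta\colon [\mathrm{E}(G)]^{<\aleph_0}\to[\mathrm{E}(\tilde{G})]^{<\aleph_0}$ for the assignment $F\mapsto\tilde{F}$ constructed there, that proof supplies, for every finite $F$, the literal equality $\mathrm{S}^E_{G,F}=\mathrm{S}^E_{\tilde{G},\tilde{F}}$ of discrete spaces (components read as their vertex-sets). So I would first record that $\Theta$ identifies the inverse system $\{\mathrm{S}^E_{G,F}\}$ with the subsystem of $\{\mathrm{S}^E_{\tilde{G},\tilde{F}}\}$ indexed by $\Theta\big([\mathrm{E}(G)]^{<\aleph_0}\big)$, and then invoke Proposition 2.5.10 of \cite{eng} to pass to the limits and conclude $\mathcal{D}_E(G)\approx\mathcal{D}_E(\tilde{G})$.

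Concretely, the verification breaks into three checks, the last of which carries the real content. First, \emph{monotonicity}: since $\tilde{F}$ is $F$ together with the new edges abutting endpoints of edges of $F$ at their star-centers, $F_1\subseteq F_2$ forces $\Theta(F_1)\subseteq\Theta(F_2)$. Second, the \emph{space identifications commute with transitions}: the identification is equality of vertex-sets, and both $\varepsilon^G_{F_1,F_2}$ and $\varepsilon^{\tilde{G}}_{\tilde{F}_1,\tilde{F}_2}$ send a component to the unique larger component containing it, so the squares commute on the nose---this is the ``the transition maps will work'' remark. Third, and most importantly, \emph{cofinality of the image}: I must show every finite $F'\subseteq\mathrm{E}(\tilde{G})$ is contained in some $\Theta(F)$. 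Given $F'$, I would keep its old edges and, for each new edge $\tilde{e}=\{a,b\}$ of $F'$ lying on a new ray $\tilde{r}_\varepsilon$, adjoin to $F$ the old star-edge $\{v_\varepsilon,a\}$; since $\tilde{e}$ is then a new edge abutting an endpoint of an edge of $F$ at the center $v_\varepsilon$, the construction of $\Theta$ places $\tilde{e}\in\Theta(F)$, whence $F'\subseteq\Theta(F)$.

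With cofinality in hand, Proposition 2.5.10 of \cite{eng} gives that the limit of the full system $\{\mathrm{S}^E_{\tilde{G},\tilde{F}}\}$ agrees with the limit of its cofinal subsystem indexed by $\Theta\big([\mathrm{E}(G)]^{<\aleph_0}\big)$, which under the equalities of the previous step is exactly $\mathcal{D}_E(G)$; hence $\mathcal{D}_E(G)\approx\mathcal{D}_E(\tilde{G})$. The second assertion then follows by composing with \myref{edgedirectionsareendsofedgegraph}, which yields $\mathcal{D}_E(G)\approx\Omega(G')$ and $\mathcal{D}_E(\tilde{G})\approx\Omega(\tilde{G}')$, so that $\Omega(G')\approx\Omega(\tilde{G}')$. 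I expect the only genuinely non-formal point to be the cofinality check: one has to be sure that arbitrarily ``deep'' new edges of $\tilde{G}$---those sitting far along some new ray and touching no old edge of $F'$---can still be captured, which is precisely why every new edge is arranged to lie between two old star-edges at a common center $v_\varepsilon$.
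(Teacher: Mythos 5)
Your proposal is correct and takes essentially the same route as the paper: the paper also derives this corollary from the system-level equality $\mathrm{S}^E_{G,F}=\mathrm{S}^E_{\tilde{G},\tilde{F}}$ established in \myref{directionpreserving}, the observation that $F\mapsto\tilde{F}$ sends $[\mathrm{E}(G)]^{<\aleph_0}$ onto a cofinal suborder of $[\mathrm{E}(\tilde{G})]^{<\aleph_0}$, and Proposition 2.5.10 of \cite{eng}, with the second assertion obtained by composing with \myref{edgedirectionsareendsofedgegraph}. Your explicit cofinality witness (adjoining the old star-edge $\{v_\varepsilon,a\}$ for each new edge $\{a,b\}\in F'$) simply fills in the verification that the paper asserts in one sentence without detail.
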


The association $F \leftrightarrow \tilde{F}$ in the proof of \myref{directionpreserving} associates $[\mathrm{E}(G)]^{<\aleph_0}$ to a cofinal suborder from $[\mathrm{E}(\tilde{G})]^{<\aleph_0}$. In this case, it is not difficult to see that

\begin{corollary}
\label{psicont}
    The map $\psi_G:\mathcal{D}_E(G)\to \Omega_E(\tilde{G}) $ is continuous.
\end{corollary}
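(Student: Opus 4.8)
The plan is to reduce continuity to checking preimages of a convenient basis of $\Omega_E(\tilde{G})$, exploiting the cofinal correspondence $F \leftrightarrow \tilde{F}$ from the proof of \myref{directionpreserving}. Recall that that proof furnishes, for each $F \in [\mathrm{E}(G)]^{<\aleph_0}$, a finite set $\tilde{F} \in [\mathrm{E}(\tilde{G})]^{<\aleph_0}$ with $F \subseteq \tilde{F}$ and $\mathrm{S}^E_{\tilde{G},\tilde{F}} = \mathrm{S}^E_{G,F}$ when components are read as sets of vertices, and that the assignment $F \mapsto \tilde{F}$ has cofinal image in $[\mathrm{E}(\tilde{G})]^{<\aleph_0}$. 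Since enlarging the separating edge-set only refines the component partition, this cofinality implies that the collection $\set{\Omega_E(\tilde{F},\eta) \st F \in [\mathrm{E}(G)]^{<\aleph_0}, \eta \in \Omega_E(\tilde{G})}$ is itself a basis for $\Omega_E(\tilde{G})$: given any basic open set $\Omega_E(\tilde{F}'',\eta)$ with $\eta$ in it, a set $\tilde{F} \supseteq \tilde{F}''$ in the cofinal image yields $\Omega_E(\tilde{F},\eta) \subseteq \Omega_E(\tilde{F}'',\eta)$ still containing $\eta$. Thus it suffices to show that $\psi_G^{-1}(\Omega_E(\tilde{F},\eta))$ is open for every such $\tilde{F}$ and $\eta$.

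The core of the argument is the claim that, for every $\rho \in \mathcal{D}_E(G)$ and every $F$, the component $C(\tilde{F},\psi_G(\rho))$ of $\tilde{G}\setminus\tilde{F}$ coincides with $\rho(F)$ under the identification $\mathrm{S}^E_{\tilde{G},\tilde{F}} = \mathrm{S}^E_{G,F}$. I would verify this in the two cases of \myref{directionrepresentation}. If $\rho$ is represented by a $G$-ray $r$, then $\psi_G(\rho)=[r]$ and a tail of $r$ lies in $\rho(F)$; as $r$ is also a $\tilde{G}$-ray, its tail lies in the component of $\tilde{G}\setminus\tilde{F}$ identified with $\rho(F)$, giving the claim. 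If $\rho=\rho_{v_\varepsilon}$ is rayless, then $\psi_G(\rho)=[\tilde{r}_\varepsilon]$, where $\tilde{r}_\varepsilon$ runs through infinitely many old-neighbours $a_i$ of $v_\varepsilon$; since $v_\varepsilon$ has infinite degree and $\tilde{F}$ is finite, all but finitely many old edges $\{v_\varepsilon,a_i\}$ avoid $\tilde{F}$, so a tail of $\tilde{r}_\varepsilon$ lies in the same component of $\tilde{G}\setminus\tilde{F}$ as $v_\varepsilon$. Under the identification this is the infinite component of $G\setminus F$ containing $v_\varepsilon$, which is exactly $\rho(F)=\rho_{v_\varepsilon}(F)$.

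Granting the claim, fix $\tilde{F}$ and $\eta$ and let $C_0 \in \mathrm{S}^E_{G,F}$ be the component of $G\setminus F$ identified with $C(\tilde{F},\eta)$. Then $\psi_G(\rho)\in\Omega_E(\tilde{F},\eta)$ holds exactly when $C(\tilde{F},\psi_G(\rho))=C(\tilde{F},\eta)$, which by the claim is equivalent to $\rho(F)=C_0$. Hence $\psi_G^{-1}(\Omega_E(\tilde{F},\eta))=\mathcal{D}_E(F,\rho_0)$ for any $\rho_0$ with $\rho_0(F)=C_0$, a basic open set of $\mathcal{D}_E(G)$, so $\psi_G$ is continuous.

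The step I expect to require the most care is the rayless case of the claim: one must ensure that a tail of the newly added ray $\tilde{r}_\varepsilon$ is genuinely pulled back into the component of $v_\varepsilon$ through old edges surviving in $\tilde{G}\setminus\tilde{F}$, and that this component is infinite (so that it indeed names a point of $\mathrm{S}^E_{G,F}$). This relies on the specific bookkeeping of $\tilde{F}$ in \myref{directionpreserving} — namely that the neighbouring new edges of each $e\in F$ were thrown into $\tilde{F}$ — which is precisely what prevents a spurious new edge from merging distinct components and thereby corrupting the identification.
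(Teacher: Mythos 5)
Your proof is correct and takes essentially the same route the paper intends: the paper deduces \myref{psicont} precisely from the cofinality of the association $F \mapsto \tilde{F}$ noted after \myref{directionpreserving} (declaring the rest ``not difficult to see''), and your argument—reducing to the basis $\set{\Omega_E(\tilde{F},\eta)}$ and verifying $C(\tilde{F},\psi_G(\rho)) = \rho(F)$ in the two cases of \myref{directionrepresentation}—is exactly the fleshed-out version of that sketch. The resulting pullback identity $\psi_G^{-1}(\Omega_E(\tilde{F},\eta)) = \mathcal{D}_E(F,\rho_0)$ is the intended computation, so nothing is missing.
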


\begin{theorem}
\label{completiontheorem}
    Let $G$ be a graph and $\tilde{G}$ its completion. The map $\psi_G$ is a homeomorphism.
\end{theorem}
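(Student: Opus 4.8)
The plan is to promote the continuous surjection $\psi_G$ of \myref{directioncompleteness} and \myref{psicont} to a homeomorphism by establishing injectivity and then identifying $\psi_G$ with a composition of maps already known to be homeomorphisms. First I would dispose of injectivity directly: given distinct $\rho_1\neq\rho_2$ in $\mathcal{D}_E(G)$, choose a finite $F\subset\mathrm{E}(G)$ with $\rho_1(F)\neq\rho_2(F)$ and let $\tilde F$ be the completion of $F$ from the proof of \myref{directionpreserving}, for which $\mathrm{S}^E_{\tilde G,\tilde F}=\mathrm{S}^E_{G,F}$ as sets of vertices. The rays representing $\psi_G(\rho_1)$ and $\psi_G(\rho_2)$ have tails in the distinct components $\rho_1(F)$ and $\rho_2(F)$ of $\tilde G\setminus\tilde F$, so $\psi_G(\rho_1)\neq\psi_G(\rho_2)$.

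Next I would argue that $\tilde G$ is \emph{direction-complete}, so that the canonical embedding $\iota_{\tilde G}\colon\Omega_E(\tilde G)\to\mathcal{D}_E(\tilde G)$ of \myref{DEF_CanonicalEmbed} is surjective and hence (being always a topological embedding) a homeomorphism. By \myref{directionhomeomorphism} every edge-direction of $\tilde G$ equals $h(\rho)$ for a unique $\rho\in\mathcal{D}_E(G)$, where $h\colon\mathcal{D}_E(G)\approx\mathcal{D}_E(\tilde G)$ is the homeomorphism coming from the cofinal association $F\leftrightarrow\tilde F$. If $\rho$ is represented by a $G$-ray $r$, then that same $r$ represents $h(\rho)$ in $\tilde G$; if $\rho$ is rayless, hence represented by a timid vertex $v_\varepsilon$ of infinite degree by \myref{directionrepresentation}, then the new ray $\tilde r_\varepsilon$, built from infinitely many neighbours of $v_\varepsilon$, represents $h(\rho)$, and in $\tilde G$ this makes $v_\varepsilon$ edge-dominate a ray, so no fresh rayless direction is introduced. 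In either case $h(\rho)$ is ray-represented, whence $\iota_{\tilde G}$ is onto.

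Finally I would verify that the triangle $\iota_{\tilde G}\circ\psi_G=h$ commutes. On a ray-represented $\rho$ both sides return the $\tilde G$-direction carried by the representing $G$-ray, and on a rayless $\rho$ both return the $\tilde G$-direction carried by $\tilde r_\varepsilon$; this is precisely how $\psi_G$ and $h$ were defined on the two cases, so the identity holds pointwise. Since $h$ and $\iota_{\tilde G}$ are homeomorphisms, $\psi_G=\iota_{\tilde G}^{-1}\circ h$ is one too, which is the desired conclusion.

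I expect the genuinely delicate point to be the direction-completeness of $\tilde G$: one must rule out that the star-and-ray additions manufacture brand-new timid vertices of infinite degree (and hence new rayless directions), and one must confirm that the commuting triangle pins $\psi_G$ down as the cofinal homeomorphism $h$ of \myref{directionhomeomorphism} rather than some other map. I note the shortcut available when $G$ is connected, where $\mathcal{D}_E(G)$ is compact and $\Omega_E(\tilde G)$ is Hausdorff, so a continuous bijection is automatically a homeomorphism; but since the statement allows arbitrary $G$, I would avoid relying on compactness and argue openness through the identification $\mathrm{S}^E_{\tilde G,\tilde F}=\mathrm{S}^E_{G,F}$ instead.
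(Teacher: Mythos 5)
Your proposal is correct, but it takes a genuinely different route from the paper's. The paper proves injectivity by a short case analysis on whether the common image $\psi_G(\rho_1)=\psi_G(\rho_2)$ is the class of an old ray or of a new ray, and then finishes in one stroke: $\psi_G$ is a continuous bijection (by \myref{directioncompleteness} and \myref{psicont}) from the compact Hausdorff space $\mathcal{D}_E(G)$ onto a Hausdorff space, hence a homeomorphism. You instead factor $\psi_G=\iota_{\tilde{G}}^{-1}\circ h$, where $h$ is the homeomorphism of \myref{directionhomeomorphism}, by first showing $\tilde{G}$ is direction-complete (so that $\iota_{\tilde{G}}$ is a surjective embedding) and then checking the triangle commutes pointwise; compactness is never invoked. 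What your route buys is robustness for disconnected $G$: the compactness in \myref{COR_EdgeDirCompact} requires $G$ to have finitely many components (the paper itself notes that a disjoint union of infinitely many rays has non-compact, discrete $\mathcal{D}_E$), whereas \myref{completiontheorem} is stated for arbitrary graphs, so your openness-via-$\tilde{F}$ argument actually repairs the paper's final step in that generality. Your order of deduction is also reversed relative to the paper, which obtains direction-completeness of $\tilde{G}$ as a \emph{corollary} of the theorem, while you prove it first and derive the theorem from it. Two steps you gloss deserve a line each: that a tail of each new ray $\tilde{r}_\varepsilon$ lies in the component of $v_\varepsilon$ in $\tilde{G}\setminus\tilde{F}$ (true, since only finitely many of the star edges at $v_\varepsilon$ and of the new edges of $\tilde{r}_\varepsilon$ can meet the finite set $\tilde{F}$ --- this underlies both your injectivity claim and the commutation in the rayless case), and that $\iota_{\tilde{G}}$ is an embedding rather than merely a continuous injection (true: it carries the basic open set $\Omega_E(F,\varepsilon)$ onto the trace of $\pi_F^{-1}(C(F,\varepsilon))$ on the image, exactly as in the vertex analogue \myref{PROP_UTopEmbed}); both are at the paper's own level of rigor. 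What the paper's route buys in exchange is brevity: for connected $G$ the compact-to-Hausdorff argument is three lines and needs no analysis of $\iota_{\tilde{G}}$ or of the cofinal correspondence $F\leftrightarrow\tilde{F}$ beyond what \myref{psicont} already uses.
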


\begin{proof}
    Observe that $\psi_G:\mathcal{D}_E(G) \to \Omega_E(\tilde{G})$ is surjective by  \myref{directioncompleteness} and continous by \myref{psicont}. We prove that $\psi_G$ is injective. Suppose that $\tilde{\varepsilon} \doteq \psi_G(\rho_1) = \psi_G(\rho_2) = [r]$ is equivalent to an old ray $[r]_{\tilde{G}}$. Then $\rho_1 = \rho_2 = \rho_{[r]}$. Furthermore, if $\tilde{\varepsilon}$ is equivalent to a new ray $[\tilde{r}]$ associated with a star centered at $v$, then $\rho_1 = \rho_2 = \rho_v$. As $\mathcal{D}_E(G)$ is compact Hausdorff, $\psi_G$ is a homeomorphism.
\end{proof}

\sloppy In the previous setting, we guarantee the following homeomorphisms $ \Omega_E(\tilde{G}) \overset{}{\approx} \Omega(G')\overset{}{\approx} \mathcal{D}_E(G) \approx \mathcal{D}_E(\tilde{G})$, so

\begin{corollary}
    For every graph $G$, $G$'s completion graph $\tilde{G}$ is direction-complete. 
\end{corollary}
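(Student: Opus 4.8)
The plan is to unwind the definition: a graph is direction-complete precisely when its canonical embedding is onto, i.e.\ when it admits no rayless direction. For $\tilde{G}$ this amounts to showing that $\iota_{\tilde{G}}\colon\Omega_E(\tilde{G})\hookrightarrow\mathcal{D}_E(\tilde{G})$ is surjective. I would obtain this by exhibiting $\iota_{\tilde{G}}$ as built out of the two homeomorphisms already in hand. Write $\chi\colon\mathcal{D}_E(G)\to\mathcal{D}_E(\tilde{G})$ for the homeomorphism of \myref{directionhomeomorphism}, induced by the cofinal correspondence $F\leftrightarrow\tilde{F}$ of the proof of \myref{directionpreserving} (so $\chi(\rho)$ is read off componentwise via the equality $\mathrm{S}^E_{\tilde{G},\tilde{F}}=\mathrm{S}^E_{G,F}$), and recall that $\psi_G\colon\mathcal{D}_E(G)\to\Omega_E(\tilde{G})$ is the homeomorphism of \myref{completiontheorem}. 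The key claim is the commutativity
\[
\iota_{\tilde{G}}\circ\psi_G=\chi.
\]

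To verify this I would check the two cases of \myref{directionrepresentation} on an arbitrary $\rho\in\mathcal{D}_E(G)$. If $\rho=\rho_{[r]}$ arises from a $G$-ray $r$, then $\psi_G(\rho)=[r]$ (viewed now as a $\tilde{G}$-ray, since $G\le\tilde{G}$), and $\iota_{\tilde{G}}([r])$ is the $\tilde{G}$-direction that picks, for each finite $\tilde{F}$, the component of $\tilde{G}\setminus\tilde{F}$ with an $r$-tail; because $\mathrm{S}^E_{\tilde{G},\tilde{F}}=\mathrm{S}^E_{G,F}$ identifies these components as vertex sets, this coincides with $\chi(\rho)$. If instead $\rho=\rho_v$ is rayless, with $v=v_\varepsilon$ the timid vertex of infinite degree, then $\psi_G(\rho)=[\tilde{r}_\varepsilon]$ for the new ray, and $\iota_{\tilde{G}}([\tilde{r}_\varepsilon])$ chooses, for each finite $\tilde{F}$, the component meeting a tail of $\tilde{r}_\varepsilon$. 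Here I would use that the vertices of $\tilde{r}_\varepsilon$ are neighbors of $v$ joined to it by old star-edges: deleting finitely many edges leaves infinitely many such star-edges intact, so $v$ lies in the same component as the tail of $\tilde{r}_\varepsilon$, whence $\iota_{\tilde{G}}([\tilde{r}_\varepsilon])(\tilde{F})$ is the component containing $v$, i.e.\ $\chi(\rho_v)(\tilde{F})$.

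Granting the identity $\iota_{\tilde{G}}\circ\psi_G=\chi$, the conclusion is immediate: $\psi_G$ and $\chi$ are bijections, so $\iota_{\tilde{G}}=\chi\circ\psi_G^{-1}$ is a bijection, in particular surjective. Hence $\mathcal{D}_E(\tilde{G})\setminus\iota_{\tilde{G}}(\Omega_E(\tilde{G}))=\emptyset$; that is, $\tilde{G}$ has no rayless directions and is therefore direction-complete.

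I expect the only genuine work to be the rayless case of the commutativity claim. One is tempted to argue abstractly from $\Omega_E(\tilde{G})\approx\mathcal{D}_E(\tilde{G})$ alone, but an abstract homeomorphism does not force the \emph{specific} injection $\iota_{\tilde{G}}$ to be onto, since a space may embed properly into a homeomorphic copy of itself. Thus the argument must track the maps themselves, and the delicate point is confirming that the new ray $\tilde{r}_\varepsilon$ induces in $\tilde{G}$ exactly the direction that $v$ induces there; this is precisely where the construction of $\tilde{r}_\varepsilon$ from neighbors of $v$, together with the infinite degree of $v$ in $G$, enters.
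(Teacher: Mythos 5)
Your proof is correct, and it follows the same route the paper takes implicitly: the paper simply states the corollary as a consequence of the chain $\Omega_E(\tilde{G})\approx \Omega(G')\approx \mathcal{D}_E(G)\approx \mathcal{D}_E(\tilde{G})$, offering no further argument. What you add is exactly the missing step that makes this derivation rigorous: as you observe, an abstract homeomorphism $\Omega_E(\tilde{G})\approx \mathcal{D}_E(\tilde{G})$ does not by itself force the \emph{canonical} embedding $\iota_{\tilde{G}}$ to be onto, so one must check that the specific maps compose to it. Your commutativity claim $\iota_{\tilde{G}}\circ\psi_G=\chi$ does this, and both cases check out: in the ray case the identification $\mathrm{S}^E_{\tilde{G},\tilde{F}}=\mathrm{S}^E_{G,F}$ from the proof of \myref{directionpreserving} matches components with an $r$-tail on the cofinal family $\{\tilde{F}\}$ (which determines a direction, since agreement on a cofinal suborder determines an element of the inverse limit); in the rayless case your observation that all vertices of $\tilde{r}_\varepsilon$ are joined to $v_\varepsilon$ by \emph{old} star-edges, only finitely many of which any $\tilde{F}$ can remove, shows $v_\varepsilon$ edge-dominates $\tilde{r}_\varepsilon$ in $\tilde{G}$ and hence lies in the component of the surviving tail. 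This is precisely the verification the paper's terse ``so'' presupposes (and it also explains, via \myref{directionrepresentation} applied to $\tilde{G}$, why no new timid vertices of infinite degree can arise in $\tilde{G}$ — a point a direct combinatorial argument would have to handle by hand, e.g.\ for vertices of finite $G$-degree lying on infinitely many new rays, which your argument sidesteps entirely).
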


From the fact that $\mathrm{S}^E_{G,\bullet}$ is always\footnote{It is important here that $G$ itself have finitely many connected components.} finite and \myref{edgedirectionsareendsofedgegraph}, it follows that:

\begin{corollary}\label{COR_EdgeDirCompact}
    For every connected graph $G$, the space $\mathcal{D}_E(G)$ is compact, meaning that the end space of every line-graph is compact.
\end{corollary}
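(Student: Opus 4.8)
The plan is to read off the result directly from the inverse-limit description of $\mathcal{D}_E(G)$, using connectivity only to guarantee that each stage of the system is finite (and hence compact). First I would establish the key finiteness fact: for a connected graph $G$ and any finite edge-set $F \in [\mathrm{E}(G)]^{<\aleph_0}$, the graph $G \backslash F$ has at most $|F| + 1$ connected components. This follows by induction on $|F|$, since deleting a single edge from a connected graph either leaves it connected or splits one component into exactly two, so it increases the number of components by at most one. In particular $\mathrm{S}^E_{G,F}$, which consists of (a subset of) the components of $G \backslash F$, is finite. As remarked in the footnote preceding the statement, this is precisely where the hypothesis matters: it suffices that $G$ have finitely many components, connectedness being the instance used here.

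With this in hand, each $\mathrm{S}^E_{G,F}$ is a \emph{finite discrete} space, hence compact and Hausdorff. Recall that $\mathcal{D}_E(G)$ was defined as the inverse limit $\varprojlim \mathrm{S}^E_{G,\bullet}$ of the system indexed by the upward-directed order $[\mathrm{E}(G)]^{<\aleph_0}$ under the transition maps $\varepsilon^G_{F_1,F_2}$. Invoking the standard fact, already cited from \cite{eng} in the discussion above, that the inverse limit of an inverse system of compact Hausdorff spaces is itself compact (and Hausdorff), I would conclude immediately that $\mathcal{D}_E(G)$ is compact.

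Finally, to obtain the stated reformulation about line-graphs, I would appeal to \myref{edgedirectionsareendsofedgegraph}, which gives the homeomorphism $\mathcal{D}_E(G) \approx \Omega(G')$. Since compactness is preserved by homeomorphisms, $\Omega(G')$ is compact as well, yielding that the end space of every connected line-graph is compact. I do not expect any genuine obstacle here: the only point requiring care is the finiteness of $\mathrm{S}^E_{G,F}$, since that is exactly the ingredient that can fail for graphs with infinitely many components and is what makes the inverse-limit compactness theorem applicable. Everything else is a direct invocation of earlier results.
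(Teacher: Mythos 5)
Your proposal is correct and takes essentially the same route as the paper: the paper likewise deduces compactness from the finiteness of each $\mathrm{S}^E_{G,F}$ (with a footnote noting that what matters is $G$ having finitely many components) combined with the cited fact from \cite{eng} that inverse limits of compact Hausdorff spaces are compact, and then applies \myref{edgedirectionsareendsofedgegraph} to transfer the conclusion to end spaces of line-graphs. The only difference is that you make explicit the induction giving the bound $|F|+1$ on the number of components of $G\backslash F$, which the paper leaves implicit.
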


Thus:

\begin{corollary}\label{COR_LineGraphAccumulation}
    Let $G$ be a connected graph. If $S \subset \Omega(G')$ is infinite, then $S$ has an accumulation point.
\end{corollary}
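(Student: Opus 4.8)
The plan is to reduce the statement to the standard topological fact that compact spaces are limit point compact, using the compactness of $\Omega(G')$ that the preceding results already supply. First I would invoke \myref{COR_EdgeDirCompact}, which asserts that for a connected graph $G$ the edge-direction space $\mathcal{D}_E(G)$ is compact, together with the homeomorphism $\mathcal{D}_E(G)\approx\Omega(G')$ from \myref{edgedirectionsareendsofedgegraph}. Since compactness is a topological invariant, these two facts combine to give that $\Omega(G')$ is itself compact. This is the only place where the connectedness hypothesis on $G$ is used, and it enters precisely through \myref{COR_EdgeDirCompact}.

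With compactness of $\Omega(G')$ in hand, the remainder is the purely point-set-topological implication that a compact space is limit point compact. I would argue by contraposition: suppose $S\subset\Omega(G')$ has no accumulation point. Then for every end $\varepsilon\in\Omega(G')$ there is a basic open neighborhood $U_\varepsilon$ of $\varepsilon$ meeting $S$ in at most the single point $\varepsilon$, that is, with $U_\varepsilon\cap S\subseteq\{\varepsilon\}$. The family $\{U_\varepsilon\st\varepsilon\in\Omega(G')\}$ is an open cover of the compact space $\Omega(G')$, so it admits a finite subcover $U_{\varepsilon_1},\dots,U_{\varepsilon_n}$. Each member of this subcover contains at most one point of $S$, whence $S\subseteq\{\varepsilon_1,\dots,\varepsilon_n\}$ is finite, contradicting the hypothesis that $S$ is infinite. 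Therefore every infinite $S\subset\Omega(G')$ has an accumulation point.

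There is essentially no obstacle to overcome here, as the corollary is an immediate consequence of compactness; the only point worth flagging is that the implication compact $\Rightarrow$ limit point compact requires no separation axioms on $\Omega(G')$, so the argument goes through for the edge-end–theoretic spaces exactly as stated. For this reason I would keep the proof short, citing \myref{edgedirectionsareendsofedgegraph} and \myref{COR_EdgeDirCompact} for the reduction and then recording the standard open-cover argument above, rather than appealing to any metrizability or Hausdorffness of the space.
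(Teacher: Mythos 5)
Your proof is correct and follows exactly the route the paper intends: the paper derives \myref{COR_LineGraphAccumulation} as an immediate consequence of \myref{COR_EdgeDirCompact} (via the homeomorphism of \myref{edgedirectionsareendsofedgegraph}), with the standard fact that compact spaces are limit point compact left implicit. Your spelled-out open-cover argument, including the observation that no separation axioms are needed, simply makes that implicit step explicit.
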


We should highlight the importance of the assumption that $G$ is connected in \myref{COR_EdgeDirCompact}: indeed, a disjoint union of infinitely many rays clearly produces a graph $G$ for which $\mathcal{D}_E(G)$ is infinite and discrete (and, therefore, not compact).

In this case, in order to show that the class of edge-direction spaces is strictly smaller than the class of edge-end spaces, it is not enough to find an edge-end space which is not compact. We nevertheless present such a counterexample in what follows:

\begin{figure}[h!]
    \centering
    \begin{tikzpicture}
        \def\radius{0.15}
        \def\dy{1}
        \def\dx{2.7}

        \foreach \x in {1,...,4}{
            \node (\x) at (\x*\dx+0.9,-\dy+1) {\tiny $\cdots$};
            
            \foreach \n in {0,...,2}{
                \draw[-{Latex[length=1.5mm]}] (\x*\dx,-\dy) -- ({(\x - 1 + (1.25)^(-\n))*\dx},0.8*\dy);
                \draw[-{Latex[length=1.5mm]}] (\x*\dx,-\dy) -- ({(\x + 1 - (1.25)^(-\n))*\dx},0.8*\dy);
    
                \draw (\x*\dx,-\dy) -- (\x*\dx,-1.5*\dy);
            }}

        \foreach \x in {1,...,4}{
            \foreach \n in {2,...,4}{
                \node (r\x\n) at ({(\x + 1 - (1.3)^(-\n+2))*\dx},0.8*\dy+0.2) {\tiny $\varepsilon_{\x}^{\n}$};
            }
            \node (r\x0) at ({(\x - 1 + (1.3)^(-1-1))*\dx},0.8*\dy+0.2) {\tiny $\varepsilon_{\x}^{0}$};
            \node (r\x1) at ({(\x - 1 + (1.3)^(-0-1))*\dx},0.8*\dy+0.2) {\tiny $\varepsilon_{\x}^{1}$};
        }

        \node (dots) at (4*\dx+1,-\dy-0.25) {$\cdots$};    
        \node (r) at (5*\dx,-1.5*\dy) {$\varepsilon_0$};

        \draw[->] (0,-1.5*\dy) -- (r);
    \end{tikzpicture}
    \caption{Figure of the graph given in \myref{notendspaceofedgegraph}.}
    \label{thisexamplefig}
\end{figure}

\begin{example}
    \label{notendspaceofedgegraph}
    Let $X$ be the edge-end space of the graph of \myref{thisexamplefig} -- we claim that it cannot be homeomorphic to $\Omega(G')$ for any $G$. 
    
    Indeed, suppose $G$ is a graph and $f\colon X\to \Omega(G')$ is a homeomorphism. Note that, for any two infinite sets of natural numbers $\set{n_i:i\in\omega}$ and $\set{m_j:j\in\omega}$, $\varepsilon_0$ is an accumulation point of 
    \[\set{\varepsilon_{n_i}^{m_j}: i,j\in\omega}.\]

    Thus, it follows that the end $f(\varepsilon_0)$ in $\Omega(G')$ is (either as a ray or as an infinite star) in a connected component $H$ of $G$ containing the set of ends $\set{f(\varepsilon_n^k):k\in\omega}$ (either as rays or as infinite stars) for cofinally many $n\in\omega$. Fix any such $n\in\omega$. Then note that $\set{\varepsilon_n^k:k\in\omega}$ has no accumulation point. It follows that $\set{f(\varepsilon_n^k):k\in\omega}\subset \Omega(H')\subset \Omega(G')$ is an infinite set with no accumulation point. Since $H$ is connected, this contradicts \myref{COR_LineGraphAccumulation}.
\end{example}

    \section{Compact edge-end spaces}
    As in \myref{THM_CompactCharacterization}, we will now use the inclusion of the edge-end space in its direction space to characterize compactness of the edge-end space topologically. Combining the fact that direction spaces are compact (\myref{COR_EdgeDirCompact}) and Hausdorff with \myref{directionrepresentation} we obtain the following characterization of edge-end compactness:

\begin{corollary}
\label{raylesscharact}
    The following are equivalent for a connected graph $G$:
    \begin{itemize}
        \item[(i)] The edge-end space of a graph $G$ is compact. 
        \item[(ii)] The canonical $\iota_G:\Omega_E(G) \rightarrow \mathcal{D}_E(G)$ includes $\Omega_E(G)$ as a closed subset of $\mathcal{D}_E(G)$.
        \item[(iii)] The set of rayless directions is an open set of $\mathcal{D}_E(G)$.
        \item[(iv)] Every timid vertex of infinite degree $v$ admits a finite edge-set $F$ such that $v \in C \in \mathrm{S}_{G,F}$ and $C$ is rayless.
    \end{itemize}
\end{corollary}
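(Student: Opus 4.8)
The plan is to prove the equivalences by exploiting two structural facts about $\mathcal{D}_E(G)$ for connected $G$: it is compact by \myref{COR_EdgeDirCompact} and Hausdorff (being an inverse limit of discrete spaces), and by \myref{DEF_CanonicalEmbed} the canonical map $\iota_G$ is a topological embedding whose image is exactly the complement of the set of rayless directions. Granting these, the implications (i)$\Leftrightarrow$(ii)$\Leftrightarrow$(iii) become essentially formal, and the only combinatorial content lies in (iii)$\Leftrightarrow$(iv), which I would extract from \myref{directionrepresentation}.

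For (i)$\Leftrightarrow$(ii): since $\iota_G$ is an embedding, $\Omega_E(G)$ is homeomorphic to $\iota_G(\Omega_E(G))$, so compactness of $\Omega_E(G)$ is equivalent to compactness of its image. In the compact Hausdorff space $\mathcal{D}_E(G)$ a subset is compact if and only if it is closed (closed-in-compact gives compact, compact-in-Hausdorff gives closed), and this yields exactly (i)$\Leftrightarrow$(ii). For (ii)$\Leftrightarrow$(iii): by \myref{DEF_CanonicalEmbed} the rayless directions form precisely $\mathcal{D}_E(G)\setminus\iota_G(\Omega_E(G))$, so that $\iota_G(\Omega_E(G))$ is closed if and only if the set of rayless directions is open; this is mere complementation.

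The heart of the argument is (iii)$\Leftrightarrow$(iv), argued pointwise at each rayless direction. The key observation is that if $C = \rho(F)$ is a \emph{rayless} infinite component, then every direction $\rho'$ with $\rho'(F) = C$ is itself rayless: otherwise, by \myref{directionrepresentation}(i), $\rho'$ would be represented by a ray whose tail lies in $C$, contradicting that $C$ has no ray. For (iv)$\Rightarrow$(iii), I would take any rayless direction, write it as $\rho_v$ for a timid vertex of infinite degree $v$ via the bijection in \myref{directionrepresentation}, use (iv) to obtain a finite $F$ with $v \in C \in \mathrm{S}_{G,F}$ and $C$ rayless, and conclude that the basic open set $\mathcal{D}_E(F,\rho_v) = \{\rho' : \rho'(F) = C\}$ consists of rayless directions by the observation above, so $\rho_v$ is interior to the rayless set. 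For (iii)$\Rightarrow$(iv), given a timid vertex of infinite degree $v$, openness supplies a basic neighborhood $\mathcal{D}_E(F,\rho_v)$ of rayless directions; setting $C = \rho_v(F)$, were $C$ to contain a ray $r$, the direction $\rho_{[r]} \in \iota_G(\Omega_E(G))$ would be non-rayless yet lie in $\mathcal{D}_E(F,\rho_v)$, a contradiction, so $C$ is rayless.

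I expect the main obstacle to be bookkeeping rather than conceptual: one must carefully distinguish rayless \emph{directions} (points of $\mathcal{D}_E(G)$ outside the embedded $\Omega_E(G)$) from rayless \emph{components} (infinite components of $G\setminus F$ hosting no ray), and confirm that the infinite-component requirement built into $\mathrm{S}^E_{G,F}$ is compatible with (iv)—here it matters that a rayless component such as an infinite star is still infinite and carries the direction $\rho_v$, so that $C = \rho_v(F)$ is legitimately a point of $\mathrm{S}_{G,F}$. The remaining steps are standard compact-Hausdorff topology, and connectedness of $G$ enters only through \myref{COR_EdgeDirCompact} to guarantee compactness of $\mathcal{D}_E(G)$.
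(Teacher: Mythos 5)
Your proof is correct and follows exactly the route the paper intends: the paper derives \ref{raylesscharact} by combining compactness of $\mathcal{D}_E(G)$ (\myref{COR_EdgeDirCompact}) and Hausdorffness with \myref{directionrepresentation}, and your write-up simply fills in those steps — compact-Hausdorff formalities for (i)$\Leftrightarrow$(ii)$\Leftrightarrow$(iii) and the pointwise basic-neighborhood argument via the rayless-direction/timid-vertex bijection for (iii)$\Leftrightarrow$(iv). Your care about $C=\rho_v(F)$ being infinite (since $v$ keeps infinite degree after removing finitely many edges) is a worthwhile detail the paper leaves implicit.
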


\subsection{Compactness and timid vertices}
\label{covering}

In this section we develop a combinatorial characterization analogue to \myref{THM_CompactCharacterization}. In order to do this, though, we will need to rely on a few constructions previously presented in the literature.

\paragraph{Edge-end spaces represented as end spaces} Given a graph $G$, consider the following graph $H_G$, which is defined in \cite{aurichi2024topologicalremarksendedgeend} after expanding some vertices to cliques. Formally, for each edge-dominating vertex $v\in \mathrm{V}(G)$ we will add a complete graph $K_v \doteq K_{\deg{v}}$ with $\deg(v)$ many vertices. Each $v$-neighbor $u$ corresponds to a vertex $u^v \in\mathrm{V}(K_v)$. Denoting by $D$ the set of vertices of $G$ that edge-dominate some ray, the vertex set of $H_G$ is given by \[\mathrm{V}(H_G) = (\mathrm{V}(G)\setminus D) \cup\displaystyle \bigcup_{v\in D}\mathrm{V}(K_v),\] while its edge set is defined by the following conditions:
\begin{itemize}
    \item[(i)] Add the edge $\{u_1^v,u_2^v\}$ for each pair of neighbors $u_1,u_2$ of the edge-dominating $v$, as to make $K_v$ into a complete sub-graph.
    \item[(ii)] We maintain the $G$ edges $\{u,v\}$ for timid pairs.
    \item[(iii)] Add the edge $\{u,u^v\}$ if $u$ is a timid neighbor of the dominating $v$.
    \item[(iv)] Add the edge $\{v^u,u^v\}$ if $u,v \in D$ are neighboring edge-dominating vertices.
\end{itemize}

Given a graph $G$, note that the construction of the graph $H_G$ is naturally equipped with a function $\theta: E(G) \longrightarrow E(H_G)$ defined as follows:

\begin{itemize}
        \item $\theta(\lbrace u,v\rbrace) = \lbrace u, v\rbrace$ if $u,v \in V(G)\setminus D$;
        \item $\theta(\lbrace u, v\rbrace) = \lbrace u, u^v\rbrace$ if $u\in V(G)\setminus D$ but $v \in D$;
        \item $\theta(\lbrace u, v\rbrace) = \lbrace v^u, u^v\rbrace$ if $u,v \in D$.
    \end{itemize}

This map also translates rays of $G$ into rays of $H_G$ in a natural sense. Indeed, if $r = \langle v_0,v_1,v_2,v_3\dots\rangle$ is a ray in $G$, we consider the ray $\theta(r)$ in $H_G$ whose presentation by its \textit{edges} is $\theta(\lbrace v_0,v_1\rbrace)s_1\theta(\lbrace v_1,v_2\rbrace)s_2\theta(\lbrace v_2, v_3\rbrace)\dots$, where

\begin{itemize}
        \item $s_i =\emptyset$ is the empty edge if $v_i\notin D$, for $i \geq 1$;
        \item $s_i  = \lbrace v_{i-1}^{v_i},v_{i+1}^{v_i}\rbrace$ is the edge in $K_{v_i}$ connecting the canonical edges $\theta(\lbrace v_{i-1}, v_i\rbrace)$ and $\theta(\lbrace v_i,v_{i+1}\rbrace)$, for $i \geq 1$.
    \end{itemize}

 \renewcommand{\myrectangle}[5]{
    \draw (#1,#2) rectangle ++ (#3,#4);
    \node[shift = {({-#3*0.5},{0.1*#4})}] at (#1+#3,#2+#4) {\large\textbf{#5}};
}

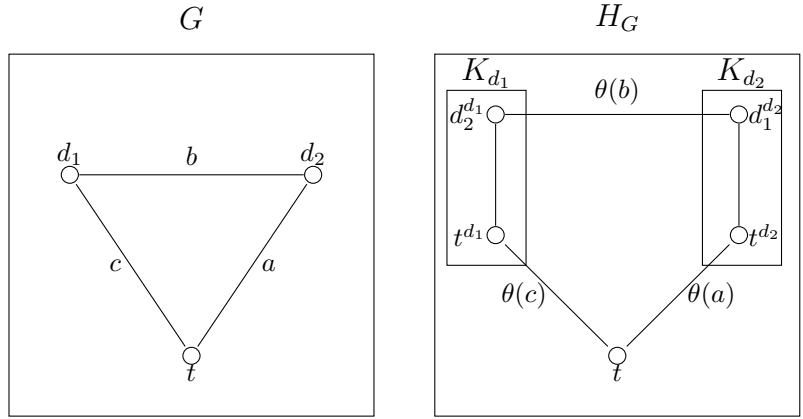
\begin{figure}[ht]
    \centering
    \begin{tikzpicture}
        \def \dx {0.8}
        \def \dy {0.8}

        \myrectangle{0}{0}{6*\dx}{6*\dy}{$G$}

        \node (t) at ({3*\dx},{\dy}) {};
        \node (d1) at ({\dx},{4*\dy}) {};
        \node (d2) at ({5*\dx},{4*\dy}) {};

        \draw (t)--(d1) node[midway,left] {$c$} --(d2) node[midway,above] {$b$} -- (t) node[midway,right] {$a$};
        \draw (t) circle (3.2pt);
        \draw (d1) circle (3.2pt);
        \draw (d2) circle (3.2pt);

        \node[below] at (t) {$t$};
        \node[above] at (d1) {$d_1$};
        \node[above] at (d2) {$d_2$};

        \begin{scope}[shift = {(7*\dx,0)}]
            \myrectangle{0}{0}{6*\dx}{6*\dy}{$H_G$}

        \node (t) at ({3*\dx},{\dy}) {};
        \node (d12) at ({\dx},{5*\dy}) {};
        \node (d21) at ({5*\dx},{5*\dy}) {};
        \node (d1t) at ({\dx},{3*\dy}) {};
        \node (d2t) at ({5*\dx},{3*\dy}) {};

        \draw (t)--(d2t) node[midway,right] {$\theta(a)$} --(d21) -- (d12) node[midway,above] {$\theta(b)$}-- (d1t) -- (t) node[midway,left] {$\theta(c)$};
        \draw (t) circle (3.2pt);
        \draw (d12) circle (3.2pt);
        \draw (d21) circle (3.2pt);
        \draw (d1t) circle (3.2pt);
        \draw (d2t) circle (3.2pt);

        \node[below] at (t) {$t$};
        \node[left] at (d12) {$d_2^{d_1}$};
        \node[right] at (d21) {$d_1^{d_2}$};
        \node[left] at (d1t) {$t^{d_1}$};
        \node[right] at (d2t) {$t^{d_2}$};

        \myrectangle{0.2*\dx}{2.5*\dy}{1.3*\dx}{2.9*\dy}{$K_{d_1}$}

        \begin{scope}[shift = {(4.2*\dx,0)}]
            \myrectangle{0.2*\dx}{2.5*\dy}{1.3*\dx}{2.9*\dy}{$K_{d_2}$}
        \end{scope}
        \end{scope}
    \end{tikzpicture}
    \caption{Construction of $H_G$. In the above, $d_1$ and $d_2$ are dominating, while $t$ is timid.}
    \label{H_Gfig}
\end{figure} 

 Using the map $\theta$, it was shown that
\begin{theorem}[Theorem 2.1 in \cite{aurichi2024topologicalremarksendedgeend}]\label{THM_HGHomeo}
For every graph $G$, $\Omega(H_G)\cong \Omega_E(G)$.
\end{theorem}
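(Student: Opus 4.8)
The plan is to realise the homeomorphism through the ray-translation map $\theta$ already fixed in the construction of $H_G$, setting $\Theta\colon \Omega_E(G)\to \Omega(H_G)$, $\Theta([r]_E)\doteq [\theta(r)]$, where $[r]_E$ is the edge-end of the $G$-ray $r$ and $[\theta(r)]$ its induced end in $H_G$. The whole argument rests on two complementary transport tools. First, a \emph{lifting} operation: any $G$-walk $W$ becomes an $H_G$-walk $\tilde W$ by sending each $G$-edge $e$ to $\theta(e)$ and bridging two consecutive edges meeting at a dominating vertex $v\in D$ through the corresponding clique-internal edge of $K_v$ (edges meeting at a timid vertex already share that vertex in $H_G$). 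Second, a \emph{projection} operation: any $H_G$-walk $P$ becomes a $G$-walk by replacing each non-clique edge with its $\theta$-preimage and each clique-internal edge $\{a^v,b^v\}$ of $K_v$ with the transit $a\,v\,b$. I will track exactly which separators each operation respects, since this is what encodes the topology. It is also useful to record that $\mathrm{V}(G)\setminus D=\mathrm{t}(G)$, so the vertices of $H_G$ that are \emph{not} blown up into cliques are precisely the timid ones.

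For the bijection I first verify injectivity: if a finite $F\subset \mathrm{E}(G)$ separates the tails of $r$ and $s$, take $X$ to be the set of endpoints in $H_G$ of the edges $\theta(e)$, $e\in F$, and project any hypothetical $X$-avoiding $H_G$-path between the tails of $\theta(r)$ and $\theta(s)$; this yields an $F$-avoiding $G$-walk between the tails of $r$ and $s$ (a non-clique edge or a clique edge of the path would force a visited vertex into $X$ the moment its underlying $G$-edge lay in $F$), a contradiction. For well-definedness I must instead, given a finite $X\subset \mathrm{V}(H_G)$ separating $\theta(r)$ from $\theta(s)$, build a finite $F$ separating $r$ from $s$: I place $\{u,v\}$ in $F$ for every clique-gateway $u^v\in X$, and for every timid vertex $u\in X\cap(\mathrm{V}(G)\setminus D)$ I append to $F$ a finite edge-set separating $u$ from the tails of both $r$ and $s$ --- such a set exists precisely because $u$ is timid and hence edge-dominates neither ray. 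Lifting an $F$-avoiding $G$-walk then produces an $X$-avoiding $H_G$-walk, the required contradiction. Surjectivity is handled by analysing an arbitrary $H_G$-ray $s$: if it uses infinitely many non-clique edges its projection contains a $G$-ray $r$ with $\theta(r)\sim s$; otherwise $s$ is eventually trapped in a single clique $K_v$, and since $v\in D$ edge-dominates some $G$-ray $r$, the infinitely many edge-disjoint $v$--$r$ paths keep the tails of $s$ and $\theta(r)$ together after deleting any finite vertex-set, so $[\theta(r)]=[s]$.

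For the topology I show that $\Theta$ is open and continuous, which with bijectivity gives the homeomorphism. Openness: for a basic set $\Omega_E(F,[r])$ and any point $[\theta(r')]$ of its image, the basic neighbourhood of $[\theta(r')]$ in $\Omega(H_G)$ cut out by $X\doteq\{\text{endpoints of }\theta(F)\}$ lies inside the image --- the projection tool sends any $X$-avoiding path witnessing membership back to an $F$-avoiding $G$-walk, so the pulled-back end remains in $r'$'s (hence $r$'s) component of $G\setminus F$, while surjectivity guarantees every such end is attained. Continuity: given the basic set of $\Omega(H_G)$ determined by a finite $X$ around $[\theta(r)]$ and a point $[r']$ of its preimage, I produce a finite $F'$ with $\Theta(\Omega_E(F',[r']))$ inside that set by the same tailored recipe as in well-definedness --- guard each clique-gateway of $X$ by its $G$-edge and separate each timid vertex of $X$ from $r'$'s tail by a finite edge-set --- so that the lift of any $G$-walk staying in $r'$'s component of $G\setminus F'$ avoids $X$.

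I expect the genuine obstacle to be the continuity direction (equivalently, the well-definedness half), because it is the only place where deletion of a vertex of $H_G$ must be simulated by deletion of \emph{finitely many} edges of $G$, and an original vertex $u\in \mathrm{V}(G)\setminus D$ may well have infinite degree. The resolution --- and the reason the construction blows up only the dominating vertices while retaining the timid ones --- is exactly the defining property of timidity: a timid vertex edge-dominates no ray, hence can be finitely edge-separated from any prescribed ray even when its degree is infinite. Making this uniform enough, namely separating such a $u$ simultaneously from the finitely many rays relevant to a given neighbourhood (invoking the star-comb lemma if a cleaner bound on the relevant paths through $u$ is wanted), is the step demanding the most care; everything else is bookkeeping through the lifting and projection operations.
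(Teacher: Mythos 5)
You should first note that the paper does not prove this statement at all: it is imported verbatim as Theorem 2.1 of \cite{aurichi2024topologicalremarksendedgeend}, so your proposal can only be compared against the natural argument through the map $\theta$, which is indeed the intended mechanism (the paper reproduces exactly the $\theta$-machinery you use, e.g.\ in \myref{LEMMA_Compon_Bijec}). Most of your proposal is correct and well calibrated: the lifting and projection operations are the right transport tools; the injectivity direction (endpoints of $\theta(F)$ as the separator $X$) works; and your treatment of well-definedness/continuity --- guarding each clique-gateway $u^v\in X$ by the single edge $\{u,v\}$ and each timid $u\in X$ by a finite edge-set separating $u$ from the relevant tails --- is precisely where timidity enters, and you correctly identify it as the crux: since $V(H_G)$ contains no un-blown-up edge-dominating vertex, every vertex of $X$ is either a clique vertex or timid, and timid vertices are exactly those that admit such finite edge-separators.

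The genuine gap is in surjectivity, Case 1: it is \emph{false} that an $H_G$-ray $s$ with infinitely many non-clique edges has a projection containing a $G$-ray. Take $G$ to be a ray $R=x_1x_2x_3\dots$ together with one vertex $c$ adjacent to every $x_i$; then $c$ edge-dominates $R$ (so $c\in D$ is blown up into the clique $K_c$ on $\{x_i^c\}_{i\in\omega}$), while each $x_i$ has finite degree and is timid. The $H_G$-ray $s= x_1^c\,x_1\,x_2\,x_2^c\,x_3^c\,x_3\,x_4\,x_4^c\,x_5^c\,x_5\dots$ uses infinitely many non-clique edges, yet its projection has underlying graph consisting of $c$ joined to all $x_i$ plus the matching $\{x_{2k-1},x_{2k}\}_{k\ge 1}$, which contains \emph{no} ray whatsoever (every component of it minus $c$ is a two-vertex path). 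The theorem is not threatened --- here $s$ meets every $x_i$, so $[s]=[\theta(R)]$ --- but your case split is misaligned. The repair: apply the star--comb lemma inside the underlying graph $H$ of the projection. In the comb case the spine $r$ uses edges of $H$, so $\theta(r)$ and $s$ share infinitely many edges, hence infinitely many vertices, giving $[\theta(r)]=[s]$. In the star case the center $c'$ must lie in $D$, because a timid center would force infinitely many non-clique edges of $s$ to be incident to the single vertex $c'$ of $H_G$, impossible for a path; so $s$ meets $K_{c'}$ infinitely often and your Case 2 mechanism applies. Relatedly, your Case 2 one-liner ("edge-disjoint paths keep the tails together after deleting any finite vertex-set") silently needs the observation that a finite $X\subset V(H_G)$ meets only finitely many of the lifted $v$--$r$ paths: each clique vertex $u^w\in X$ lies on at most one lift (by edge-disjointness of the $G$-paths), and a timid vertex of $X$ lying on infinitely many would itself edge-dominate $r$ via the edge-disjoint terminal segments, contradicting timidity. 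Both repairs stay entirely within your framework, but as written the surjectivity argument does not go through.
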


\noindent and it is easy to see that

\begin{proposition}\label{PROP_Dominant_FinComp}
    Let $G = (V,E)$ be a connected graph and $e\in E$ be an edge adjacent to a vertex $v$ which edge-dominates some ray in $G$. Then $|\mathrm{S}_{H_G, \{v_e\}}|\le 2$.
\end{proposition}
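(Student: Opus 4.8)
The plan is to reduce the claim to the elementary fact that deleting a single vertex whose neighbourhood breaks into only two connected pieces yields at most two components, once connectedness of $H_G$ is in hand. First I would unwind the notation: since $v$ edge-dominates a ray we have $v\in D$, so the clique $K_v=K_{\deg v}$ is present in $H_G$, and the edges of $G$ incident to $v$ are in natural bijection with $\mathrm{V}(K_v)$. Under this bijection the edge $e=\{v,u\}$ corresponds to the vertex I shall write as $v_e=u^v\in\mathrm{V}(K_v)$; this is precisely the point at which the hypothesis ``$v$ edge-dominates a ray'' is used, for without $v\in D$ there is no such clique vertex. Thus $\mathrm{S}_{H_G,\{v_e\}}$ is to be read as the set of components of $H_G$ after deleting the single vertex $u^v$.

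Before the main count I would record that $H_G$ is connected, which follows from the connectedness of $G$ together with the construction: the map that collapses each clique $K_w$ (for $w\in D$) back to $w$ and fixes each timid vertex sends every edge $\theta(\{a,b\})$ of $H_G$ onto the edge $\{a,b\}$ of $G$ and has connected fibres (a singleton over a timid vertex, the clique $K_w$ over a dominating $w$); hence any $G$-path lifts to an $H_G$-walk joining the corresponding fibres, and connectivity transfers upward. The heart of the argument is then to enumerate the neighbours of $u^v$ in $H_G$ directly from the four edge-rules defining it: rule (i) makes $u^v$ adjacent to every other vertex of $K_v$, and exactly one of rules (iii)/(iv) supplies a single neighbour outside $K_v$ — the timid vertex $u$ if $u\notin D$, or the vertex $v^u\in\mathrm{V}(K_u)$ if $u\in D$ — while no other rule (in particular not the timid–timid rule (ii)) places an edge at $u^v$. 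Consequently the open neighbourhood of $u^v$ is the union of the residual clique $K_v\setminus\{u^v\}$, whose vertices are pairwise adjacent and therefore all lie in a single component of $H_G-u^v$, together with one ``external'' vertex.

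To conclude, since $H_G$ is connected, every component of $H_G-u^v$ must contain at least one neighbour of $u^v$; as the neighbourhood of $u^v$ is covered by two connected pieces (the residual clique and the lone external vertex), there can be at most two such components, giving $|\mathrm{S}_{H_G,\{v_e\}}|\le 2$ — and \emph{a fortiori} at most two if $\mathrm{S}$ is taken to record only the infinite components. The only step requiring genuine care is the exhaustive neighbour count: one must verify from rules (i)--(iv) that the whole neighbourhood of $u^v$ sits inside $K_v$ except for exactly one outside vertex, so that no edge-rule secretly endows $u^v$ with a second external neighbour; everything else is routine bookkeeping.
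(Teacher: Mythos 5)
Your proof is correct. The paper states this proposition without proof (it is introduced with ``it is easy to see that''), and your argument --- reading $v_e$ as the clique vertex $u^v\in K_v$, verifying from rules (i)--(iv) that its neighbourhood in $H_G$ is exactly the residual clique $K_v\setminus\{u^v\}$ together with a single external vertex ($u$ or $v^u$), and combining this with the connectedness of $H_G$ --- is precisely the intended routine verification, with the neighbour enumeration (the one step needing care) carried out correctly.
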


Now suppose that $F$ is a finite set of timid vertices in $G$. Given a connected component $C\in \tilde{\mathrm{S}}_{G,F}$ with $v\in C$, let $C_H\in \tilde{\mathrm{S}}_{H_G,F}$ be the connected component containing $v$ if $v$ is timid, or, otherwise, containing $v_e$ for some edge $e\in C$ adjacent to $v$. In this case:

\begin{lemma}\label{LEMMA_Compon_Bijec}
    The association 
    \begin{align*}
        \varphi\colon \: \tilde{\mathrm{S}}_{G,F}\,&\to \,\tilde{\mathrm{S}}_{H_G,F}\\
        C&\mapsto C_H
    \end{align*}
    is a well defined bijection.
\end{lemma}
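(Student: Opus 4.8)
The plan is to realize $\varphi$ as the restriction of a purely structural correspondence between the components of $G\setminus F$ and those of $H_G\setminus F$, and then to check that this correspondence preserves the existence of rays. First I would record that, since $F\subset \mathrm{t}(G)$ consists of timid (hence non-edge-dominating) vertices, we have $F\subset \mathrm{V}(G)\setminus D\subset \mathrm{V}(H_G)$, so deleting $F$ is meaningful in both graphs and fixes the same vertices. To each component $C$ of $G\setminus F$ I associate the \emph{blob} $V_C\doteq (C\setminus D)\cup\bigcup_{v\in C\cap D}\mathrm{V}(K_v)\subset \mathrm{V}(H_G)\setminus F$. Using $\theta$ to send each edge of $G[C]$ to an edge of $H_G$, together with the fact that each $K_v$ is complete, $H_G[V_C]$ is connected; and since each $v\in D$ lies in a unique component of $G\setminus F$, the blobs partition $\mathrm{V}(H_G)\setminus F$. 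The core of this first step is to verify, going through the four edge-types (i)--(iv) defining $\mathrm{E}(H_G)$, that every edge of $H_G$ surviving the deletion of $F$ has both endpoints in a single blob: each such edge is the $\theta$-image of a $G$-edge whose endpoints, lying outside $F$, belong to one component of $G\setminus F$. Hence the blobs are exactly the components of $H_G\setminus F$, yielding a bijection $C\leftrightarrow V_C$.

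Next I would identify $\varphi$ with this blob map, which settles well-definedness. If the chosen $v\in C$ is timid then $v$ survives unchanged in $H_G$ and $v\in V_C$; if $v\in D$ then for any edge $e=\{v,u\}$ of $G[C]$ (one exists, since $C$ is infinite and connected) the endpoint of $\theta(e)$ lying in $K_v$ belongs to $V_C$. In either case the component of $H_G\setminus F$ meeting the prescribed vertex is $V_C$, independent of the choices of $v$ and $e$, so $C_H=V_C$ and $\varphi$ is well defined.

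It then remains to show that $C$ has a ray if and only if $V_C$ does, so that $\varphi$ restricts to a bijection $\tilde{\mathrm{S}}_{G,F}\to\tilde{\mathrm{S}}_{H_G,F}$. For the forward direction, $\theta$ carries a ray of $C$ to a ray of $V_C$ exactly as it carries rays of $G$ to rays of $H_G$. For the converse, let $q\colon H_G\to G$ be the contraction collapsing each $K_v$ to $v$ (an edge-surjection onto $G$ that fixes $F$ and satisfies $q(V_C)=C$), and let $\rho$ be a ray in $V_C$. If $q(\rho)$ meets infinitely many vertices of $G$ it contains a $G$-ray inside $C$ and we are done; otherwise $\rho$ is eventually trapped inside a single clique $K_v$ with $v\in C\cap D$. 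The main obstacle is precisely this last case: I must produce a ray in $C$ from the mere presence of an edge-dominating vertex $v$ in $C$. The key claim is that if $v\in D$ and $F$ is any finite set of timid vertices, then the component of $v$ in $G\setminus F$ contains a ray. To prove it, fix a ray $s$ edge-dominated by $v$; since each $f\in F$ is timid it fails to edge-dominate $s$, so a finite edge-set $E_f$ separates $f$ from some tail of $s$, and $E^*\doteq\bigcup_{f\in F}E_f$ is finite. Choosing a common tail $s'$ of $s$ that is separated from every $f\in F$ in $G\setminus E^*$, edge-domination keeps $v$ connected to $s'$ in $G\setminus E^*$, and the connecting walk lies in the component of $v$, which avoids $F$; hence this walk survives in $G\setminus F$ and places a tail of $s$ inside $C$. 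This shows $V_C$ has a ray if and only if $C$ does, completing the proof that $\varphi$ is a well-defined bijection between the ray-carrying components.
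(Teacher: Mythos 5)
Your route is genuinely different from the paper's. The paper proves well-definedness, injectivity and surjectivity directly, by translating $G$-paths through $\theta$ and pulling $H_G$-paths back through $\theta^{-1}$, working only with the ray-carrying components throughout; in particular, its surjectivity step simply asserts that every $\tilde{C}\in \tilde{\mathrm{S}}_{H_G,F}$ contains $\theta(r)$ for some $G$-ray $r$. You instead prove the stronger structural statement that $C\mapsto V_C$ is a bijection between \emph{all} components of $G\setminus F$ and \emph{all} components of $H_G\setminus F$, and only then prove that the correspondence preserves the existence of rays. This buys two things. First, well-definedness and injectivity become automatic. Second, it confronts head-on the one genuinely delicate point, which the paper leaves implicit: every edge-dominating vertex has infinite degree (edge-domination yields infinitely many edge-disjoint paths, each using a distinct initial edge), so each $K_v$ is an \emph{infinite} clique and itself contains rays; hence a ray of $H_G\setminus F$ may be trapped inside a clique, and producing a $G$-ray in the corresponding component requires an argument. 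Your key claim --- that for $v\in D$ and finite $F\subset \mathrm{t}(G)$ the component of $v$ in $G\setminus F$ contains a ray, proved by separating each timid $f\in F$ from a tail of a ray $s$ edge-dominated by $v$ using finitely many edges, and then using edge-domination to reconnect $v$ to that tail inside a component avoiding $F$ --- is exactly what is needed there, and it is correct.

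There is, however, one intermediate step that is wrong as stated: ``if $q(\rho)$ meets infinitely many vertices of $G$ it contains a $G$-ray inside $C$.'' The projection $q(\rho)$ is only a walk, and a walk meeting infinitely many vertices need not span a ray. Schematically: if $v\in D$ has timid neighbours $u_1,u_2,u_3,\dotsc$ and $G$ also contains the edges $u_2u_3,\, u_4u_5,\dotsc$, then $\rho = \seqq{u_1,\, u_1^v,\, u_2^v,\, u_2,\, u_3,\, u_3^v,\, u_4^v,\, u_4,\, u_5,\dotsc}$ is a legitimate $H_G$-ray (alternating clique edges of type (i) with edges of types (ii) and (iii)), yet $q(\rho)$ visits $v$ and all the $u_i$ while spanning only an infinite star at $v$ together with a matching --- a subgraph with no ray. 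The repair is immediate with tools you already have: if some $G$-vertex $w$ is visited infinitely often by $q(\rho)$, then injectivity of $\rho$ forces $w\in D$ (the ray must re-enter $K_w$ through fresh clique vertices, as timid vertices have a unique preimage), so $w\in C\cap D$ and your key claim hands you a ray in $C$; if instead every vertex is visited finitely often, then each vertex has finite degree in the subgraph spanned by the walk (each visit contributes at most two edges), so that subgraph is infinite, connected and locally finite, and K\H{o}nig's lemma (or \myref{starcomb}, whose star alternative is excluded by local finiteness) yields a ray in $C$. With this patch your argument is complete and, if anything, more thorough than the paper's at the point where the paper is briefest.
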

\begin{proof}
    Suppose $v$ and $v'$ are two vertices in $C\in \tilde{\mathrm{S}}_{G,F}$. Then there is a $G$-path $P$ in $C$ from $v$ to $v'$ which avoids $F$. Hence, such path is translated by $\theta$ to an $H$-path $P_H$ in $H_G$ which avoids $F$ as well, so the connected component $C_H$ obtained from $v$ is the same which is obtained from $v'$.\\

    To show that $\varphi$ is surjective, let $\tilde{C}\in \tilde{\mathrm{S}}_{H_G,F}$. Then there is a ray $r$ in $G$ such that $\theta (r)$ is in $\tilde{C}$. Let $C\in \tilde{\mathrm{S}}_{G,F}$ be the connected component containing a tail of $r$. Then it is clear that $C_H = \tilde{C}$.\\

    At last, suppose $C_H = C_H'$ for some $C, C'\in \tilde{\mathrm{S}}_{G,F}$. Without loss of generality, suppose a timid $v\in C$ and an edge $e\in C'$ adjacent to an edge-dominant vertex $v'\in C'$. Then there is an $H$-path $P_H$ in $C_H$ starting at $v$ and ending at $v_e$. Note that $\theta^{-1}(P_H)=P$ is a $G$-path connecting $v$ to $v'$, which shows that $C = C'$. Hence, $\varphi$ is injective and the proof is complete.
\end{proof}

We obtained all the tools to show that
\begin{theorem}
\label{compactnesscharacterization}
    The edge-end space of a graph $G$ is compact if, and only if, for every finite set of timid vertices $F$, the collection $\tilde{\mathrm{S}}_{G,F}$ of non-rayless connected components is finite.
\end{theorem}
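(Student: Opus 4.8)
The plan is to transport the whole question to the auxiliary graph $H_G$ by means of \myref{THM_HGHomeo}, which gives $\Omega_E(G)\approx \Omega(H_G)$, and then to read off compactness of the \emph{ordinary} end space $\Omega(H_G)$ through the vertex characterization \myref{THM_CompactCharacterization}. Thus $\Omega_E(G)$ is compact if and only if $\tilde{\mathrm{S}}_{H_G,F'}$ is finite for \emph{every} finite $F'\subseteq \mathrm{V}(H_G)$, and the entire theorem reduces to showing that this condition on all finite vertex sets of $H_G$ is equivalent to the stated condition involving only finite sets of \emph{timid} vertices of $G$. Throughout I would use that $\mathrm{V}(H_G)\supseteq \mathrm{V}(G)\setminus D = \mathrm{t}(G)$, so a finite set of timid vertices is in particular a finite set of vertices of $H_G$, and that by \myref{LEMMA_Compon_Bijec} we have $|\tilde{\mathrm{S}}_{G,F}| = |\tilde{\mathrm{S}}_{H_G,F}|$ for every finite timid $F$.

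For the forward implication I would simply specialize. If $\Omega_E(G)$ is compact, then $\tilde{\mathrm{S}}_{H_G,F'}$ is finite for every finite $F'\subseteq\mathrm{V}(H_G)$; taking $F'=F$ a finite set of timid vertices and applying the bijection $\varphi$ of \myref{LEMMA_Compon_Bijec} yields $|\tilde{\mathrm{S}}_{G,F}|=|\tilde{\mathrm{S}}_{H_G,F}|<\aleph_0$, which is exactly the desired conclusion.

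The substantive direction is the converse. Assuming $\tilde{\mathrm{S}}_{G,F}$ finite for every finite timid $F$, I must bound $\tilde{\mathrm{S}}_{H_G,F'}$ for an arbitrary finite $F'\subseteq \mathrm{V}(H_G)$. I would split $F'=F\sqcup D'$, where $F$ gathers the vertices of $F'$ lying in $\mathrm{t}(G)$ and $D'$ gathers the clique vertices $u^v$ (with $v\in D$). By hypothesis and \myref{LEMMA_Compon_Bijec}, $\tilde{\mathrm{S}}_{H_G,F}$ is finite. Each non-rayless $\tilde{C}\in\tilde{\mathrm{S}}_{H_G,F'}$ contains a ray $\theta(r)$ avoiding $F'$, hence avoiding $F$, so $\tilde{C}$ lies inside a unique $\hat{C}\in\tilde{\mathrm{S}}_{H_G,F}$; this defines a map $\tilde{\mathrm{S}}_{H_G,F'}\to\tilde{\mathrm{S}}_{H_G,F}$ onto a finite set. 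It then remains to bound each fibre, i.e. to show that deleting the finitely many clique vertices $D'\cap\hat{C}$ from the connected graph $\hat{C}$ leaves only finitely many components. Here I would invoke the robustness of cliques together with \myref{PROP_Dominant_FinComp}: each edge-dominating $v$ has infinite degree, so $K_v$ is an infinite complete graph and stays connected after deleting any finite subset of its vertices. Deleting the vertices of $D'\cap\hat{C}$ one at a time and arguing exactly as in \myref{PROP_Dominant_FinComp} (the surviving clique keeps everything together except a lone detachable branch), a single clique-vertex deletion breaks a connected graph into at most two pieces. An induction then gives at most $|D'\cap\hat{C}|+1\le|D'|+1$ components, so the fibres are finite and $|\tilde{\mathrm{S}}_{H_G,F'}|\le|\tilde{\mathrm{S}}_{H_G,F}|\,(|D'|+1)<\aleph_0$.

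The hard part will be precisely this last step: a clique vertex can have infinite degree, so deleting it could a priori create infinitely many components, and one must exploit the complete-graph redundancy supplied by \myref{PROP_Dominant_FinComp} to see that only finitely many pieces actually appear. I would also need to check that the one-vertex-at-a-time deletion keeps each intermediate clique connected, which the infinitude of $K_v$ guarantees; and I would note that the instance $F'=\emptyset$ already records that $G$ has finitely many non-rayless components, which covers any disconnectedness of $G$.
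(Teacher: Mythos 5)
Your proposal is correct and follows essentially the same route as the paper: reduce via \myref{THM_HGHomeo} and \myref{THM_CompactCharacterization} to finiteness of $\tilde{\mathrm{S}}_{H_G,F'}$ for all finite $F'\subset \mathrm{V}(H_G)$, handle the timid part of $F'$ with \myref{LEMMA_Compon_Bijec}, and control the clique vertices with \myref{PROP_Dominant_FinComp}. Your explicit fibre-counting induction (at most $|D'|+1$ pieces per component) is just a spelled-out version of the paper's terse final appeal to \myref{PROP_Dominant_FinComp}, and it is sound since edge-dominating vertices have infinite degree, so each clique stays connected under finitely many deletions.
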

\begin{proof}
    Suppose $\Omega_E(G)$ is compact and let $F$ be a given finite set of timid vertices. By \myref{THM_HGHomeo}, so is $\Omega(H_G)$. Thus, by \myref{THM_CompactCharacterization}, $\tilde{\mathrm{S}}_{H_G,F}$ is finite. In this case, it follows from \myref{LEMMA_Compon_Bijec} that $\tilde{\mathrm{S}}_{G,F}$ is also finite.\\

    Now assume $\tilde{\mathrm{S}}_{G,F}$ is finite for every finite set of timid vertices $F$ in $G$. We will show that $\tilde{\mathrm{S}}_{H_G,F}$ is finite for every finite $F\subset \mathrm{V}(H_G)$, which will conclude the proof in view of  \myref{THM_CompactCharacterization} and \myref{THM_HGHomeo}. So let such $F\subset \mathrm{V}(H_G)$ be given. Consider 
    \[F' = F\cap \mathrm{t}(G).\]

    Then $\tilde{\mathrm{S}}_{G,F'}$ is finite and, by  \myref{LEMMA_Compon_Bijec}, so is $\tilde{\mathrm{S}}_{H_G,F'}$. Furthermore, it follows from \myref{PROP_Dominant_FinComp} that the additional $F\setminus F'$ vertices can only separate the finite components of $\tilde{\mathrm{S}}_{H_G,F'}$ into more finite components in $\tilde{\mathrm{S}}_{H_G,F}$, so the proof is complete.
\end{proof}

\subsection{Compact edge-end spaces as edge-direction spaces}\label{SUBSEC_CompactEdgeEndIsEdgeDirection}

\paragraph{}
While we do know from \myref{notendspaceofedgegraph} that some edge-end spaces cannot be represented as edge-direction spaces, it should be noted that the counter-example space presented in \myref{notendspaceofedgegraph} is not compact -- and this fact was crucial in showing that it is indeed a counter-example. 

Thus, since the edge-direction space of any connected graph is always compact (as stated in \myref{COR_EdgeDirCompact}), it is only natural to ask whether the additional compactness hypothesis is sufficient to give us such a representation theorem. We answer this question positively in \myref{THM_CompectEdgeEndDirection}, but we will need some technical results before we get to that point. 

Given $u,v\in \mathrm{V}(G)$, we write 
\begin{equation}\label{EQ_VertexEdgeEqv}
    u\sim_E v
\end{equation} 
if, for every finite $F\subset \mathrm{E}(G)$, $u$ and $v$ share the same connected component in $G\setminus F$. Note that $v\in \mathrm{t}(G)$ and $u\sim_E v$ implies that $u\in \mathrm{t}(G)$. 

It will help us in the construction of the connected graph $H$ of \ref{THM_CompectEdgeEndDirection} that every timid vertex in $G$ is $\sim_E$-equivalent only to itself. In this case, let us denote by $\sim$ the equivalence relation over $\mathrm{V}(G)$ such that 
\[
v\sim u \iff \begin{cases}
                v\sim_E u, \text{ if $v\in \mathrm{t}(G)$,}\\
                v=u, \text{ otherwise.}
            \end{cases}
\]

Given $v\in \mathrm{V}(G)$, we let
\[ [v] \doteq \set{u\in \mathrm{V}(G): u\sim v}\]
and then consider the graph denoted by $G/_{\sim}$, in which 
\begin{align*}
 \mathrm{V}(G/_{\sim}) =& \set{[v]:v\in \mathrm{V}(G)}, \\
 \{[v],[u]\}\in \mathrm{E}(G/_{\sim}) \iff& \exists v'\sim v, \,  u'\sim u\left(\{v',u'\}\in \mathrm{E}(G)\right)
\end{align*}
for all $u,v\in \mathrm{V}(G)$. 

A vertex set $U \subset \mathrm{V}(G)$ is \emph{dispersed} when there is no comb in $G$ with infinitely many teeth in $U$. Note that, for any timid $v$, the vertex set $[v]$ is dispersed: 
Suppose there is a comb with infinitely many teeth $d_i \in [v]$ and spine $r$ and that $F\subset \mathrm{E}$ is finite. Then there must be an $i\in \mathbb{N}$ such that the path from $d_i$ to the $(G\setminus F)$-tail of $r$ does not pass through $F$. Furthermore, since $v\sim v'$, there must be a $(G\setminus F)$-path from $v$ to $v'$, so that $v$ is in the same connected component as the $(G\setminus F)$-tail of $r$ and thus $v$ edge-dominates $r$, a contradiction.

Now let $\pi\colon \mathrm{V}(G) \to \mathrm{V}(G/_{\sim})$ be the quotient projection (i.e., $\pi(v) = [v]$ for every $v\in \mathrm{V}(G)$). Then it is easy to see that, given $e\in \mathrm{E}(G/_{\sim})$, the set
\[\pi^{-1}e \doteq \set{\{x,y\}\in \mathrm{E}(G): e = \{\pi(x),\pi(y)\}}\]
is finite. This gives us:

\begin{lemma}\label{LEMMA_RayProjection}
    For every ray $r$ in $G$ there is a ray $r^\pi$ in $G/_{\sim}$ contained in $\pi[\mathrm{V}(r)]$. Moreover, if $r'$ is another ray which passes through infinitely many vertices of $\pi[\mathrm{V}(r)]$, then $r'\sim_{E(G/_{\sim})}r^\pi$.
\end{lemma}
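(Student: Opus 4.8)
\textbf{Proof plan for \myref{LEMMA_RayProjection}.}

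The plan is to build the ray $r^\pi$ directly from the vertex sequence of $r$ by projecting through $\pi$ and then deleting redundancies. Write $r = \langle v_0, v_1, v_2, \dots\rangle$ and consider the sequence $\langle \pi(v_0), \pi(v_1), \dots\rangle$ in $G/_\sim$. Consecutive terms $\pi(v_i), \pi(v_{i+1})$ are either equal (when $v_i \sim v_{i+1}$) or adjacent in $G/_\sim$ (by the definition of $\mathrm{E}(G/_\sim)$, since $\{v_i,v_{i+1}\}\in\mathrm{E}(G)$ witnesses the edge). So the projected sequence is a walk in $G/_\sim$ once we collapse each maximal block of repeated terms to a single vertex. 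First I would argue this collapsed walk visits infinitely many distinct vertices of $G/_\sim$: this is exactly where the dispersedness observation preceding the lemma enters. If only finitely many distinct classes appeared, then infinitely many $v_i$ would fall into a single class $[v]$; since $r$ is a ray through these $v_i$, truncating $r$ appropriately would exhibit a comb (or at least infinitely many teeth) into $[v]$, contradicting that $[v]$ is dispersed when $v$ is timid — and when $v$ is not timid, $[v] = \{v\}$ is a singleton, so a ray cannot revisit it infinitely often. Hence infinitely many distinct classes occur.

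Having infinitely many distinct vertices along a walk, I would extract an honest ray $r^\pi$: walk along the collapsed sequence and, whenever a vertex repeats, excise the closed segment between the two occurrences (a standard ray-from-walk extraction). The result is a ray in $G/_\sim$, and by construction every one of its vertices is of the form $\pi(v_i)$, so $\mathrm{V}(r^\pi) \subset \pi[\mathrm{V}(r)]$, giving the first assertion.

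For the moreover clause, suppose $r'$ is a ray in $G/_\sim$ passing through infinitely many vertices of $\pi[\mathrm{V}(r)]$. I would show $r'$ and $r^\pi$ are edge-equivalent in $G/_\sim$ by producing a third ray meeting each in infinitely many vertices, using the characterization of edge-equivalence by a common ray intercepting both infinitely often (as recalled in the introduction). The natural candidate is $r^\pi$ itself together with the shared vertices: since both $r'$ and $r^\pi$ live inside the countable set $\pi[\mathrm{V}(r)]$ and each meets it infinitely often, I would exhibit infinitely many finite paths joining $r'$ to $r^\pi$ within $G/_\sim$ (connecting a vertex of $r'$ in $\pi[\mathrm{V}(r)]$ to a nearby vertex of $r^\pi$ along the image of $r$) and invoke the comb/ray-interception criterion to conclude $r' \sim_{E(G/_\sim)} r^\pi$.

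The main obstacle I anticipate is the \emph{moreover} clause rather than the construction of $r^\pi$: extracting a ray from an infinite-support walk is routine, but establishing edge-equivalence requires controlling how edges of $G/_\sim$ separate the two rays. The subtlety is that edge-equivalence in $G/_\sim$ is coarser than in $G$, and one must check that the finitely many edges removed cannot separate $r'$ from $r^\pi$ given that both thread through the same image set $\pi[\mathrm{V}(r)]$; here the finiteness of each fiber $\pi^{-1}e$, noted just before the lemma, should guarantee that any finite edge-set in $G/_\sim$ pulls back to a finite edge-set in $G$, letting us transfer the connectivity of $r$ in $G$ down to the needed connectivity of $r^\pi$ and $r'$ in $G/_\sim$.
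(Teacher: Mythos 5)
Your proposal is correct and follows essentially the same route as the paper: the paper also builds $r^\pi$ by jumping to the last visit of each $\sim$-class along $r$ (your excision procedure, justified by exactly your dispersed/singleton dichotomy), and proves the \emph{moreover} clause precisely by the mechanism in your final paragraph — given a finite $F\subset \mathrm{E}(G/_{\sim})$, pull it back to the finite set $\pi^{-1}F\subset\mathrm{E}(G)$, take a tail of $r$ avoiding it, and project that tail to a walk in $(G/_{\sim})\setminus F$ joining a tail of $r^\pi$ to a vertex of $r'$ beyond which $r'$ avoids $F$. Two minor refinements: the extraction of a ray from the projected walk needs the slightly stronger fact that \emph{each} class is visited only finitely often (not merely that infinitely many classes occur), which your dichotomy argument already delivers; and the third-ray interception criterion you cite is the one for vertex-equivalence and is an unnecessary detour here — indeed the vertices where $r'$ meets $\pi[\mathrm{V}(r)]$ need not lie on $r^\pi$ at all, so the direct per-$F$ verification you sketch at the end is both cleaner and what the paper actually does.
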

\begin{proof}
    Suppose $r=\seq{v_n:n\in\omega}$ is a ray in $G$. From the observation discussed above, there must be an $n_0\in\omega$ (possibly equal to $0$) such that $v_0\sim v_{n_0}$ and $v_0\nsim v_n$ for every $n\ge n_0$. For the same reason, there must be an $n_1>n_0$ (possibly equal to $n_0+1$) such that $v_{n_0+1}\sim v_{n_1}$ and $v_{n_0+1}\nsim v_n$ for every $n\ge n_1$. By proceeding in this manner we clearly construct a ray $r^\pi = \seq{[v_{n_k}]:k\geq 0}$ in $G/_{\sim}$ which is contained in $\pi[\mathrm{V}(r)]$.

    Now assume that $r'=\seq{[u_n]:n\in\omega}$ is a ray passing through infinitely many vertices in $\pi[\mathrm{V}(r)]$ and a finite $F\subset E(G/_{\sim})$ is given. Since $\pi^{-1} F\doteq \set{\pi^{-1}e:e\in F}$ is finite, there must be an $N\in\omega$ such that $\mathrm{E}(\seq{v_n:n\ge N})\cap \pi^{-1}F = \emptyset$. In this case, fix $n_k\in\omega$ such that $n_k>N$. Since $r'$ intersects $\pi[\mathrm{V}(r)]$ in infinitely many vertices, there must be an $M>n_k$ such that $[v_M]\in \mathrm{V}(r')$ and the tail of $r'$ from $M$ onward avoids $F$. Hence, $\seqq{[v_{n_k}], [v_{n_k+1}], \dotsc, [v_M]}$ attests that the tails of $r^\pi$ and $r'$ in $(G/_{\sim})\setminus F$ are in the same connected component of $(G/_{\sim})\setminus F$.
\end{proof}

Thus a map $\bar{\pi}\colon \Omega_E(G)\to \Omega_E(G/_{\sim})$ such that $\pi([r]_E) = [r^\pi]_{E(G/_{\sim})}$, where $r^\pi$ is a ray contained in $\pi[\mathrm{V}(r)]$ (as in \myref{LEMMA_RayProjection}) is well-defined. 

We can obtain another corollary of \myref{LEMMA_RayProjection}):

\begin{corollary}
     If $v\in \mathrm{V}(G)$ is edge-dominant in $G$, then $[v]=\{v\}$ is edge-dominant in $G/_{\sim}$.
\end{corollary}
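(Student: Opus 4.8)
The plan is to prove the contrapositive-flavored statement directly: assume $v \in \mathrm{V}(G)$ edge-dominates some ray $r$ in $G$ and show that $[v] = \{v\}$ edge-dominates the projected ray $r^\pi$ in $G/_{\sim}$. First I would record that since $v$ edge-dominates a ray, $v$ is not timid, so by the definition of $\sim$ we have $[v] = \{v\}$; this disposes of the trivial-but-necessary first claim and lets us treat $[v]$ as a genuine singleton vertex of the quotient. Let $r$ be a ray edge-dominated by $v$, and let $r^\pi$ be the ray in $G/_{\sim}$ contained in $\pi[\mathrm{V}(r)]$ furnished by \myref{LEMMA_RayProjection}.

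The core of the argument is to transport the edge-domination witness from $G$ up to $G/_{\sim}$. Edge-domination of $r$ by $v$ means that for every finite $F \subset \mathrm{E}(G)$, the vertex $v$ lies in the same connected component of $G \setminus F$ as a tail of $r$. So fix a finite $F_\sim \subset \mathrm{E}(G/_{\sim})$; I would pull it back to the finite edge-set $\pi^{-1}F_\sim \subset \mathrm{E}(G)$, which is finite precisely because each fibre $\pi^{-1}e$ is finite (the observation established just before \myref{LEMMA_RayProjection}). By hypothesis there is a $(G \setminus \pi^{-1}F_\sim)$-path $P$ from $v$ to a tail of $r$. Applying $\pi$ to $P$ gives a walk in $G/_{\sim}$ from $[v]$ to a vertex of $\pi[\mathrm{V}(r)]$ that avoids $F_\sim$, since no edge of $P$ lies over $F_\sim$ and hence $\pi$ maps every edge of $P$ to an edge outside $F_\sim$ (or collapses it). Because $r^\pi$ is cofinal in $\pi[\mathrm{V}(r)]$ and this argument works for all finite $F_\sim$, I can conclude that $[v]$ is in the same component of $(G/_{\sim}) \setminus F_\sim$ as a tail of $r^\pi$, which is exactly edge-domination of $r^\pi$ by $[v]$ in $G/_{\sim}$.

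The one step needing care — and the likely main obstacle — is verifying that the image $\pi[P]$ genuinely reaches a \emph{tail} of $r^\pi$ and not merely some finite initial part of $\pi[\mathrm{V}(r)]$: the projected path lands in $\pi[\mathrm{V}(r)]$, but I must ensure connectivity to an arbitrarily late segment of $r^\pi$. This is handled by combining the second clause of \myref{LEMMA_RayProjection} with the fact that $r^\pi$ is itself a ray inside $\pi[\mathrm{V}(r)]$: any vertex $\pi(w)$ reached (with $w$ on a tail of $r$) is connected along $\pi[\mathrm{V}(r)]$ to the corresponding tail of $r^\pi$, and since $r$'s tail avoids $\pi^{-1}F_\sim$ its $\pi$-image avoids $F_\sim$. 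Alternatively, and perhaps more cleanly, I would simply invoke \myref{LEMMA_RayProjection} directly: the path-plus-ray-tail in $G$ yields, under $\pi$, a ray in $G/_\sim$ passing through infinitely many vertices of $\pi[\mathrm{V}(r)]$, which by the moreover-clause is $E(G/_{\sim})$-equivalent to $r^\pi$, and one checks this whole configuration avoids $F_\sim$. Once that bookkeeping is in place the corollary follows immediately, so I would keep the write-up short and lean on \myref{LEMMA_RayProjection} as much as possible.
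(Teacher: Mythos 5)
Your proposal is correct and follows essentially the same route as the paper's proof: pull back the finite edge set $F_\sim$ via $\pi^{-1}$ (finite by the fibre-finiteness observation), use edge-domination of $r$ by $v$ in $G$ to obtain a path avoiding $\pi^{-1}F_\sim$, and project it by $\pi$ to a walk in $(G/_{\sim})\setminus F_\sim$ reaching a tail of $r^\pi$. The paper handles your one flagged bookkeeping point the same way, by extending the path along the tail of $r$ (which avoids $\pi^{-1}F_\sim$) until it ends in a vertex whose class lies on $r^\pi$, exactly as in your first suggested resolution.
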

\begin{proof}
    Suppose $v\in \mathrm{V}(G)$ edge-dominates a ray $r$ in $G$. We claim that $[v] = \{v\}$ edge-dominates the ray $r^\pi$ given by \myref{LEMMA_RayProjection}. Indeed, suppose a finite $F\subset E(G/_{\sim})$ is given. Since $\pi^{-1}F\subset \mathrm{E}(G)$ is finite, we can find a $G$-path $P$ from $v$ to the tail of $r$ in $G\setminus \pi^{-1}F$ which avoids $\pi^{-1}F$ (by extending $P$ along $r$ if necessary, we may assume that $P$ ends in a vertex $v_n$ such that $[v_n]\in r^\pi$). Thus, $\pi[\mathrm{V}(P)]$ is a $(G/_{\sim})$-walk attesting that $[v]$ and a tail of $r^\pi$ are in the same connected component of $(G/_{\sim})\setminus F$. 
\end{proof}

The following Lemma (as well as its consequences) will be crucial in the proof that $\bar{\pi}$ is always a homeomorphism.

\begin{lemma}\label{LEMMA_PiVertexSeparation}
    If $F\subset \mathrm{E}(G)$ separates $v$ from $u$ in $G$ and $F =\pi^{-1}(\pi[F])$, where 
    \[\pi[F] \doteq \set{\{[x],[y]\}:\{x,y\}\in F},\]
    then $\pi[F]$ separates $[v]$ from $[u]$ in $G/_{\sim}$.
\end{lemma}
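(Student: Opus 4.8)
The plan is to argue by contraposition and lift a path. Suppose $\pi[F]$ does \emph{not} separate $[v]$ from $[u]$ in $G/_{\sim}$; then there is a finite path $Q = \langle c_0, c_1, \dotsc, c_k\rangle$ in $(G/_{\sim})\setminus \pi[F]$ with $c_0 = [v]$ and $c_k = [u]$, each edge $\{c_j, c_{j+1}\}$ lying in $\mathrm{E}(G/_{\sim})\setminus \pi[F]$. The goal is to manufacture, from $Q$, a walk in $G\setminus F$ joining $v$ to $u$, which contradicts the hypothesis that $F$ separates $v$ from $u$. (Note in passing that this hypothesis already forces $[v]\neq [u]$: if $v\sim u$ then $v$ would be timid with $v\sim_E u$, and such a pair cannot be separated by any finite edge set.)

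First I would lift the edges of $Q$. For each $j$, the defining condition of $\mathrm{E}(G/_{\sim})$ yields representatives $x_j, y_j\in \mathrm{V}(G)$ with $[x_j] = c_j$, $[y_j] = c_{j+1}$ and $\{x_j, y_j\}\in \mathrm{E}(G)$. The point is that this lifted edge avoids $F$: its projection $\{[x_j],[y_j]\} = \{c_j, c_{j+1}\}$ is not in $\pi[F]$, and since $\{e'\in \mathrm{E}(G): \{[x],[y]\}\in \pi[F]\} = \pi^{-1}(\pi[F]) = F$ by the saturation hypothesis, we conclude $\{x_j, y_j\}\notin F$. (The definition of $\pi[F]$ alone already gives $\{x_j,y_j\}\notin F$; the saturation assumption is precisely what makes the passage $F \leftrightarrow \pi[F]$ exact, and in applications one arranges it by replacing a finite separating set by $\pi^{-1}(\pi[F])$, which stays finite and still separates.)

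Next I would stitch consecutive lifts together. The edge lifted at step $j-1$ ends at $y_{j-1}$ and the edge lifted at step $j$ begins at $x_j$, and $[y_{j-1}] = c_j = [x_j]$, so $y_{j-1}\sim x_j$. Here the dichotomy built into $\sim$ does the work. If $c_j = \{w\}$ is a non-timid (singleton) class, then $\sim$ is equality on it, so $y_{j-1} = x_j = w$ and no bridge is needed. If $c_j$ is a timid class, then $y_{j-1}\sim_E x_j$, and by the definition of $\sim_E$ the two vertices lie in the same connected component of $G\setminus F$ for this particular finite $F$, giving an $F$-avoiding bridging path between them. The endpoints are handled the same way: $v\sim x_0$ and $u\sim y_{k-1}$, so either $v$ (resp.\ $u$) is non-timid and coincides with $x_0$ (resp.\ $y_{k-1}$), or it is timid and a $\sim_E$-bridge in $G\setminus F$ connects it. Concatenating all lifted edges with these bridges produces a walk from $v$ to $u$ using only edges outside $F$, the desired contradiction.

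The main obstacle is exactly the stitching step, since the lifts $\{x_j,y_j\}$ are chosen independently and need not meet at the correct vertices. The resolution is structural rather than computational: the relation $\sim$ is designed so that edge-dominating vertices form singleton classes (forcing lifts to agree there automatically) while timid vertices are glued by $\sim_E$, which by definition supplies $F$-avoiding paths for \emph{every} finite $F$. One should also record the small homogeneity fact, noted just before the definition of $G/_{\sim}$, that $v\in \mathrm{t}(G)$ together with $u\sim_E v$ forces $u\in \mathrm{t}(G)$; this guarantees that each class $c_j$ is unambiguously either a timid class or a non-timid singleton, so the case split above is exhaustive and well-defined.
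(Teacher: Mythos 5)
Your proof is correct and follows essentially the same route as the paper's: both lift the edges of a quotient path to $G$-edges, stitch consecutive lifts with $F$-avoiding paths supplied by $\sim_E$ (or by equality on non-timid singleton classes), and concatenate into a $v$--$u$ walk that contradicts the separation by $F$; the only difference is that you phrase it as a contraposition while the paper lifts an arbitrary path and locates an edge of $F$ on it, which is the same argument. Your parenthetical observation that the saturation hypothesis $F=\pi^{-1}(\pi[F])$ is not actually needed for the lifted edge to avoid $F$ is also accurate.
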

\begin{proof}
    \sloppy Let $P = \seqq{[v], [w_1], [w_2],\dotsc, [w_n], [u]}$ be a $(G/_{\sim})$-path and consider the set $\pi^{-1}\mathrm{E}(P)\subset\mathrm{E}(G)$. 
    
    Fix $e_0=\{v',w_1'\}\in \pi^{-1}(\{[v], [w_1]\})\in \mathrm{E}(G)$. Then $v'\sim v$ and $w_1'\sim w_1$, so there are $G$-paths $P_0$ from $v$ to $v'$ and $Q_0$ from $w_1'$ to $w_1$ which avoid $F$. Now fix  $e_1=\{w_1'',w_2'\}\in \pi^{-1}(\{[w_1], [w_2]\})\in \mathrm{E}(G)$. Then $w_1''\sim w_1$ and $w_2'\sim w_2$, so there are $G$-paths $P_1$ from $w_1$ to $w_1''$ and $Q_1$ from $w_2'$ to $w_2$ which avoid $F$.

    By concatenating the obtained $G$-paths $P_0,Q_0,P_1, Q_1, \dotsc, P_n, Q_n$ we build a $G$-path $P'$ from $v$ to $u$ in $G$. Since $F$ separates the two vertices, $P'$ cannot avoid $F$. But the $G$-paths $P_0,Q_0, P_1, Q_1, \dotsc, P_n, Q_n$ do avoid $F$ by construction, so it follows that $e_i\in F$ for some $i\le n$. We thus conclude that $P$ does not avoid $\pi[F]$ (since $\pi e_i \in \mathrm{E}(P)\cap \pi[F]$).
\end{proof}

\begin{corollary}\label{LEMMA_PiRaySeparation}
    If $F\subset \mathrm{E}(G)$ separates $r_0$ from $r_1$ in $G$, $F =\pi^{-1}(\pi[F])$ and $r_0^\pi,r_1^\pi$ are rays of $G/_{\sim}$ as in \myref{LEMMA_RayProjection}, then $\pi[F]$ separates $r_0^\pi$ from $r_1^\pi$ in $G/_{\sim}$.
\end{corollary}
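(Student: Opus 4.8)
The plan is to deduce this corollary directly from \myref{LEMMA_PiVertexSeparation} by reducing the separation of rays to the separation of vertices. Recall that two rays $r_0,r_1$ being separated by $F\subset\mathrm{E}(G)$ means that their tails lie in different connected components of $G\setminus F$; equivalently, any vertex on a tail of $r_0$ is separated from any vertex on a tail of $r_1$ by $F$. So the first step is to observe that, since $F$ separates $r_0$ from $r_1$, we may pick vertices $v$ on a tail of $r_0$ and $u$ on a tail of $r_1$ lying in the respective components $C(F,[r_0])$ and $C(F,[r_1])$, and then $F$ separates $v$ from $u$ in $G$.

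Next I would apply \myref{LEMMA_PiVertexSeparation} with these $v$ and $u$: since by hypothesis $F=\pi^{-1}(\pi[F])$, that lemma yields that $\pi[F]$ separates $[v]$ from $[u]$ in $G/_\sim$. The remaining work is to promote this vertex-level separation back to a separation of the projected rays $r_0^\pi$ and $r_1^\pi$. The key fact here is that, by \myref{LEMMA_RayProjection}, $r_0^\pi$ is a ray contained in $\pi[\mathrm{V}(r_0)]$ and $r_1^\pi\subset \pi[\mathrm{V}(r_1)]$. Thus $[v]$ lies on (a tail of) $r_0^\pi$ up to the equivalence, and $[u]$ on $r_1^\pi$; more carefully, I would choose $v,u$ among the vertices $v_{n_k}$ whose classes actually appear in $r_0^\pi,r_1^\pi$ (this is possible because those vertices form a cofinal subsequence of a tail), so that $[v]\in\mathrm{V}(r_0^\pi)$ and $[u]\in\mathrm{V}(r_1^\pi)$ genuinely.

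The final step is a short argument by contradiction: if $\pi[F]$ did \emph{not} separate $r_0^\pi$ from $r_1^\pi$ in $G/_\sim$, then their tails would lie in a common component of $(G/_\sim)\setminus\pi[F]$, giving a $(G/_\sim)$-path avoiding $\pi[F]$ between a tail-vertex of $r_0^\pi$ and a tail-vertex of $r_1^\pi$. Connecting $[v]$ and $[u]$ to these tail-vertices along the respective rays $r_0^\pi,r_1^\pi$ (whose tails avoid $\pi[F]$) would then produce a $(G/_\sim)$-path from $[v]$ to $[u]$ avoiding $\pi[F]$, contradicting the separation of $[v]$ from $[u]$ established above. I expect the only delicate point to be the bookkeeping in choosing $v,u$ so that their classes truly sit on $r_0^\pi$ and $r_1^\pi$ rather than merely on their tails; once the correspondence between ray-tails under $\pi$ is set up via \myref{LEMMA_RayProjection}, the rest is a direct transfer through \myref{LEMMA_PiVertexSeparation}.
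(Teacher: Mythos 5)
Your proposal is correct and is precisely the routine transfer the paper intends: it states this result as an immediate corollary of \myref{LEMMA_PiVertexSeparation} with no written proof, the expected argument being exactly your reduction (pick $v$ on the $F$-avoiding tail of $r_0$ and $u$ on that of $r_1$ with $[v],[u]$ among the vertices $[v_{n_k}]$ of $r_0^\pi,r_1^\pi$, apply the vertex-separation lemma, and concatenate along the projected tails to derive a contradiction). Your flagged bookkeeping point is handled correctly, since by the construction in \myref{LEMMA_RayProjection} every edge of $r_i^\pi$ is the projection of an edge of $r_i$, so the saturation hypothesis $F=\pi^{-1}(\pi[F])$ (or simply finiteness of $\pi[F]$) guarantees the projected tails avoid $\pi[F]$ as your argument requires.
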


\begin{corollary}
    For every $[v]\in \mathrm{t}(G/_{\sim})$ and $u\in \mathrm{V}(G/_{\sim})$, 
    \[
        [v]\sim_{E(G/_{\sim})}[u] \implies [v]=[u].
    \]
\end{corollary}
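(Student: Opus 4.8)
The plan is to prove the statement by contraposition, reducing a separation of classes in $G/_{\sim}$ to a separation of representatives in $G$ so that \myref{LEMMA_PiVertexSeparation} can be applied. First I would observe that a timid $[v]$ in $G/_{\sim}$ forces $v$ to be timid in $G$: if $v$ were edge-dominant in $G$, then the corollary established just above (that an edge-dominant vertex of $G$ yields an edge-dominant singleton class in $G/_{\sim}$) would make $[v]=\set{v}$ edge-dominant in $G/_{\sim}$, contradicting $[v]\in\mathrm{t}(G/_{\sim})$. Hence, by the definition of $\sim$, we have $[v]=\set{u\in \mathrm{V}(G): u\sim_E v}$, so that $[v]=[u]$ is equivalent to $v\sim_E u$ in $G$. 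It therefore suffices to show that whenever $[v]\ne[u]$, i.e.\ $v\not\sim_E u$ in $G$ so that some finite $F\subset\mathrm{E}(G)$ separates $v$ from $u$, there is a finite edge-set separating $[v]$ from $[u]$ in $G/_{\sim}$, which is exactly $[v]\not\sim_{E(G/_{\sim})}[u]$.

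The core of the argument is to upgrade $F$ to a finite, $\pi$-saturated separator to which \myref{LEMMA_PiVertexSeparation} applies. Let $F_0\subseteq F$ be the edges of $F$ whose endpoints lie in distinct classes, and set $\hat F \doteq \pi^{-1}(\pi[F_0])$. Every element of $\pi[F_0]$ is a genuine edge of $G/_{\sim}$, and since each fibre $\pi^{-1}e$ over such an edge is finite, $\hat F$ is finite; it is $\pi$-saturated because $\pi[\pi^{-1}(S)]=S$ for $S=\pi[F_0]$, whence $\hat F = \pi^{-1}(\pi[\hat F])$. The delicate point, which I expect to be the main obstacle, is that I cannot simply saturate $F$ itself: the preimage of a collapsed pair $\set{[x]}$ coming from an internal edge of $F$ may consist of infinitely many internal edges of a timid class, so the naive saturation of $F$ could fail to be finite. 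This is precisely why I discard the internal part and must argue separately that $\hat F$, which need not contain the internal edges of $F$, still separates $v$ from $u$ in $G$.

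To verify this separation, I would suppose toward a contradiction that some $G$-path $Q$ from $v$ to $u$ avoids $\hat F$. Then every crossing edge of $Q$ avoids $F_0$, while any internal edge $\set{a,b}$ of $Q$ that happens to lie in $F$ joins two $\sim_E$-equivalent vertices (an internal edge with $a\ne b$ forces $a\sim b$ in the timid case, hence $a\sim_E b$). Because $F$ is finite and $a\sim_E b$, the vertices $a$ and $b$ lie in the same component of $G\setminus F$, so there is a $G$-path from $a$ to $b$ avoiding $F$, which I splice into $Q$ in place of that edge. After performing this replacement for every such edge, each remaining internal edge of $Q$ already avoids $F$ and each crossing edge avoids $F_0 = F\cap(\text{crossing edges})$; the resulting walk from $v$ to $u$ then avoids $F$ entirely, contradicting that $F$ separates $v$ from $u$. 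Hence $\hat F$ separates $v$ from $u$ in $G$.

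Finally, since $\hat F$ is finite, separates $v$ from $u$ in $G$, and satisfies $\hat F = \pi^{-1}(\pi[\hat F])$, \myref{LEMMA_PiVertexSeparation} yields that the finite set $\pi[\hat F]=\pi[F_0]$ separates $[v]$ from $[u]$ in $G/_{\sim}$, so $[v]\not\sim_{E(G/_{\sim})}[u]$, completing the contrapositive. The heart of the matter is thus the combination flagged above: controlling the internal edges of the timid classes so the saturated separator stays finite, and the rerouting step, which leans on the finiteness of $F$ together with the $\sim_E$-connectivity internal to each timid class.
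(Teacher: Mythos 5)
Your proof is correct and follows the route the paper intends: the statement is given there as an immediate corollary of \myref{LEMMA_PiVertexSeparation} with no written argument, and your contrapositive --- reduce via the preceding corollary to $v\in\mathrm{t}(G)$ so that $[v]\neq[u]$ means $v\nsim_E u$, take a finite separator $F$, discard its internal edges, saturate the crossing part to $\hat F=\pi^{-1}(\pi[F_0])$, and splice $\sim_E$-paths in place of internal edges to check $\hat F$ still separates $v$ from $u$ --- is precisely the verification the paper leaves implicit. In particular, your observation that naively saturating all of $F$ may fail to be finite (a timid class can carry infinitely many internal edges) is a genuine subtlety, and your rerouting argument together with the finiteness of the fibres $\pi^{-1}e$ over genuine edges of $G/_{\sim}$ handles it correctly.
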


Finally, the last ingredient for showing the upcoming \myref{THM_QuotientHomeo} is the following

\begin{lemma}\label{LEMMA_JustCombLemma}
    Suppose $G$ is graph, $\seq{v_n:n\in\omega}$ and $\seq{F_n:n\in\omega}$ are sequences such that, for every $n\in\omega$, 
    \begin{itemize}
        \item[(i)] The set of edges $F_n\subset \mathrm{E}(G)$ is finite;
        \item[(ii)] The set $F_n$ separates the vertex $v_{n+1}$ from $v_k$ for every $k\le n$;
        \item[(iii)] $F_n$ does not separate $v_{k}$ from $v_{m}$ for any $k>m>n$.
    \end{itemize}
     Then there is an infinite comb in $G$ with infinitely many teeth in $\set{v_n:n\in\omega}$.
\end{lemma}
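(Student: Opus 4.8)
The natural approach is to apply the star-comb lemma (\myref{starcomb}) to the vertex set $D \doteq \set{v_n : n\ge 1}$ and then to rule out the star alternative. First I would check that \myref{starcomb} can be applied at all, i.e. that $D$ lies in a single connected component of $G$: condition (iii) with $n=0$ says that $F_0$ does not separate $v_k$ from $v_m$ whenever $k>m>0$, so all of $v_1,v_2,\dotsc$ lie in one component of $G\setminus F_0$, hence in one component $C$ of $G$. Applying \myref{starcomb} inside the connected graph $C$ to the infinite set $D$, either we already obtain a comb with infinitely many teeth in $D$ — in which case we are done — or we obtain a star with some center $c$ and infinitely many tips $v_{n_0},v_{n_1},\dotsc$ (with $n_0<n_1<\cdots$) joined to $c$ by paths $\set{P_i}$ that meet pairwise only in $c$.

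To set up the exclusion of the star, for each $n$ I would let $C_n$ denote the connected component of $G\setminus F_n$ containing the tail $\set{v_m : m>n}$; this is well defined because, by (iii), no $F_n$ separates two tail vertices, so they all share a component. Condition (ii) then says precisely that $v_0,\dotsc,v_n\notin C_n$. The key elementary observation about the star is that its paths $P_i$ are pairwise \emph{edge}-disjoint (two paths sharing an edge would share both of its endpoints, but they share only $c$). Hence any fixed finite edge-set $F_N$ meets at most $|F_N|$ of the $P_i$, so infinitely many $P_i$ avoid $F_N$ entirely; selecting one such $P_i$ with $n_i>N$ (possible since $n_i\to\infty$) exhibits $c$ joined to a tail vertex inside $G\setminus F_N$, whence $c\in C_N$. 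Thus the center lies in $C_N$ for \emph{every} $N$.

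The remaining step — which I expect to be the main obstacle and the locus of any genuine subtlety — is to turn the surviving star into a contradiction. The plan is to produce, for some $N$, a low-index tip $v_a$ with $a\le N$ whose connecting path still avoids $F_N$; combined with a tail tip $v_b$ ($b>N$) whose path avoids $F_N$, this would place $v_a$ and $v_b$ in the same component of $G\setminus F_N$, and since (iii) keeps $v_b$ together with $v_{N+1}$ while (ii) separates $v_{N+1}$ from $v_a$, we would reach the desired contradiction. Extracting such a low-index tip is the delicate point, since a priori every low-index tip could route its path through $F_N$; making this go through appears to require playing the edge-disjointness of the star paths against all three hypotheses at once (in effect arguing that $c$ would edge-dominate the escaping sequence, which the separators $F_n$ were designed to forbid). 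I would therefore concentrate the main effort here, in excluding the star, and treat the comb branch together with the bookkeeping in the earlier steps as routine.
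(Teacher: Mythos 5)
Your preparatory steps are sound as far as they go: condition (iii) with $n=0$ does place all of $v_1,v_2,\dotsc$ in one component, and your observation that the star's paths are pairwise edge-disjoint correctly forces the center $c$ into the tail-component $C_N$ of $G\setminus F_N$ for every $N$. But the step you flag as ``the locus of any genuine subtlety'' is not merely delicate -- it is impossible, because the lemma as stated is false. Take $G=K_{1,\omega}$, the star with center $c$ and leaves $\set{v_n:n\in\omega}$, and set $F_n=\set{\{c,v_k\}:k\le n}$. Then (i)--(iii) all hold: deleting $F_n$ isolates $v_0,\dotsc,v_n$ one by one while leaving the entire tail $v_{n+1},v_{n+2},\dotsc$ attached to $c$. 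Yet $G$ is rayless, so it contains no comb at all. In this example $c$ lies in $C_N$ for every $N$, exactly as your second step predicts, and every low-index tip routes its star-path through $F_N$ -- the scenario you worried about is realized, so no amount of playing hypotheses (i)--(iii) against edge-disjointness can extract the low-index tip your final step requires.

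For comparison, the paper's own proof is no better off: it forms $H$ as the union of the components $C_n$ of $v_n$ in $G\setminus F_n$ and asserts, without further argument, that (iii) makes $H$ connected and that (ii) and (iii) exclude an infinite star inside $H$ with tips in $\set{v_n:n\in\omega}$; both assertions fail on the example above (under the literal definition of $C_n$ the set $H$ is an infinite independent set, hence not connected, while under the natural repair $c\in \mathrm{V}(H)$ and the forbidden star is present). The lemma is applied only inside \myref{THM_QuotientHomeo}, where the surrounding construction supplies extra structure that does block the star -- in the first case, every chosen vertex remains in the same component of $G\setminus F$ as infinitely many other chosen vertices for \emph{every} finite edge set $F$; in the second, the $v_n$ project onto consecutive vertices of a ray and the $F_n$ are minimal separators -- so a correct statement needs some such hypothesis added, or must weaken the conclusion to the star/comb alternative of \myref{starcomb}. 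Your instinct to concentrate the effort on excluding the star was exactly right; the honest outcome of that effort is a counterexample, not a proof.
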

\begin{proof}
    For each $n\in\omega$, let $C_n$ be the connected component of $v_n$ in $G\setminus F_n$. Let $H$ be the induced sub-graph of $G$ such that 
    \[\mathrm{V}(H) = \bigcup_{n\ge 1}C_n.\]

    Then condition (iii) implies that $H$ is connected. Furthermore, conditions (ii) and (iii) together tell us that there can be no infinite star inside $H$ with tips in $\set{v_n:n\in\omega}$. In this case, we can apply \myref{starcomb} to find the desired infinite comb.
\end{proof}

\begin{theorem}\label{THM_QuotientHomeo}
    For every graph $G$, $\bar{\pi}\colon \Omega_E(G) \to \Omega_E(G/_{\sim})$ is a homeomorphism.
\end{theorem}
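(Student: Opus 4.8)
The plan is to show that $\bar\pi$ is a continuous open bijection; since edge-end spaces are Hausdorff, this yields a homeomorphism. The device that unifies every step is the \emph{saturation} of a finite edge set: given a finite $F\subset \mathrm{E}(G)$, the set $F^{\mathrm{sat}}\doteq \pi^{-1}(\pi[F])$ is again finite (because each fibre $\pi^{-1}e$ is finite, as noted before \myref{LEMMA_RayProjection}), contains $F$, and satisfies $F^{\mathrm{sat}}=\pi^{-1}(\pi[F^{\mathrm{sat}}])$, which is exactly the hypothesis needed to invoke \myref{LEMMA_PiVertexSeparation} and \myref{LEMMA_PiRaySeparation}. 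Since enlarging a separator only refines components, replacing $F$ by $F^{\mathrm{sat}}$ never destroys a separation, so I may always assume the edge sets I work with are saturated.

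Continuity and injectivity then come out quickly. For continuity I take a basic open set $\Omega_E(F^\ast,\eta)$ of $\Omega_E(G/_{\sim})$ and exhibit its preimage as a union of the basic sets $\Omega_E(\pi^{-1}F^\ast,\varepsilon)$ over $\varepsilon\in\bar\pi^{-1}(\Omega_E(F^\ast,\eta))$: if two $G$-rays lie in the same component of $G\setminus\pi^{-1}F^\ast$, a connecting path projects under $\pi$ to a walk avoiding $F^\ast$, so their images lie in the same component of $(G/_{\sim})\setminus F^\ast$, while \myref{LEMMA_RayProjection} guarantees the projected rays track the originals. For injectivity, if $[r_0]_E\ne[r_1]_E$ then some finite $F$ separates $r_0$ from $r_1$ in $G$; saturating $F$ and applying \myref{LEMMA_PiRaySeparation} shows $\pi[F]$ separates $r_0^\pi$ from $r_1^\pi$ in $G/_{\sim}$, so $\bar\pi([r_0]_E)\ne\bar\pi([r_1]_E)$.

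Surjectivity is the heart of the argument. Given an edge-end of $G/_{\sim}$ represented by a ray $s=\seq{[w_0],[w_1],\dotsc}$, I lift each edge $\{[w_i],[w_{i+1}]\}$ to a $G$-edge $\{a_i,b_i\}$ with $a_i\in[w_i]$ and $b_i\in[w_{i+1}]$; since $b_i\sim a_{i+1}$ (they lie in a common $\sim$-class, which is either a singleton or a timid $\sim_E$-class), the vertices $b_i$ and $a_{i+1}$ share a component of $G$ and so can be joined by a $G$-path. Concatenating these yields an infinite connected subgraph meeting the pairwise distinct classes $[w_i]$, hence meeting infinitely many of them. From this subgraph I extract a genuine $G$-ray $r$ whose vertex set still meets infinitely many of the $[w_i]$: this is where \myref{LEMMA_JustCombLemma} enters, manufacturing an infinite comb (whose spine is the desired ray) from the separation pattern of the lifted vertices and thereby ruling out the degenerate possibility that straightening the walk collapses it into a rayless star centred at a single vertex (here the dispersedness of timid classes recorded before \myref{LEMMA_RayProjection} is essential). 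Once $r$ is obtained, $s$ passes through infinitely many vertices of $\pi[\mathrm{V}(r)]$, so the second clause of \myref{LEMMA_RayProjection} gives $s\sim_{E(G/_{\sim})}r^\pi$, i.e. $\bar\pi([r]_E)=[s]_{E(G/_{\sim})}$.

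Openness is then immediate from surjectivity together with the separation lemmas: for a saturated basic set $\Omega_E(F,\varepsilon)$ I claim $\bar\pi[\Omega_E(F,\varepsilon)]=\Omega_E(\pi[F],\bar\pi(\varepsilon))$. The inclusion $\subseteq$ is the continuity computation run forward; for $\supseteq$ I use surjectivity to write any point of the target as $\bar\pi([r]_E)$ and apply \myref{LEMMA_PiRaySeparation} contrapositively, since if $F$ separated $r$ from $\varepsilon$ in $G$ then $\pi[F]$ would separate $r^\pi$ from $\varepsilon^\pi$, contradicting membership in $\Omega_E(\pi[F],\bar\pi(\varepsilon))$. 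The main obstacle is precisely the extraction step inside surjectivity — guaranteeing that lifting a quotient ray produces an honest $G$-ray rather than a rayless subgraph — and taming it is exactly the role played by \myref{LEMMA_JustCombLemma} and the dispersedness of timid classes.
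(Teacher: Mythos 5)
Your treatment of injectivity, continuity and openness is correct and essentially identical to the paper's: the paper likewise works only with saturated sets $F=\pi^{-1}(\pi[F])$ (which, as you note, are cofinal among finite subsets of $\mathrm{E}(G)$, since each fibre $\pi^{-1}e$ is finite) and proves the single identity $\bar{\pi}[\Omega_E(F,[r]_{\mathrm{E}(G)})]=\Omega_E(\pi[F],[r']_{\mathrm{E}(G/_{\sim})})$, from which continuity and openness both follow once the map is known to be a bijection. The genuine gap is in surjectivity, which you rightly call the heart but do not actually carry out. To invoke \myref{LEMMA_JustCombLemma} you must exhibit a sequence $\seq{F_n:n\in\omega}$ of finite edge sets satisfying its hypotheses (i)--(iii), and ``the separation pattern of the lifted vertices'' does not supply one automatically. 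Two distinct vertices $[w_i]\neq[w_j]$ of $G/_{\sim}$ can have representatives with $w_i\sim_E w_j$ in $G$ --- this is exactly the situation where both are edge-dominating, since $\sim$ collapses only \emph{timid} $\sim_E$-classes --- and then \emph{no} finite $F\subset\mathrm{E}(G)$ separates them, so hypothesis (ii) is unsatisfiable for such pairs; and even when pairwise separators exist, hypothesis (iii) (that $F_n$ must not separate any two \emph{later} vertices from each other) fails for arbitrary choices of separators. The paper's proof is organized around precisely these two difficulties: it splits into the case where infinitely many of the $[v_n]$ accumulate (for every finite $F\subset\mathrm{E}(G/_{\sim})$, infinitely many remain in one component of $(G/_{\sim})\setminus F$), in which the $F_k$ are built recursively and each new vertex is chosen inside the connected component of the previous one in $G\setminus\bigcup_{k\le m}F_k$ so that (iii) holds, and the complementary case in which each $[v_n]$ admits a finite separator from all the others, taken \emph{minimal}, minimality being exactly what verifies (iii). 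None of this case analysis or construction appears, even in sketch form, in your proposal, so your extraction step does not go through as written.

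Two smaller inaccuracies in the same step. First, the dispersedness of timid classes is not what rules out the degenerate star inside \myref{LEMMA_JustCombLemma}: there, conditions (ii) and (iii) together exclude an infinite star with tips among the $v_n$; dispersedness is used earlier in the paper, to make the fibres $\pi^{-1}e$ finite and to prove \myref{LEMMA_RayProjection}. Second, the teeth of the comb sit at the ends of the attached disjoint paths, not on the spine, so the quotient ray $s$ need not pass through vertices of $\pi[\mathrm{V}(r)]$ and the second clause of \myref{LEMMA_RayProjection} does not literally apply; the equivalence $r^\pi\sim_{E(G/_{\sim})}s$ is instead obtained by projecting the comb's pairwise disjoint paths to walks in $G/_{\sim}$ avoiding any prescribed finite $F$ (the paper glosses this point in one line as well, so this is venial, but your phrasing presents as a citation what in fact requires a short argument).
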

\begin{proof}
    \begin{description}
        \item[$\bar{\pi}$ is injective:] It follows from \myref{LEMMA_PiRaySeparation} that if $r_0\nsim_{\mathrm{E}(G)} r_1$ in $G$, then $r_0'\nsim_{\mathrm{E}(G/_{\sim})} r_1'$, so 
        \[ 
            \bar{\pi}([r_0]_{\mathrm{E}(G)}) =  [r_0']_{\mathrm{E}(G/_{\sim})}\neq [r_1']_{\mathrm{E}(G/_{\sim})} = \bar{\pi}([r_1]_{\mathrm{E}(G)}).
        \]
    
        \item[{$\bar{\pi}$ is surjective:}] Let $r=[v_0],[v_1],[v_2],\dotsc$ be a ray in $G/_{\sim}$. 
        We separate the proof in two cases:
        \begin{itemize}
            \item Suppose first that there is an infinite $A \subset B$ such that, for every $n\in A$ and finite $F\subset \mathrm{E}(G/_{\sim})$, there are infinitely many $k\in A$ such that $[v_n]$ and $[v_k]$ are in the same connected component of $(G/_{\sim})\setminus F$. 
            
            Then we construct the sequences $\seq{v_{n_k}:k\in\omega}$ and $\seq{F_k:k\in\omega}$ in $G$ as in \myref{LEMMA_JustCombLemma} as it follows: start by fixing $n_0,n_1\in A$ with $n_0<n_1$. Since $v_{n_0}\nsim_{\mathrm{E}(G)}v_{n_1}$, we can find a finite $F_0\subset \mathrm{E}(G)$ separating $v_{n_0}$ from $v_{n_1}$.
            
            Suppose $\seq{v_{n_k}:k\le m}\in\mathrm{V}(G)^{m}$ and $\seq{F_k:k< m}$ are defined.  Let $C$ be the connected component of $G\setminus \bigcup_{k\le m}F_k$ containing $v_{n_m}$. By our assumption about $A$, the set 
            \[
                A' = \set{n\in A: v_n \in C}
            \]
            is infinite, so we may fix $n_{m+1}\in A'$ such that $n_{m+1}>n_m$, so that $v_{n_{m+1}}\in C$. Since $v_{n_{m+1}}\nsim_{\mathrm{E}(G)}v_{n_k}$ for every $k\le m$, we can find a finite $F_m\subset \mathrm{E}(G)$ separating $v_{n_{m+1}}$ from $v_{n_k}$ for every $k\le m$, which concludes our recursion.
    
            In this case, the spine of the infinite comb given by \myref{LEMMA_JustCombLemma} is clearly a ray $r_G$ in $G$ such that $r_G^\pi$ as given by \myref{LEMMA_RayProjection} is edge-equivalent to $r$ in $G/_{\sim}$.
    
            \item Otherwise, we may assume that each $[v_n]$ can be separated by a finite $F_n\subset \mathrm{E}(G/_{\sim})$ from $[v_k]$ for every $k\neq n$ (without loss of generality, we may assume that such $F_n$ is the minimal set of edges with such property, so that $F_n\setminus e$ does not separate $[v_n]$ from some other $[v_k]$). 
    
            We claim that the pair of sequences $\seq{v_n:n\in\omega}$ and $\seq{\pi^{-1}(F_n):n\in\omega}$ satisfies the conditions of \myref{LEMMA_JustCombLemma}. Indeed, conditions (i) and (ii) are obviously satisfied. Now note that $F_n$ cannot contain $\{[v_k], [v_k+1]\}$ for any $k>n$ (since this would contradict $F_n$'s minimality). Thus, it follows that, for all $k>m>n$, $F_n$ does not contain any edge from the $(G/_\sim)$-path $\seqq{[v_m], [v_{m+1}], \dotsc, [v_k]}$, so $v_{k}$ cannot be separated from $v_{m}$ by $\pi^{-1}(F_n)$.
            
            Once again, the spine of the infinite comb given by \myref{LEMMA_JustCombLemma} is clearly a ray $r_G$ in $G$ such that $r_G^\pi$ as given by \myref{LEMMA_RayProjection} is edge-equivalent to $r$ in $G/_{\sim}$.
        \end{itemize}

        \item[{$\bar{\pi}$ is continuous and open:}] Suppose a finite $F\subset \mathrm{E}(G)$ such that $F =\pi^{-1}(\pi[F])$ and a ray $r$ in $G$ are given. We claim that 
        \[
            \bar{\pi}[\Omega_E(F,[r]_{\mathrm{E}(G)})] = \Omega_E(\pi[F],[r']_{\mathrm{E}(G/_{\sim})}),
        \]
        which concludes the proof in view of $\bar{\pi}$'s surjectivity. Indeed, suppose that $\varepsilon\in \Omega_E(G/_{\sim})\setminus \bar{\pi}[\Omega_E(F,[r]_{\mathrm{E}(G)})]$. By $\bar{\pi}$'s surjectivity, there is some ray $r_0$ in $G$ such that $\pi([r_0]_{\mathrm{E}(G)}) = [r_0']_{\mathrm{E}(G/_{\sim})} = \varepsilon$. Since $\varepsilon \notin \bar{\pi}[\Omega_E(F,[r]_{\mathrm{E}(G)})]$, $F$ separates $r_0$ from $r$. It thus follows from \myref{LEMMA_PiRaySeparation} that $\varepsilon = [r_0']_{\mathrm{E}(G/_{\sim})}\notin \Omega_E(\pi[F],[r']_{\mathrm{E}(G/_{\sim})})$.
    
        Now suppose that $\varepsilon\in \bar{\pi}[\Omega_E(F,[r]_{\mathrm{E}(G)})]$. Then again, by $\bar{\pi}$'s surjectivity, there is some ray $r_0$ in $G$ such that $\pi([r_0]_{\mathrm{E}(G)}) = [r_0']_{\mathrm{E}(G/_{\sim})} = \varepsilon$ and $F$ does not separate $r$ from $r_0$. Thus, if $P$ is a $G$-path from the tail of $r$ to the tail of $r_0$ in $G\setminus F$, then $\pi[P]$ will be a $(G/_\sim)$-walk attesting that the tail of $r'$ and the tail of $r_0'$ in $(G/_{\sim})\setminus\pi[F]$ are in the same connected component. Thus, $\varepsilon = \pi([r_0]_{\mathrm{E}(G)}) = [r_0']_{\mathrm{E}(G/_{\sim})} \in  \Omega_E(\pi[F],[r']_{\mathrm{E}(G/_{\sim})})$.
    \end{description}
\end{proof}

\myref{THM_QuotientHomeo} will be useful because:
\begin{lemma}\label{LEMMA_AloneTimid}
    Suppose that $G$ is a graph and $v\in \mathrm{t}(G)$ is only $\sim_E$-equivalent to itself. Then $v$ has finitely many neighbors in each connected component of $G\setminus \{v\}$.
\end{lemma}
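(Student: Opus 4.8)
The plan is to argue by contradiction using the star-comb lemma. Suppose that $v$ has infinitely many neighbors contained in a single connected component $C$ of $G\setminus\{v\}$, and fix an infinite set $D\subset \mathrm{V}(C)$ consisting of such neighbors. Since the induced subgraph on $\mathrm{V}(C)$ is connected, I would apply \myref{starcomb} to this subgraph with the infinite set $D$, obtaining either a comb with infinitely many teeth in $D$ or a star with infinitely many tips in $D$. The strategy is to show that the comb case contradicts the timidity of $v$, while the star case contradicts the hypothesis that $v$ is $\sim_E$-equivalent only to itself. Note that every vertex appearing in such a comb or star lies in $C$, and hence is distinct from $v$; this observation is what makes each case plug into a different hypothesis.

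First I would handle the comb case. Here we have a spine ray $r$ inside $C$ together with pairwise disjoint legs $P_i$ running from $r$ to infinitely many teeth $d_i\in D$. For each $i$, prepending the edge $\{v,d_i\}$ to the leg $P_i$ produces a $v$--$r$ path. Since the legs are pairwise disjoint and none of them meets $v$ (they live in $C$), and since the edges $\{v,d_i\}$ are pairwise distinct, these $v$--$r$ paths are pairwise edge-disjoint. Consequently, no finite set of edges can separate $v$ from $r$, which means $v$ edge-dominates $r$; this contradicts $v\in\mathrm{t}(G)$.

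In the star case we instead have a center $c\in \mathrm{V}(C)$ and paths $Q_i$ from $c$ to infinitely many tips $d_i\in D$, meeting pairwise only at $c$. Appending the edge $\{d_i,v\}$ to each $Q_i$ yields infinitely many $c$--$v$ paths which, by the same edge-disjointness bookkeeping as above (the appended edges are incident to $v$ and so cannot lie inside $C$), are pairwise edge-disjoint. Thus no finite $F\subset \mathrm{E}(G)$ can separate $c$ from $v$, i.e.\ $c\sim_E v$. But $c\in C$ forces $c\neq v$, contradicting the assumption that $v$ is $\sim_E$-equivalent only to itself. Having reached a contradiction in both cases, the supposition fails and $v$ has only finitely many neighbors in each connected component of $G\setminus\{v\}$.

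I expect the only delicate point to be the passage from ``infinitely many edge-disjoint $v$--$r$ paths'' to ``$v$ edge-dominates $r$'' in the comb case: after deleting a finite edge set $F$, one must check that some surviving path actually attaches to the surviving tail of $r$. This is routine---since $F$ is finite it destroys only finitely many of the edge-disjoint paths and only a finite initial segment of $r$, so a surviving path meets a surviving tail---but it is precisely the step where \emph{edge}-disjointness (rather than mere vertex-disjointness) of the constructed paths is used. All remaining verifications are straightforward bookkeeping ensuring that the edges incident to $v$ never coincide with edges lying inside $C$.
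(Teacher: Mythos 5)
Your proof is correct and follows essentially the same route as the paper's: the paper likewise applies the star--comb lemma (\myref{starcomb}) to the infinite set of neighbors of $v$ inside a component $C$ of $G\setminus\{v\}$, concluding in the star case that the center is a vertex distinct from $v$ which is $\sim_E$-equivalent to $v$, and in the comb case that $v$ edge-dominates the spine (the paper states this as a contrapositive and leaves implicit the edge-disjointness bookkeeping and the surviving-tail check that you spell out). No changes are needed.
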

\begin{proof}
    We present a proof for the contrapositive. If $C$ is a connected component of $G\setminus \{v\}$ and the set $\mathrm{N}(v,C)$ of its neighbors in $C$ is infinite, then \myref{starcomb} tells us that we can find in $C$ either a star with infinitely many tips in $\mathrm{N}(v,C)$ or an infinite comb with infinitely many teeth in $\mathrm{N}(v,C)$. But the center of any such star would be a vertex different from $v$ which is $\sim_E$-equivalent to $v$ and the spine of any such comb would be a ray dominated by $v$. Thus, $v$ is either $\sim_E$-equivalent to some other vertex in $G$ or it is not timid.
\end{proof}

\begin{lemma}\label{LEMMA_DenseHConstruction}
    Suppose $G$ is a connected graph such that every $v\in \mathrm{t}(G)$ is $\sim_E$-equivalent only to itself. Then there is a connected induced subgraph $H\subset G$ such that
    \begin{itemize}
        \item[(a)] $G\setminus H$ is rayless;
        \item[(b)] every connected component of $G\setminus H$ is separated from $H$ by a single vertex in $H$;
        \item[(c)] if $v\in \mathrm{t}(H)$ has infinite degree, then all but at most one connected component of $H\setminus \{v\}$ is non-rayless.
    \end{itemize}
\end{lemma}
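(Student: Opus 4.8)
The plan is to realize $H$ as the result of pruning $G$ of its rayless appendages, being careful not to prune so aggressively that a ray is consumed. Consider the family $\mathcal{H}$ of all connected induced subgraphs $H\subseteq G$ with the property that every connected component of $G\setminus H$ is rayless and meets $H$ in a single vertex. Any member of $\mathcal{H}$ automatically satisfies (a) and (b): a ray contained in $G\setminus H$ would lie in one of its components, contradicting raylessness (this gives (a)), while (b) is immediate from the definition. Moreover $G\in\mathcal{H}$ vacuously, so $\mathcal{H}\neq\emptyset$. One is tempted to extract $H$ as a $\subseteq$-minimal member of $\mathcal{H}$, but minimal members need not exist: for the one-way infinite path every tail $\seq{v_j,v_{j+1},\dotsc}$ lies in $\mathcal{H}$, and these form a strictly decreasing chain with empty intersection. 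Hence a direct appeal to Zorn's Lemma is unavailable, and the construction must instead be an explicit, controlled trimming.

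First I would run a transfinite recursion starting from $H_0=G$. At a successor stage, if the current $H_\alpha$ fails (c) --- that is, if there is $v\in\mathrm{t}(H_\alpha)$ of infinite degree such that $H_\alpha\setminus\{v\}$ has at least two rayless components --- I delete from $H_\alpha$ all but one of these rayless components, retaining $v$ and every non-rayless component, to obtain $H_{\alpha+1}$. Two observations make each such step benign. Since $v$ is retained, $H_{\alpha+1}$ stays connected, and each deleted component, being rayless and a component of $H_\alpha\setminus\{v\}$, re-enters the complement as a rayless piece attached at the single vertex $v$; so $H_{\alpha+1}\in\mathcal{H}$. More importantly, no ray is destroyed in a single step: a ray meets the cut vertex $v$ at most once, so a tail of it lies in a single component of $H_\alpha\setminus\{v\}$, necessarily a non-rayless one, which is never deleted. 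Here \myref{LEMMA_AloneTimid} does the decisive work: because $v$ is timid and $\sim_E$-equivalent only to itself, it has only finitely many neighbours in each component of $H_\alpha\setminus\{v\}$, so its infinite degree forces infinitely many components. Consequently, after deleting all but one rayless component, either $v$ is left with finitely many components and hence finite degree (so it no longer threatens (c)), or it retains infinitely many non-rayless components together with at most one rayless component, which is exactly (c) at $v$.

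The hard part will be the limit stages and the global coherence of the retained-component choices, precisely because unchecked trimming can consume a ray, as the one-way infinite path already warns: deleting successive initial segments eats the whole ray in the limit. The slack allowing \emph{at most one} rayless component in (c) is what rescues this. At each processed vertex the retained rayless component should be the one lying on the side of the part of $H$ already committed to, which one can fix by rooting the recursion and processing vertices along a well-ordering compatible with a fixed connected spanning structure, always declaring the component on the root side to be the kept one. With such a coherent rule one shows that, for any fixed ray $r$, each trimming removes at most a finite segment of $r$ lying strictly on the non-tail side of the current cut vertex, and the retention rule prevents these finite bites from exhausting $r$; hence at a limit stage $H_\lambda=\bigcap_{\alpha<\lambda}H_\alpha$ still contains a tail of every ray, so its complement is rayless and $H_\lambda\in\mathcal{H}$. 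I would then argue termination: each step either reduces the degree of some infinite-degree timid vertex to a finite value or removes a rayless component without creating a new infinite-degree timid vertex, so the recursion stabilises at some $H=H_{\alpha^\ast}\in\mathcal{H}$ with no (c)-violation. A final technical point to dispatch is that timidity and $\sim_E$-equivalence are computed inside the shrinking $H_\alpha$ rather than in $G$, so one must verify that the hypotheses of \myref{LEMMA_AloneTimid} persist to each $H_\alpha$ --- namely that a vertex timid in $H_\alpha$ is $\sim_E$-equivalent in $H_\alpha$ only to itself; this should follow from the facts that $H_\alpha$ is induced and that we only ever delete rayless, singly-attached pieces. Once this is in place, (a) and (b) hold because $H\in\mathcal{H}$, and (c) holds because the recursion halted.
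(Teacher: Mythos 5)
Your architecture --- transfinite pruning with $H_\lambda=\bigcap_{\alpha<\lambda}H_\alpha$ at limit stages --- concentrates the entire difficulty of the lemma in exactly the two steps you leave as assertions, and that is a genuine gap. The sentence ``one shows that \dots the retention rule prevents these finite bites from exhausting $r$'' is the crux of this approach, not a routine verification: a priori a ray can be bitten at infinitely many successive cut vertices, a finite piece at a time, and be wholly consumed at a limit (your own one-way-path warning). Closing it requires, first, pinning down the retention rule (what is kept when the root-side component at $v$ is \emph{not} rayless? presumably: retain the root's component and delete \emph{all} rayless ones), and second, an actual argument, for instance: each $H_\alpha$ is closed under $G$-paths (deleted pieces are pairwise non-adjacent and each attaches to the survivors at a single vertex, using that the $H_\alpha$ are induced), so if a ray were bitten at cut vertices $u_1,u_2,\dotsc$ with the root outside each deleted component, then every $G$-path from the root to $u_i$ would have to pass through $u_{i+1}$, whence a fixed finite path from the root to $u_1$ would contain every $u_i$ --- a contradiction. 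The same unproved limit behaviour is silently reused in your termination claim: ``no new infinite-degree timid vertex is created'' needs both that timidity persists downward along the chain (domination witnesses between surviving vertices survive pruning, again by path-closure) and that a non-rayless component at an already-processed vertex never becomes rayless at a limit stage, which is precisely the finitely-many-bites statement you did not prove.

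For comparison, the paper sidesteps all of this by \emph{growing} $H$ rather than pruning $G$: it fixes one timid vertex $v_0$ of infinite degree (if none exists, $H=G$ works), runs a breadth-first recursion of length $\omega$ in which $H_{n+1}$ consists of the not-yet-exiled neighbours of $H_n$, and at each newly reached timid $v$ of infinite degree exiles the rayless components of $G\setminus\{v\}$ \emph{except the one containing $v_0$} --- your root-side retention rule, but applied once, computed in $G$ itself rather than in a shrinking graph. Since $H=\bigcup_{n\in\omega}H_n$ is an increasing union, connectivity and ray-preservation at the limit are trivial, and no transfinite stages, stabilisation argument, or appeal to \myref{LEMMA_AloneTimid} inside moving subgraphs is needed; properties (a)--(c) then follow from the recursion invariants together with the same path-closure observation. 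Your key idea (anchor the construction at a root and always spare the root-side component) coincides with the paper's, but the pruning formulation turns the limit stages and termination into the main content, and the proposal does not supply them.
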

\begin{proof}
    If $G$ has no timid vertex with infinite degree, then $H=G$ satisfies (a)--(c). So suppose that there is some vertex $v_0\in \mathrm{t}(G)$ with infinite degree. 
    
    We are going to construct a sequence $\seq{(H_n,G_n):n\in\omega}$ such that, for all $n,m\in\omega$, $H_n,G_n\subset\mathrm{V}(G)$ and
    \begin{itemize}
        \item[(i)] $H_n\cap G_m=\emptyset$;
        \item[(ii)] $G_n$ is rayless;
        \item[(iii)] every connected component of $G_n$ in $G$ has a unique neighbor in $G\setminus G_n$ and such neighbor is in $H_n$;
        \item[(iv)] if $u\in \mathrm{V}(G)$ is a neighbor in $G$ of some $v\in H_n$, then $v\in H_k\cup G_k$ for some $k\le n+1$;
        \item[(v)] the subgraph induced by $\bigcup_{k\le n}H_k$ in $G$ is connected;
        \item[(vi)] if $v\in H_n\cap \mathrm{t}(G)$ has infinite degree, then $G_{n+1}$ contains all but (at most) a single rayless connected component of $G\setminus\{v\}$.
    \end{itemize}

    Start by letting $H_0 = \{v_0\}$. In this case, let $G_0$ be the union of the connected components in $G\setminus \{v_0\}$ which are rayless. Then it is clear that $\langle (H_0,G_0)\rangle$ satisfies (i)--(vi). 

    Now suppose that $\seq{H_k:k\le n}$ and $\seq{G_k:k\le n}$ are defined. Let $H_{n+1}\subset \mathrm{V}(G)$ be the set of vertices which have some neighbor in $H_n$ and are not in $G_k$ for any $k\le n$. If there is no timid vertex with infinite degree in $H_{n+1}$, let $G_{n+1} = \emptyset$. Otherwise, for each $v\in H_{n+1}\cap \mathrm{t}(G)$ with infinite degree, let $G_{v}$ be the union of the connected components in $G\setminus \{v\}$ which are rayless, \emph{except for the one containing $v_0$, if there is such one}. At last, let $G_{n+1}$ be the union of such $G_v$'s.

    It is clear that (i)--(vi) are satisfied for $\seq{(H_k,G_k):k\le n+1}$.

    Let $H$ be the subgraph of $G$ induced by $\mathrm{V}(H) = \bigcup_{n\in\omega}H_n$. 
    
    \begin{description}
        \item[(a) and (b):] Suppose that $C$ is a connected component of $G\setminus H$. Note that, by (i) and (iv), $H = G\setminus \bigcup_{n\in\omega}G_n$. In this case, if $u\in C$, then there is an $n\in\omega$ such that $u\in G_n$. Since $G_n\cap \mathrm{V}(H)=\emptyset$, it follows that $C$ is a connected component of $G_n$ in $G$, which concludes the proof in view of (ii) and (iii). 
    \end{description}
        
    Note that, if $u,v\in \mathrm{V}(H)$ and $P$ is a $G$-path from $u$ to $v$, then (b) tells us that $P$ must be an $H$-path (for, if $P$ otherwise left $H$ at some point, it could only return to $H$ by passing through the same vertex from which it left, contradicting the assumption that $P$ is a path). 
        
    \begin{description}
        \item[(c):] Let $v\in \mathrm{t}(H)$. We claim that $v\in \mathrm{t}(G)$: indeed, suppose that $v\in \mathrm{V}(H)\setminus\mathrm{t}(G)$ and fix a ray $r$ in $G$ which is edge-dominated in $G$ by $v$. Note that (a) tells us that $H$ must contain some vertex of $r$ and (b) tells us that $r$ cannot leave $H$ from that point onward, so $H$ contains a tail of said $r$. 
        
        Let a finite $F\subset \mathrm{E}(H)$ be given. Since $F\subset \mathrm{E}(G)$, there must be some $G$-path $P$ connecting $v$ to the tail of $r$ in $G\setminus (F\cup F')$, where $F'$ is the (finite) set of edges of $r$ which are not in $H$. Our previous observation allows us to conclude that $P$ must be contained in $H$. Thus, $v$ dominates the tail of $r$ contained in $H$.

        The desired conclusion is thus reached in view of (iv) and (vi).
    \end{description}
\end{proof}

The proof of \myref{LEMMA_DenseHConstruction} shows that \myref{LEMMA_DenseHConstruction}(b) implies that $H$ is closed by $G$-paths and, hence, every $G$-ray $r$ has a tail in $H$ (let us denote its maximal tail contained in $H$ as $\restrict{r}{H}$). This fact will be useful for showing:

\begin{corollary}\label{COR_DenseEdgesOnDirections}
    Let $G$ be a graph. Then there is a connected graph $H$ such that $\Omega_E(G)\approx \Omega_E(H)$ and the cannonical embedding $\iota\colon \Omega_E(H)\to \mathcal{D}_E(H)$ has dense image.
\end{corollary}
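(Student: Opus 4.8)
The plan is to reduce to a setting where \myref{LEMMA_DenseHConstruction} applies and then read the density of $\iota$ off property (c) of the subgraph that lemma produces, building $H$ through a chain of homeomorphisms $\Omega_E(G)\approx\Omega_E(G_1)\approx\Omega_E(G_2)\approx\Omega_E(H)$. First I would make the graph connected without changing its edge-end space: add a new vertex $w$ joined by a single edge to one vertex of each connected component of $G$, giving a connected $G_1\supseteq G$. Because each component meets $w$ in exactly one edge, no ray can be rerouted between two tails of the same component through $w$, while two tails in distinct components are separated by deleting the two relevant $w$-edges; thus $w$ is timid, no new edge-end is created, and the natural identification of edge-ends is a homeomorphism $\Omega_E(G)\approx\Omega_E(G_1)$ (both being the topological sum of the components' edge-end spaces). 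I then pass to $G_2\doteq G_1/_{\sim}$, which stays connected and, by \myref{THM_QuotientHomeo}, satisfies $\Omega_E(G_1)\approx\Omega_E(G_2)$, with every timid vertex of $G_2$ being $\sim_E$-equivalent only to itself.

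Now \myref{LEMMA_DenseHConstruction} applies to $G_2$ and yields a connected induced subgraph $H\subset G_2$ satisfying (a)--(c). To obtain $\Omega_E(G_2)\approx\Omega_E(H)$ I would use the tail map $[r]_{\mathrm{E}(G_2)}\mapsto[\restrict{r}{H}]_{\mathrm{E}(H)}$: property (b) makes $H$ closed under $G_2$-paths and ensures every $G_2$-ray has a tail in $H$, so a finite $F\subset\mathrm{E}(G_2)$ separates two ray-tails in $G_2$ precisely when $F\cap\mathrm{E}(H)$ separates them inside $H$. This makes the tail map a bijection taking basic open sets to basic open sets, hence a homeomorphism. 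The same separation equivalence shows that for $u,v\in\mathrm{V}(H)$ one has $u\sim_E v$ in $H$ iff $u\sim_E v$ in $G_2$; together with the inclusion $\mathrm{t}(H)\subseteq\mathrm{t}(G_2)$ noted in the proof of \myref{LEMMA_DenseHConstruction}, this guarantees that every timid vertex of $H$ is still $\sim_E$-equivalent only to itself.

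Finally I would show $\iota\colon\Omega_E(H)\to\mathcal{D}_E(H)$ has dense image by checking that every basic open set $\mathcal{D}_E(F,\rho)$ (with $F\subset\mathrm{E}(H)$ finite) meets the image. If $\rho$ is represented by a ray (case (i) of \myref{directionrepresentation}) this is immediate, since then $\rho\in\iota(\Omega_E(H))\cap\mathcal{D}_E(F,\rho)$. Otherwise $\rho=\rho_v$ with $v\in\mathrm{t}(H)$ of infinite degree. Since $v$ is $\sim_E$-equivalent only to itself in $H$, \myref{LEMMA_AloneTimid} gives $v$ finitely many neighbors in each component of $H\setminus\{v\}$, so there are infinitely many such components, all but at most one being non-rayless by (c). Splitting $\mathrm{E}(H)$ into the edges at $v$ and the edge-sets of the components of $H\setminus\{v\}$, the finite set $F$ touches only finitely many of these components; hence some non-rayless component $C$ is untouched by $F$ and still joined to $v$ by an intact edge. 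A ray of $C$ then lies entirely inside the component $\rho_v(F)$ of $v$ in $H\setminus F$, and the direction it represents witnesses $\mathcal{D}_E(F,\rho_v)\cap\iota(\Omega_E(H))\neq\emptyset$. As $F$ and $\rho$ were arbitrary, the image is dense, which finishes the proof with this $H$.

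The main obstacle I anticipate is this final density step, whose delicate point is guaranteeing that a ray survives inside $\rho_v(F)$ after deleting the finite set $F$; this is exactly what the edge-partition argument secures, by combining the local finiteness furnished by \myref{LEMMA_AloneTimid} with property (c) to leave infinitely many untouched non-rayless components attached to $v$. By comparison, the homeomorphisms in the reduction are routine once the path-closedness of $H$ and the single-edge attachment of $w$ are in hand.
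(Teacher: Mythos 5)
Your proposal is correct and follows essentially the same route as the paper's proof: connect $G$ by a single new vertex attached once to each component, pass to the quotient $G/_{\sim}$ via \myref{THM_QuotientHomeo}, take $H$ from \myref{LEMMA_DenseHConstruction}, establish $\Omega_E(G)\approx\Omega_E(H)$ by the tail map using path-closedness from property (b), and derive density from \myref{directionrepresentation}, \myref{LEMMA_AloneTimid} and property (c). Your explicit check that timid vertices of $H$ remain $\sim_E$-equivalent only to themselves (needed to apply \myref{LEMMA_AloneTimid} to $H$) is a point the paper leaves implicit, and it is verified correctly.
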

\begin{proof}
    We can assume that $G$ is connected (for we can add a vertex to $G$ which is adjacent to exactly one vertex from each connected component of $G$, otherwise). Furthermore, in view of \myref{THM_QuotientHomeo}, we may assume that every $v\in \mathrm{t}(G)$ is $\sim_E$-equivalent only to itself. Thus, we may let $H$ be as in \myref{LEMMA_DenseHConstruction}. 

    Let us start by finding the homeomorphism between $\Omega_E(G)$ and $\Omega_E(H)$. In order to do this, note that if $r$ and $r'$ are two rays contained in $G$ which are edge-equivalent in $G$, then there are infinitely many pairwise edge-disjoint $G$-paths attesting such edge-equivalence in $G$. But \myref{THM_QuotientHomeo}(b) tells us that the infinitely many of these paths which connect $\restrict{r}{H}$ to $\restrict{r'}{H}$ must be contained in $H$, so $\restrict{r}{H}$ and $\restrict{r'}{H}$ are edge-equivalent in $H$.

    Thus, the map $\psi\colon \Omega_E(G) \to \Omega_E(H)$ such that $\psi([r]_{\mathrm{E}(G)}) = [\restrict{r}{H}]_{\mathrm{E}(H)}$ is well defined. Since $H$ is an induced subgraph of $G$, it is easy to see that $\psi$ is a bijection. We claim that for every finite $F\subset \mathrm{E}(G)$ and ray $r$ in $G$,
    \begin{equation}\label{EQ_CompactHHomeo0}
      \psi[\Omega_E(F,[r]_{\mathrm{E}(G)})] = \Omega_E(F\cap H,[\restrict{r}{H}]_{\mathrm{E}(H)}).  
    \end{equation}

    Indeed, suppose that $r'$ is a ray contained in the same component as $r$ in $G\setminus F$. Then $\restrict{r'}{H}$ is also in the same component as $\restrict{r}{H}$ in $H\setminus (F\cap H)$. By \myref{THM_QuotientHomeo}(b), a $G$-path between $\restrict{r}{H}$ and $\restrict{r'}{H}$ in $G\setminus F$ must also be contained in $H$. Thus,
    \[
        \psi([r']_{\mathrm{E}(G)}) = [\restrict{r'}{H}]_{\mathrm{E}(H)}\in \Omega_E(F\cap H,[\restrict{r}{H}]_{\mathrm{E}(H)}).
    \]

    Now suppose that $r'$ is a ray in $G$ such that $[\restrict{r'}{H}]_{\mathrm{E}(H)}\in \Omega_E(F\cap H,[\restrict{r}{H}]_{\mathrm{E}(H)})$.
    Then there is an $H$-path $P$ connecting $\restrict{r'}{H}$ to $\restrict{r}{H}$ which avoids $F\cap H$. Since $P$ is contained in $H$, it follows that $P$ also avoids $F$. Hence, $P$ attests that $r'$ and $r$ are in the same component of $G\setminus F$ and thus
    \[
        \psi([r']_{\mathrm{E}(G)}) = [\restrict{r'}{H}]_{\mathrm{E}(H)}\in \psi[\Omega_E(F,[r]_{\mathrm{E}(G)})].
    \]

    It is clear that \myref{EQ_CompactHHomeo0} implies that $\psi$ is open. Furthermore, if a finite $F\subset \mathrm{E}(H)$ and an $H$-ray $r$ are given, \myref{EQ_CompactHHomeo0} together with the fact that $\psi$ is a bijection tell us that 
    \[
        \psi^{-1}(\Omega_E(F,[r]_{\mathrm{E}(H)})) = \Omega_E(F,[r]_{\mathrm{E}(G)}),
    \]
    which concludes the proof that $\psi$ is a homeomorphism.

    At last, in order to show that $\iota\colon \Omega_E(H)\to \mathcal{D}_E(H)$ has dense image, it suffices to show that $\rho(F)$ is not rayless for every rayless direction $\rho\in\mathcal{D}_E(H)$ and finite $F\subset \mathrm{E}(G)$. So suppose that such rayless direction $\rho\in \mathcal{D}_E(H)$ and finite $F\subset \mathrm{E}(G)$ are given. By \myref{directionrepresentation}, there is a $v\in \mathrm{t}(H)$ of infinite degree such that $\rho = \rho_v$. In this case, by \myref{LEMMA_AloneTimid}, $H\setminus \{v\}$ must have infinitely many connected components and \myref{LEMMA_DenseHConstruction}(c) tells us that all but at most one of these connected components are non-rayless. Thus, there must be a non-rayless connected component $C$ of $H\setminus\{v\}$ with an edge $e$ in which $v$ is incident such that $e\notin F$ and $F\cap\mathrm{E}(C)=\emptyset$. Hence, $\rho(F) = \rho_v(F)\supset C$ is non-rayless.
\end{proof}

As a direct consequence, we get:

\begin{theorem}\label{THM_CompectEdgeEndDirection}
    The edge-end space of a graph $G$ is compact if, and only if, there is some connected graph $H$ such that $\Omega_E(G)\approx\Omega_E(H)\approx \mathcal{D}_E(H)$.
\end{theorem}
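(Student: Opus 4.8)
The plan is to treat the two implications separately, leaning entirely on the machinery already developed---most crucially \myref{COR_DenseEdgesOnDirections}---so that the theorem reduces to a short topological observation.

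The backward implication is immediate. Suppose there is a connected graph $H$ with $\Omega_E(G)\approx \Omega_E(H)\approx \mathcal{D}_E(H)$. Since $H$ is connected, \myref{COR_EdgeDirCompact} guarantees that $\mathcal{D}_E(H)$ is compact. As compactness is a topological invariant, $\Omega_E(G)$ is compact as well, and there is nothing further to verify.

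For the forward implication, I would assume $\Omega_E(G)$ is compact and first apply \myref{COR_DenseEdgesOnDirections} to produce a connected graph $H$ such that $\Omega_E(G)\approx \Omega_E(H)$ and the canonical embedding $\iota\colon \Omega_E(H)\to \mathcal{D}_E(H)$ has dense image. The goal is then to upgrade this dense embedding into a homeomorphism, i.e.\ to show that $\iota$ is surjective. Here I would invoke three facts simultaneously: $\Omega_E(H)\approx \Omega_E(G)$ is compact; $\iota$ is a continuous injection (by \myref{DEF_CanonicalEmbed}); and $\mathcal{D}_E(H)$ is compact Hausdorff (being $\approx \Omega(H')$ via \myref{edgedirectionsareendsofedgegraph}, together with \myref{COR_EdgeDirCompact} for compactness and the Hausdorffness of end spaces). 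A continuous map from a compact space into a Hausdorff space is closed, so $\iota[\Omega_E(H)]$ is a closed subset of $\mathcal{D}_E(H)$; being also dense, it must be the whole space, so $\iota$ is onto. A continuous bijection from a compact space to a Hausdorff space is a homeomorphism, whence $\Omega_E(H)\approx \mathcal{D}_E(H)$. Chaining the homeomorphisms gives $\Omega_E(G)\approx \Omega_E(H)\approx \mathcal{D}_E(H)$, as required.

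I expect no real obstacle in the argument itself: once \myref{COR_DenseEdgesOnDirections} is available, the theorem is just the ``dense plus closed equals everything'' observation combined with the standard compact-to-Hausdorff homeomorphism lemma. All the genuine difficulty has been front-loaded into the construction of the subgraph $H$ in \myref{LEMMA_DenseHConstruction} and \myref{COR_DenseEdgesOnDirections}, where density of the image had to be engineered by ensuring that every rayless direction $\rho_v$ still meets a non-rayless component after the removal of any finite edge-set. The only point demanding a moment's care is checking that the compactness hypothesis transports correctly along $\Omega_E(G)\approx \Omega_E(H)$ before the closedness step is applied.
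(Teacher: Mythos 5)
Your proposal is correct and takes essentially the same route as the paper: both directions rest on \myref{COR_EdgeDirCompact} (for the backward implication) and on \myref{COR_DenseEdgesOnDirections} (for the forward one), exactly as in the paper's proof. Your explicit unpacking---image of a compact space in the Hausdorff space $\mathcal{D}_E(H)$ is closed, closed plus dense gives surjectivity, and a continuous bijection from a compact space to a Hausdorff space is a homeomorphism---is precisely what the paper compresses into the single phrase that $\iota$ ``must be a homeomorphism,'' so there is no gap.
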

\begin{proof}
    It follows from \myref{COR_EdgeDirCompact} that if such an $H$ exists, then $\Omega_E(G)$ is compact.
    
    Now let $H$ be as in \myref{COR_DenseEdgesOnDirections}. Then, by \myref{COR_DenseEdgesOnDirections}, $\Omega_E(H)$ is compact and, thus, $\iota\colon \Omega_E(H)\to \mathcal{D}_E(H)$ must be a homeomorphism.
\end{proof}

\begin{rmk}\label{rmk1}
    Note that $\lbrace \Omega_E(G): G \text{ is a graph and } \Omega_E(G) \text{ is compact} \rbrace \subsetneq \mathcal{D}_E$. Indeed, consider $ G = \bigcup_{i \in \omega} R_i$ as the disjoint union of countable rays. Note that $ \Omega_E(G) = \mathcal{D}_E(G) $ is the discrete topological space $\omega$. Since $ \Omega_E(G) = \mathcal{D}_E(G) $ is not compact, it follows that $ \lbrace \Omega_E(G) : G \text{ is a graph and } \Omega_E(G) \text{ is compact}\rbrace \subsetneq \mathcal{D}_E $.
 
\end{rmk}

    
    \section{Timid-direction and timid-end spaces}
    \myref{THM_CompactCharacterization} characterizes compactness of an end space $\Omega(G)$ in terms of finite sets of vertices, which are the finite sets that are also used to define the ends and basic open sets of $\Omega(G)$. The same does not happen in \myref{compactnesscharacterization}: even though it can be seen as an analogue of \myref{THM_CompactCharacterization}, we use finite sets of edges to define the edge-ends and basic open sets in $\Omega_E(G)$, while the finite sets used to characterize compactness of these spaces are those of \emph{timid vertices}.

It is easy to check, however, that a finite $F\subset \mathrm{t}(G)$ cannot separate the $(G\setminus F)$-tails of two rays which are edge equivalent and, moreover, the sets 
\[
    \Omega_E(F, \varepsilon) = \set{\xi \in \Omega_E(G): \text{$\xi$ is contained in the same connected component of $G\setminus F$ as $\varepsilon$}}
\]
are open in $\Omega_E(G)$. 

This naturally raises a few questions: if we choose to define an end space using finite sets of timid vertices instead of finite sets of edges, what relation can we expect between such space and the edge-end space? And what can be said about directions which are defined similarly? We seek to answer these questions in this section.

\subsection{Timid-end spaces as edge-end spaces}\label{SecTimid-end1}

\paragraph{}
It is straightforward to define the timid-end space: let $\Omega_{\mathrm{t}}(G)$ denote the quotient of the set of $G$-rays by the $\mathrm
t$-equivalence in which two rays are identified if no finite set of timid vertices can separate tails of said two rays. The elements of $\Omega_{\mathrm{t}}(G)$ shall be called \emph{timid-ends} and the basic open sets $\Omega_{\mathrm t}(F,\varepsilon)$ in $\Omega_{\mathrm{t}}(G)$ are defined just as in $\Omega(G)$, but using finite subsets $F\subset \mathrm{t}(G)$ and timid-ends.

It is not hard to show that, given a graph $G$, its timid-end and its edge-end spaces can be different. For instance:

\begin{example}
    Let $K$ and $K'$ be two disjoint cliques of size $\aleph_0$, with fixed $v\in K$ and $v'\in K'$. Let $G$ be the graph such that 
    \begin{align*}
        \mathrm{V}(G) &= \mathrm{V}(K)\sqcup\mathrm{V}(K')\\
        \{u,w\}\in \mathrm{E}(G) &\iff \left(\{u,w\}\in \mathrm{E}(K)\sqcup\mathrm{E}(K')\right) \;\text{or}\; \left(u=v \text{ and } w=v'\right).
    \end{align*}
    
    Then $\Omega_E(G)$ is clearly the two-points discrete space. On the other hand, $G$ contains no timid vertex, so $\Omega_\mathrm{t}(G)$ is the trivial single-point space.
\end{example}

Nevertheless, we will show that the class of timid-end spaces is still the same as the class of edge-end spaces. In order to do so, let $\mathrm{E}_\mathrm{t}(G)\subset \mathrm{E}(G)$ be such that $e\in \mathrm{E}_\mathrm{t}$ if, and only if, some timid vertex is incident in $e$ (we simply write $\mathrm{E}_\mathrm{t}$ when there is no risk of confusion about which $G$ is being considered).

Consider the graph denoted by $G/[v_0]_E$, in which 
\begin{align*}
 \mathrm{V}(G/[v_0]_E&) = \{[v_0]_E\}\cup \mathrm{V}(G)\setminus [v_0]_E, \\
 \{w,u\}\in \mathrm{E}(G/[v_0]_E) &\iff \{w,u\}\in \mathrm{E}(G)\\
 \{[v_0]_E,u\}\in \mathrm{E}(G/[v_0]_E) &\iff \exists v\in[v_0]_E \left(\{v,u\}\in \mathrm{E}(G)\right)
\end{align*}
for all $u,w\in \mathrm{V}(G)\setminus [v_0]_E$ (where $[v_0]_E$ is the $\sim_E$-equivalence class of $v_0$, as described in \myref{EQ_VertexEdgeEqv}). 

Now let $\pi\colon \mathrm{V}(G) \to \mathrm{V}(G/[v_0]_E)$ be the quotient projection. Then it is easy to see that, given $e\in \mathrm{E}(G/[v_0]_E)$, the set
\[\pi^{-1}e \doteq \set{\{x,y\}\in \mathrm{E}(G): e = \{\pi(x),\pi(y)\}}\]
is finite. 

Consider the following generalization of the notion presented in \myref{EQ_VertexEdgeEqv}: given $u,v\in \mathrm{V}(G)$ and $A\subset \mathrm{E}(G)$, we write $u\sim_A v$ if, for every finite $F\subset A$, $u$ and $v$ share the same connected component in $G\setminus F$. In this case:

\begin{lemma}\label{LEMMA_TimidEqvClassQuot}
    Suppose $u\nsim_E v$. Then $u\sim_{E_\mathrm{t}} v$ in $G$ if, and only if, $[u]_E\sim_{E_\mathrm{t}} v$ in $G/[u]_E$.
\end{lemma}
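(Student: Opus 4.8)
The plan is to prove both implications by contraposition, transferring a finite separator between $G$ and $G/[u]_E$ along the quotient projection $\pi$. Two facts make this possible: $\pi^{-1}e$ is finite for every $e\in\mathrm{E}(G/[u]_E)$ (so $\pi$-preimages of finite edge sets stay finite), and projecting or lifting walks transfers connectivity exactly as in \myref{LEMMA_PiVertexSeparation}. The genuinely new content, beyond mere connectivity, is to check that the transferred separators remain inside $\mathrm{E}_{\mathrm t}$, that is, that they stay \emph{timid}; everything hinges on this.

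For the ``only if'' direction I would fix, by contraposition, a finite $\bar F\subset \mathrm{E}_{\mathrm t}(G/[u]_E)$ separating $[u]_E$ from $v$ in $G/[u]_E$, and set $F\doteq\pi^{-1}(\bar F)$. This $F$ is finite, and if some $G$-path $P$ joined $u$ to $v$ while avoiding $F$, then $\pi[P]$ would be a walk from $[u]_E$ to $v$ avoiding $\bar F$ (an edge of $\pi[P]$ lying in $\bar F$ would pull back into $F$), contradicting the choice of $\bar F$; hence $F$ separates $u$ from $v$. It then remains only to verify $F\subset\mathrm{E}_{\mathrm t}(G)$.

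For the ``if'' direction I would start from a finite $F\subset\mathrm{E}_{\mathrm t}(G)$ separating $u$ from $v$ and first \emph{saturate} it to $\hat F\doteq\pi^{-1}(\pi[F])$, which is still finite (a finite union of finite fibres) and still separates $u$ from $v$ (it only grew). Since $\hat F=\pi^{-1}(\pi[\hat F])$, the argument of \myref{LEMMA_PiVertexSeparation}, applied to this quotient, shows that $\pi[\hat F]$ separates $[u]_E$ from $v$ in $G/[u]_E$. Again it remains only to verify that $\pi[\hat F]\subset \mathrm{E}_{\mathrm t}(G/[u]_E)$, i.e. that saturation introduced no non-timid edges.

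Thus both directions reduce to a single \emph{timidity correspondence}: an edge $\bar e$ is timid in $G/[u]_E$ if and only if every edge of $\pi^{-1}\bar e$ is timid in $G$. Since an edge is timid exactly when one of its endpoints is, and since a whole $\sim_{\mathrm E}$-class is uniformly timid or non-timid (as $w\in\mathrm t(G)$ and $w\sim_{\mathrm E}x$ force $x\in\mathrm t(G)$), this reduces to the vertex statement: $x$ is non-timid in $G$ if and only if $\pi(x)$ is non-timid in $G/[u]_E$. For $x\notin[u]_E$ this is routine — a ray edge-dominated by $x$ meets $[u]_E$ only finitely often (otherwise $x\sim_{\mathrm E}u$), so it projects to a genuine ray still dominated by $x$, and conversely a quotient ray dominated by $x$ passes through the vertex $[u]_E$ at most once and lifts back through the class using $\sim_{\mathrm E}$-connectivity. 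The delicate case, and the main obstacle, is the contracted vertex itself: one must show that $[u]_E$ is non-timid in the quotient precisely when the vertices of $[u]_E$ are non-timid in $G$. Here I would use that $[u]_E$ is \emph{dispersed} (no comb with infinitely many teeth in $[u]_E$, exactly as argued for timid classes before \myref{LEMMA_RayProjection}), so that no $G$-ray meets $[u]_E$ infinitely often; this is what lets one project an edge-dominated ray to a ray, and lift a quotient ray back through the unique occurrence of $[u]_E$ via $G\setminus F$ paths furnished by $\sim_{\mathrm E}$, thereby closing the correspondence and completing both directions.
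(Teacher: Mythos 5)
Your overall architecture is exactly the paper's: both directions by contraposition, transferring a finite separator along $\pi$ and $\pi^{-1}$, using the finiteness of the fibres $\pi^{-1}e$ and the projection/lifting of walks (the paper's forward direction performs by hand the interpolation through $[u]_E$ that your appeal to \myref{LEMMA_PiVertexSeparation} encapsulates, and your saturation $\hat F = \pi^{-1}(\pi[F])$ is a harmless variant, since $\pi[\hat F]=\pi[F]$). Where you go beyond the paper is in trying to actually prove the timidity transfer that the paper merely asserts in one line (``$w$ is timid in $G$ if, and only if, $\pi(w)$ is timid in $G/[u]_E$''), and it is precisely there that your argument has a genuine gap: you claim $[u]_E$ is dispersed ``exactly as argued for timid classes before \myref{LEMMA_RayProjection}'', but that dispersedness argument uses the timidity of the class, and the lemma as stated places no such hypothesis on $u$. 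If $[u]_E$ is a non-timid class it need not be dispersed. Concretely, let $G$ consist of a $K_\omega$ (whose vertex set is exactly $[u]_E$), a vertex $w$ joined to $K_\omega$ by a single edge $e^\ast=\{c_0,w\}$, and a ray attached to $w$ by infinitely many disjoint paths, with all remaining vertices of finite degree. Then $G$-rays inside $K_\omega$ meet $[u]_E$ infinitely often, and the vertex-level correspondence you reduce everything to is false: every member of $[u]_E$ is non-timid in $G$, yet $[u]_E$ has degree one in $G/[u]_E$ and is therefore timid there. With $v=w$ this even refutes the stated biconditional itself: $u\sim_{\mathrm{E}_\mathrm{t}} w$ in $G$ (the path $\seqq{u,c_0,w}$ uses no edge incident to a timid vertex, so no finite $F\subset \mathrm{E}_\mathrm{t}(G)$ can break it), while the single edge $\{[u]_E,w\}$ lies in $\mathrm{E}_\mathrm{t}(G/[u]_E)$ and separates $[u]_E$ from $w$. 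So no argument can close your ``only if'' direction in that regime; the obstruction you correctly identified as the crux (the contracted vertex) is a real one.

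To be fair, the paper's own justification suffers from the same defect: its asserted equivalence between timidity of $w$ and of $\pi(w)$ fails for non-timid classes, as above. The resolution is that the lemma is only ever invoked (in \myref{LEMMA_TimidEdgeSeparation}) with $u$ timid; then every member of $[u]_E$ is timid, the class \emph{is} dispersed, and your treatment of the contracted vertex goes through exactly as you sketched: a quotient ray visits $[u]_E$ at most once and lifts through the class via $G\setminus F$-paths furnished by $\sim_E$ (using that an edge outside $\pi[F]$ has all its preimages outside $F$), while a $G$-ray meets $[u]_E$ only finitely often and so a tail projects. Your ordinary-vertex case is in fact fine unconditionally --- if $x\notin [u]_E$ edge-dominates a ray meeting $[u]_E$ infinitely often, then for every finite $F$ the vertex $x$ reaches the class, hence $u$, in $G\setminus F$, forcing $x\sim_E u$, a contradiction --- and likewise the direction ``$u\nsim_{\mathrm{E}_\mathrm{t}}v$ in $G$ implies $[u]_E\nsim_{\mathrm{E}_\mathrm{t}}v$ in $G/[u]_E$'' holds in general, since a non-timid class contributes no timid-incident edges of its own. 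The failure is confined to pulling a quotient separator back through a non-timid contracted vertex. Your proof (and the paper's) should therefore either carry the standing hypothesis $u\in \mathrm{t}(G)$, or restrict the biconditional to the directions and case actually used; under that hypothesis your plan is sound and coincides with the paper's.
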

\begin{proof}
    Suppose $u\nsim_{E_\mathrm{t}} v$ in $G$. Then there is a finite $F\subset E_\mathrm{t}(G)$ separating $u$ from $v$. Let
    \[\pi[F] \doteq \set{\{\pi(x),\pi(y)\}:\{x,y\}\in F}.\]

    Then, since $w\in \mathrm{V}(G)$ is timid in $G$ if, and only if $\pi(w)$ is timid in $G/[u]_E$, $\pi[F]\subset E_\mathrm{t}(G/[u]_E)$. 
    
    We claim that $\pi[F]$ separates $[u]_E$ from $v$: indeed, suppose that $P=\seqq{[u]_E, v_1, \dotsc, v}$ is a $(G/[u]_E)$-path. Fix $u'\in [u]_E$ such that $u'\sim_E u$ and $\{u',v_1\}\in \mathrm{E}(G)\setminus F$. Then there is a $G$-path $P'$ from $u$ to $u'$ which avoids $F$. Since there can be no $G$-path from $u$ to $v$ which avoids $F$, it follows that $\seqq{v_1, \dotsc, v}$ does not avoid $F$, in which case $P$ does not avoid $\pi[F]$, as desired.

    Now suppose $[u]_E\nsim_{E_\mathrm{t}} v$ in $G/[u]_E$. Then there is a finite $F\subset E_\mathrm{t}(G/[u]_E)$ which separates $[u]_E$ from $v$ in $G/[u]_E$. Consider 
    \[\pi^{-1}(F) \doteq \bigcup_{e\in F}\pi^{-1}e.\]
    Then, from what has already been discussed, $\pi^{-1}(F)$ is finite and contained in $E_\mathrm{t}(G)$.

    We claim that $\pi^{-1}(F)$ separates $u$ from $v$: indeed, suppose $P$ is a $G$-path from $u$ to $v$. Then the projection by $\pi$ of $P$ is a walk from $[u]_E$ to $v$. Since $F$ separates $[u]_E$ from $v$, the projection of $P$ must not avoid $F$. Thus, if $e\in F$ is also in the projection of $P$, then $\pi^{-1}e\cap \mathrm{E}(P)\neq \emptyset$, which concludes the proof.   
\end{proof}

\begin{lemma}\label{LEMMA_TimidEdgeSeparation}
    Suppose $u,v\in \mathrm{V}(G)$ are such that $u$ is timid and $u\nsim_E v$. Then $u\nsim_{E_\mathrm{t}} v$.
\end{lemma}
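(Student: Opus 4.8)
The plan is to reduce the statement to the quotient graph $G/[u]_E$ and there invoke \myref{LEMMA_AloneTimid}. Since $u\nsim_E v$, we have $v\notin [u]_E$, so $v$ is a vertex of $G/[u]_E$ distinct from $[u]_E$; moreover, by \myref{LEMMA_TimidEqvClassQuot} it suffices to prove that $[u]_E\nsim_{E_\mathrm{t}} v$ in $G/[u]_E$. So the whole problem becomes: exhibit a finite set of timid edges of $G/[u]_E$ separating $[u]_E$ from $v$.

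First I would check that the vertex $[u]_E$ is timid in $G/[u]_E$ and that it is $\sim_E$-equivalent only to itself there. Timidity is preserved by $\pi$ (a fact already used in the proof of \myref{LEMMA_TimidEqvClassQuot}: a vertex $w$ is timid in $G$ if and only if $\pi(w)$ is timid in $G/[u]_E$), so $[u]_E=\pi(u)$ is timid. For the second property, let $w\neq [u]_E$ be any other vertex of the quotient; then $u\nsim_E w$ in $G$, and I would show that this separation survives the quotient, i.e. $[u]_E\nsim_E w$ in $G/[u]_E$. This is an exact adaptation of the argument in \myref{LEMMA_PiVertexSeparation}: take a finite $F\subset \mathrm{E}(G)$ separating $u$ from $w$, enlarge it to $\pi^{-1}(\pi[F])$ (still finite, since each $\pi^{-1}e$ is finite), and lift any $(G/[u]_E)$-path from $[u]_E$ to $w$ edge-by-edge to a $G$-path from $u$ to $w$, using that $[u]_E$ is $\sim_E$-connected to repair the pieces inside the class while avoiding $F$. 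The lifted path must cross $F$, and the crossing edge witnesses that the original quotient path meets $\pi[F]$; hence $[u]_E$ is alone in its $\sim_E$-class.

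With both hypotheses verified, \myref{LEMMA_AloneTimid} applies to $[u]_E$ in $G/[u]_E$ and yields that $[u]_E$ has only finitely many neighbors in each connected component of $(G/[u]_E)\setminus\{[u]_E\}$. Let $K_v$ be the component of $v$ in $(G/[u]_E)\setminus\{[u]_E\}$ and let $N$ be the (finite) set of edges joining $[u]_E$ to $K_v$. Every edge of $N$ is incident to the timid vertex $[u]_E$, so $N\subset \mathrm{E}_\mathrm{t}(G/[u]_E)$. Finally I would verify that $N$ separates $[u]_E$ from $v$: any path from $[u]_E$ to $v$ must eventually enter $K_v$, and since distinct components of $(G/[u]_E)\setminus\{[u]_E\}$ communicate only through $[u]_E$, the only edges entering $K_v$ from outside lie in $N$. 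Deleting $N$ therefore disconnects $v$ from $[u]_E$, so $[u]_E\nsim_{E_\mathrm{t}} v$, and \myref{LEMMA_TimidEqvClassQuot} gives $u\nsim_{E_\mathrm{t}} v$, as desired.

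The main obstacle is the verification that $[u]_E$ is alone in its $\sim_E$-class in the quotient, i.e. that separations survive $\pi$; the rest is bookkeeping. It is worth recording why a direct argument in $G$ (without passing to the quotient) stalls: applying \myref{starcomb} to the external neighbors of $[u]_E$ rules out a comb, since its spine would be a ray edge-dominated by $u$ and thus contradict timidity, but it leaves open the possibility of an infinite star centered inside $[u]_E$. Collapsing $[u]_E$ to a single vertex is precisely what converts this star into the infinite-degree configuration that \myref{LEMMA_AloneTimid} is designed to control.
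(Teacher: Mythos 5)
Your proof is correct, but it takes a genuinely different route from the paper's. Both arguments begin with the same reduction: the paper's proof also opens by invoking \myref{LEMMA_TimidEqvClassQuot} to assume, without loss of generality, that $u$ is $\sim_E$-equivalent only to itself. From there the paths diverge. The paper argues by contradiction: assuming $u\sim_{E_\mathrm{t}}v$, it recursively builds maximal families $\mathcal{P}_n$ of pairwise edge-disjoint paths together with separators $F_n$, assembles from them a connected, locally finite auxiliary graph $H$, and applies the Star-Comb Lemma to extract a ray that $u$ edge-dominates, contradicting timidity. You instead stay in the quotient $G/[u]_E$, verify the two hypotheses of \myref{LEMMA_AloneTimid} (timidity of $[u]_E$, which the paper itself asserts without proof inside \myref{LEMMA_TimidEqvClassQuot}, and alone-ness in the $\sim_E$-class, via your adaptation of \myref{LEMMA_PiVertexSeparation} --- correctly flagged as an adaptation, since that lemma is stated for $G/_\sim$ rather than $G/[u]_E$, though the lifting argument transfers verbatim because only one class is collapsed and it is an endpoint of every path considered), and then read off the separator directly: the finitely many edges from $[u]_E$ into the component $K_v$ are all timid edges, and since a path cannot revisit $[u]_E$, deleting them disconnects $[u]_E$ from all of $K_v$. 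This is sound and non-circular, as \myref{LEMMA_AloneTimid} is established in Section 3, independently of this lemma. What each approach buys: yours is shorter and constructive --- it exhibits the finite timid separator explicitly rather than deriving a contradiction, and it in fact proves slightly more, namely that a single finite $F\subset \mathrm{E}_\mathrm{t}$ separates $u$ from the entire connected component of $v$ in the quotient; the star-comb work is not avoided but outsourced to \myref{LEMMA_AloneTimid}, whose proof contains exactly the dichotomy (star center $\sim_E$-equivalent to $u$ versus comb spine dominated by $u$) that your closing remark correctly identifies as the obstruction to a direct argument in $G$. The paper's construction, by contrast, is self-contained at the cost of the more intricate recursion, and its burden of verifying that the quotient preserves timidity and $\nsim_E$ is the same as yours, merely left implicit in its ``without loss of generality.''
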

\begin{proof}
    Without loss of generality, in view of Lemma \ref{LEMMA_TimidEqvClassQuot}, we may assume that $u\nsim_E u'$ for every $u'\in \mathrm{V}(G)$.

    Striving for a contradiction, suppose that $u\sim_{E_\mathrm{t}}v$. We recursively construct a sequence $\seq{(P_n,F_n):{n\in\omega}}$ such that, for every $n\in\omega$, 
    \begin{itemize}
        \item[(i)] $F_n\subset \mathrm{E}(G)$ is finite;
        \item[(ii)] $\mathcal P_{n+1}$ is a finite set of paths from $u$ to $\mathrm{V}(P_n)$ with edges in $F_n$;
        \item[(iii)] $F_{n}$ separates $u$ from $\mathrm{V}(\mathcal P_n)$; 
        \item[(iv)] $u\sim_{E_\mathrm{t}} w$ in $G\setminus \left(\bigcup_{i< n}F_i\right)$ for some $w\in \mathrm{V}(\mathcal P_n)$. 
    \end{itemize}
    
    For starters, consider a maximal collection $\mathcal P_0$ of pairwise edge-disjoint paths from $u$ to $v$. Since $u\nsim_E v$, $\mathcal P_0$ is finite. 
    
    We claim that there is a $w\in \mathrm{V}(\mathcal P_0)$ such that $u\sim_{E_\mathrm{t}} w$ in $G\setminus \mathrm{E}(\mathcal P_0)$. Suppose not, i.e., that there is a finite $F\subset E_\mathrm{t}$ separating $u$ from $\mathrm{V}(\mathcal P_0)$ in $G\setminus \mathrm{E}(\mathcal P_0)$. Then such $F$ cannot separate $u$ from $\mathrm{V}(\mathcal P_0)$ in $G$, otherwise said $F$ would also separate $u$ from $v$ in $G$, contradicting our hypothesis. Thus, there must be some path $P$ in $G$ connecting $u$ to $\mathrm{V}(\mathcal P_0)$ which does not go through any edge in $F$. Let $w'$ be the first vertex from $\mathrm{V}(\mathcal P_0)$ which appears in $P$. Then the truncation of $P$ from $u$ to $w'$ does not go through any edge in $\mathrm{E}(\mathcal P_0)$, thus $F$ should break $P$, a contradiction. 
    
    Now, If $w\in \mathrm{V}(\mathcal P_0)$ is such that $u\nsim_{E_\mathrm{t}}w$, fix a finite $F_w\subset E_\mathrm{t}$ which separates $u$ from $w$ and then let
    \[
      F_0 = \mathrm{E}(\mathcal P_0)\cup \bigcup_{\substack{w\in \mathrm{V}(\mathcal P_0)\\w\nsim_{E_\mathrm{t}}u}}F_w.
    \]

    In this case, because $\bigcup_{\substack{w\in \mathrm{V}(\mathcal P_0)\\w\nsim_{E_\mathrm{t}}u}}F_w\subset E_\mathrm{t}$ is finite, there is still a $w\in \mathrm{V}(\mathcal P_0)$ such that $u\sim_{E_\mathrm{t}} w$ in $G\setminus F_0$. Thus, it is clear that (i)--(iv) hold so far.

    Suppose $(\mathcal P_i,F_i)_{i\le n}$  are defined in a way which satisfies (i)--(iv) thus far. Consider a maximal collection $\mathcal P_{n+1}$ of pairwise edge-disjoint paths from $u$ to $\mathrm{V}(\mathcal P_n)$ in $G\setminus \left(\bigcup_{i\le n}F_i\right)$. Since $u\nsim_E w$ for any $w\in \mathrm{V}(\mathcal P_n)$, $\mathcal P_{n+1}$ is finite.

    We claim that there is a $w\in \mathrm{V}(\mathcal P_n)$ such that $u\sim_{E_\mathrm{t}} w$ in $G\setminus \left(\mathrm{E}(\mathcal P_{n+1})\cup\bigcup_{i\le n}F_i\right)$. Suppose not, i.e., that there is a finite $F\subset E_\mathrm{t}$ separating $u$ from $\mathrm{V}(\mathcal P_{n+1})$ in $G\setminus \left(\mathrm{E}(\mathcal P_{n+1})\cup\bigcup_{i\le n}F_i\right)$. Then such $F$ cannot separate $u$ from $\mathrm{V}(\mathcal P_{n+1})$ in $G$, otherwise said $F$ would also separate $u$ from $v$ in $G$, contradicting our hypothesis. Thus, there must be some path $P$ in $G$ connecting $u$ to $\mathrm{V}(\mathcal P_{n+1})$ which does not go through any edge in $F$. Let $w'$ be the first vertex from $\mathrm{V}(\mathcal P_{n+1})$ which appears in $P$. Then the restriction of $P$ from $u$ to $w'$ does not go through any edge in $\mathrm{E}(\mathcal P_{n+1})$, thus $F$ should break $P$, a contradiction. 
    
    Now, If $w\in \mathrm{V}(\mathcal P_{n+1})$ is such that $u\nsim_{E_\mathrm{t}}w$, fix a finite $F_w\subset E_\mathrm{t}$ which separates $u$ from $w$ and then let
    \[
      F_{n+1} = \mathrm{E}(\mathcal P_{n+1})\cup \bigcup_{\substack{w\in \mathrm{V}(\mathcal P_{n+1})\\w\nsim_{E_\mathrm{t}}u}}F_w.
    \]

    In this case, because $\bigcup_{\substack{w\in \mathrm{V}(\mathcal P_{n+1})\\w\nsim_{E_\mathrm{t}}u}}F_w\subset E_\mathrm{t}$ is finite, there is still a $w\in \mathrm{V}(\mathcal P_{n+1})$ such that $u\sim_{E_\mathrm{t}} w$ in $G\setminus \left(\cup\bigcup_{i\le n+1}F_i\right)$. Thus, it is clear that (i)--(iv) hold so far and our recursion is done. Now consider the graph 
    \[
        H = \bigcup_{n\in\omega}\left(\bigcup\mathcal P_n\setminus \{u\}\right).
    \]
    
    We claim that $H$ is connected. Indeed, note that $v\in \mathrm{V}(H)$. We inductively show that if $w\in \mathrm{V}(\mathcal P_n)$ for some $n\in\omega$, then there is a concatenation of segments of paths in $\bigcup_{k\le n}\mathcal{P}_k$ connecting $w$ to $v$. Suppose that this is true for every $w\in \bigcup_{k\le n} \mathrm{V}(\mathcal P_k)$ and let $w\in \mathcal P_{n+1}$ be given. By construction of $\mathcal{P}_{n+1}$, $w\in \mathrm{V}(P)$ for some path $P$ from $u$ to some $w'\in \mathrm{V}(\mathcal P_n)$. Thus, concatenating the segment of $P$ starting at $w$ with the path $P'$ given by our induction hypothesis to $w'$ we reach the desired conclusion. 
    
    Moreover, $H$ is locally finite: indeed, it follows from (ii) and (iii) that $F_n$ separates $\mathrm{V}(\mathcal P_n)$ from $\mathrm{V}(\mathcal P_{n+1})$ in $H$. The Star-Comb Lemma shows us that $H$ contains some ray $r$ (since $H$ is infinite, by construction) which passes through infinitely many of the $\mathcal P_n$'s. Note that if $w\in \mathrm{V}(\mathcal P_n)\cap r$ and $w'\in \mathrm{V}(\mathcal P_m)\cap r$ for $n\neq m$, then there are $H$-paths $P\in \mathcal P_n$ and $P'\in \mathcal P_m$ passing through $w$ and $w'$, respectively. But, by construction, $\mathrm{E}(\mathcal P_n)\cap \mathrm{E}(\mathcal P_m)=\emptyset$, so it follows that $P$ and $P'$ are edge-disjoint. We at last conclude that $u$ dominates $r$, contradicting our hypothesis that $u$ is timid. 
\end{proof}

\begin{theorem}\label{THM_EdgeEndsHomeoTimidEnds}
    The class $\Omega_{\mathrm{t}}$ of timid-end spaces is the same as the class $\Omega_E$ of edge-end spaces. 
\end{theorem}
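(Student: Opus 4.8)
The overall plan is to establish the two class inclusions $\Omega_E\subseteq\Omega_{\mathrm t}$ and $\Omega_{\mathrm t}\subseteq\Omega_E$ by exhibiting, for each graph on one side, a graph realizing the same space on the other. The inclusion $\Omega_E\subseteq\Omega_{\mathrm t}$ admits a clean direct construction, whereas $\Omega_{\mathrm t}\subseteq\Omega_E$ is where the preceding lemmas on $\sim_{E_{\mathrm t}}$ do their work.

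For $\Omega_E\subseteq\Omega_{\mathrm t}$ I would fix a graph $G$ and let $H$ be its \emph{subdivision}: replace each edge $e=\{u,v\}$ by a path $u - m_e - v$ through a new vertex $m_e$, and subdivide each $G$-ray accordingly, obtaining a bijection $r\mapsto \hat r$ between $G$-rays and $H$-rays. Every midpoint $m_e$ has degree $2$, hence is timid in $H$, and every edge of $H$ is incident to some midpoint. I claim $\Omega_E(G)\approx\Omega_{\mathrm t}(H)$ via this bijection. If a finite $F\subseteq\mathrm{E}(G)$ separates the tails of $r_1,r_2$, then deleting the finite timid set $\{m_e:e\in F\}$ separates $\hat r_1$ from $\hat r_2$ in $H$; this gives $r_1\nsim_E r_2\Rightarrow \hat r_1\nsim_{\mathrm t}\hat r_2$. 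Conversely, if $r_1\sim_E r_2$ then a greedy construction yields infinitely many pairwise edge-disjoint $G$-paths joining their tails (at each stage the finite union of edges used so far fails to separate the tails, so a further avoiding path exists). Subdividing these paths and noting that any timid vertex of $H$ can lie on only finitely many of them — a midpoint on at most one, and an original timid vertex on finitely many, since infinitely many edge-disjoint paths through a vertex would make it edge-dominate $r_1$ — shows that no finite timid set of $H$ can destroy all of them, so $\hat r_1\sim_{\mathrm t}\hat r_2$. Matching the basic open sets $\Omega_E(F,\cdot)\leftrightarrow\Omega_{\mathrm t}(\{m_e:e\in F\},\cdot)$ then upgrades the bijection to a homeomorphism.

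For $\Omega_{\mathrm t}\subseteq\Omega_E$ the strategy is to pass through the auxiliary relation $\sim_{E_{\mathrm t}}$ and then realize it as a genuine edge-end space. First I would reduce to the case in which every timid vertex of $G$ is $\sim_E$-equivalent only to itself: by \myref{THM_QuotientHomeo} the quotient $G/_{\sim}$ preserves the edge-end space, and \myref{LEMMA_TimidEqvClassQuot} lets me track the relation $\sim_{E_{\mathrm t}}$ across the collapse, so the timid-end space is preserved as well. In the reduced graph I would prove $\Omega_{\mathrm t}(G)\approx\Omega_{E_{\mathrm t}}(G)$, i.e. that separating rays by finite sets of timid vertices is equivalent to separating them by finite sets of edges from $E_{\mathrm t}$. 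One direction is immediate: the (finitely many) timid endpoints of a finite $F\subseteq E_{\mathrm t}$ form a finite timid set whose deletion removes all of $F$, and any path avoiding those vertices avoids $F$. The reverse direction is the crux and is exactly where infinite-degree timid vertices must be tamed: here I would invoke \myref{LEMMA_AloneTimid}, which (in the reduced graph) guarantees that each timid $v$ has only finitely many neighbours in each component of $G\setminus\{v\}$, so the finitely many boundary edges between a separating timid set and the relevant component form a finite subset of $E_{\mathrm t}$ that already separates the two rays; \myref{LEMMA_TimidEdgeSeparation} ensures the $\sim_{E_{\mathrm t}}$- and $\sim_E$-classes of timid vertices agree, keeping this bookkeeping consistent.

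Finally I would realize $\Omega_{E_{\mathrm t}}(G)$ as an honest edge-end space $\Omega_E(H)$ by \emph{protecting} the non-$E_{\mathrm t}$ edges: for every edge $e=\{u,v\}$ with both endpoints edge-dominating I would make $u$ and $v$ infinitely edge-connected, for instance by adding infinitely many internally disjoint length-two paths between them through fresh degree-two vertices, which create no new rays. In the resulting $H$ no finite edge cut can separate the endpoints of a former non-$E_{\mathrm t}$ edge, so finite edge separation in $H$ reproduces finite $E_{\mathrm t}$-edge separation in $G$; matching basic open sets then gives $\Omega_E(H)\approx\Omega_{E_{\mathrm t}}(G)\approx\Omega_{\mathrm t}(G)$. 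The main obstacle throughout is precisely the mismatch between deleting a vertex and deleting its incident edges when that vertex is timid of infinite degree; this is the reason for first quotienting by $\sim$ and for the appeals to \myref{LEMMA_AloneTimid}, \myref{LEMMA_TimidEdgeSeparation} and \myref{LEMMA_TimidEqvClassQuot}, which together convert the unbounded vertex-separations into finite edge-separations.
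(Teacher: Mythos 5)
Your first inclusion ($\Omega_E\subseteq\Omega_{\mathrm t}$ via subdivision) is exactly the paper's construction, and your elaboration of it is sound. For the converse, your step (3) -- protecting each edge of $\mathrm{E}(G)\setminus \mathrm{E}_\mathrm{t}(G)$ by infinitely many internally disjoint length-two paths -- is precisely the graph $H$ the paper builds, and your step (2), translating finite timid-vertex separators into finite $E_\mathrm{t}$-edge separators via the boundary edges between the separating set $F$ and the component of one tail (finitely many by \myref{LEMMA_AloneTimid}, since each component of $G\setminus F$ lies in a single component of $G\setminus\{v\}$ for each $v\in F$), is correct in the reduced graph and is arguably cleaner than what the paper does at the corresponding point.

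The genuine gap is your step (1): the claim that $\Omega_{\mathrm t}(G)\approx \Omega_{\mathrm t}(G/_{\sim})$, which you justify by citing \myref{THM_QuotientHomeo} and \myref{LEMMA_TimidEqvClassQuot}. Neither result delivers this. \myref{THM_QuotientHomeo} preserves the \emph{edge}-end space (separation by finite edge sets), and \myref{LEMMA_TimidEqvClassQuot} concerns $\sim_{E_\mathrm{t}}$-separation between a \emph{pair of vertices} across the single-class quotient $G/[u]_E$ -- not ray separation by finite \emph{timid vertex} sets across the simultaneous collapse $G/_{\sim}$. The difficulty is concrete: a $\sim_E$-class $[v]$ of a timid vertex can be infinite (e.g.\ a rayless gadget of vertices pairwise joined by infinitely many internally disjoint length-two paths), so a downstairs separator $\{[v_1],\dotsc,[v_n]\}$ pulls back to the possibly infinite set $\bigcup_i[v_i]$ and cannot simply be lifted; conversely, lifting a downstairs path to an upstairs walk requires patching consecutive lifted edges by paths between $\sim_E$-equivalent vertices, and such patches avoid any prescribed finite \emph{edge} set but may pass through the forbidden timid \emph{vertices}. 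Moreover, since $r_1\sim_{\mathrm t} r_2$ does not imply $r_1\sim_E r_2$ (the paper's three-cliques example), you cannot produce infinitely many pairwise edge-disjoint connecting paths and pigeonhole your way around the classes. The only repair I see requires producing $\mathrm{E}_\mathrm{t}$-separators between timid vertices and dominant vertices or ray tails -- which is exactly the content of \myref{LEMMA_TimidEdgeSeparation}. Once you have that lemma at full strength, the global quotient buys you nothing: the paper dispenses with it entirely, defining $\psi\colon\Omega_{\mathrm t}(G)\to\Omega_E(H)$ directly on the unreduced graph and using \myref{LEMMA_TimidEdgeSeparation} (whose proof invokes \myref{LEMMA_TimidEqvClassQuot} only locally, to reduce WLOG to the case $u\nsim_E u'$ for all $u'$, where your \myref{LEMMA_AloneTimid}-style reasoning applies) to replace each non-$E_\mathrm{t}$ edge $\{x,y\}$ of a separator $F_u$ by a finite $F_e\subseteq\mathrm{E}_\mathrm{t}(G)$ separating the timid $u$ from the dominant $x$ and $y$. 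You should either restructure along those lines or supply an independent proof that the full quotient preserves the timid-end space; as written, the reduction is asserted on citations that do not support it.
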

\begin{proof}
    The inclusion $\Omega_E\subset \Omega_{\mathrm{t}}$ is easily realized by noting that, given a graph $G$, letting $H$ be the graph obtained by taking a single-vertex subdivision of every edge in $G$, $\Omega_{\mathrm{t}}(H) \approx \Omega_E(G)$.

    Now, for the remaining inclusion $\Omega_{\mathrm{t}}\subset \Omega_E$, let $G$ be a graph. For each edge $e = \{v_0,v_1\}\in \mathrm{E}(G)\setminus \mathrm{E}_\mathrm{t}(G)$, let $U_e = \set{u_n^e:n\in \omega}$ be an infinite set and  
    \[
    E_e = \set{\{u_n^e, v_i\}:n\in\omega, i\in \{0,1\}}.
    \]
    Then let $H$ be the graph, locally illustrated in \myref{FIGURE_Timid-to-Edges}, such that 
    \begin{align*}
        \mathrm{V}(H) &= \mathrm{V}(G)\cup \bigcup_{e\in E(G)\setminus E_\mathrm{t}(G)}U_e,\\
        \mathrm{E}(H) &= \mathrm{E}(G)\cup \bigcup_{e\in \mathrm{E}(G)\setminus \mathrm{E}_\mathrm{t}(G)}E_e.
    \end{align*}
    
    \begin{figure}[ht]
        \centering
        \begin{tikzpicture}
        \def\radius{0.15}
        \def\dy{2}
        \def\dx{0.75}
    
        \begin{scope}
            \node (v0G) at (2.5*\dx-0.25,\dy/2-0.15) {\small$v_0$};
            \node (v1G) at (2.5*\dx-0.25,-\dy/2+0.15) {\small$v_1$};
            
            \draw[->] (-0.5*\dx,\dy)-- (5.5*\dx,\dy);
            \draw[->] (-0.5*\dx,-\dy)-- (5.5*\dx,-\dy);
    
            \foreach \i in {0,...,5}
            {
                \draw (2.5*\dx,\dy/2) -- ({\i*\dx},\dy);
                \draw (2.5*\dx,-\dy/2) -- ({\i*\dx},-\dy);
            }

            \draw (2.5*\dx,\dy/2) -- (2.5*\dx,-\dy/2);
            \node (eG) at (2.5*\dx-0.15,0) {\small$e$};

            \draw[thick] (5.5*\dx+0.75,\dy+0.75)  rectangle (-0.5*\dx-0.25,-\dy-0.25);
            \node (G) at (5.5*\dx+0.5,\dy+0.5) {$G$};
        \end{scope}

        \draw[->, thick] (6-0.5,0) -- (6+0.5,0);

        \begin{scope}[shift = {(10*\dx,0)}]
            \node (v0H) at (2.5*\dx-0.25,\dy/2-0.15) {\small$v_0$};
            \node (v1H) at (2.5*\dx-0.25,-\dy/2+0.15) {\small$v_1$};
            
            \draw[->] (-0.5*\dx,\dy)-- (5.5*\dx,\dy);
            \draw[->] (-0.5*\dx,-\dy)-- (5.5*\dx,-\dy);
    
            \foreach \i in {0,...,5}
            {
                \draw (2.5*\dx,\dy/2) -- ({\i*\dx},\dy);
                \draw (2.5*\dx,-\dy/2) -- ({\i*\dx},-\dy);
            }

            \draw (2.5*\dx,\dy/2) -- (2.5*\dx,-\dy/2);
            \node (eH) at (2.5*\dx-0.15,0) {\small$e$};

            \draw[thick] (5.5*\dx+0.75,\dy+0.75)  rectangle (-0.5*\dx-0.25,-\dy-0.25);
            \node (G) at (5.5*\dx+0.5,\dy+0.5) {$H$};

            \foreach \i in {1,...,2}
            {
                \draw (2.5*\dx,\dy/2) -- (2.5*\dx+0.75*\dx*\i,0);
                \draw (2.5*\dx,-\dy/2) --  (2.5*\dx+0.75*\dx*\i,0);
            }
            \node (u0) at (2.5*\dx+0.75*\dx+0.15,0) {\tiny$u_0^e$};
            \node (u1) at (2.5*\dx+0.75*\dx*2+0.15,0) {\tiny$u_1^e$};
            \node (udots) at (2.5*\dx+0.75*\dx*3+0.15,0) {\tiny $\cdots$};
        \end{scope}
    \end{tikzpicture}
    \caption{Adjacent edge-dominant vertices in $G$ cannot be separated by finite edges in $H$.}
\label{FIGURE_Timid-to-Edges}
    \end{figure}
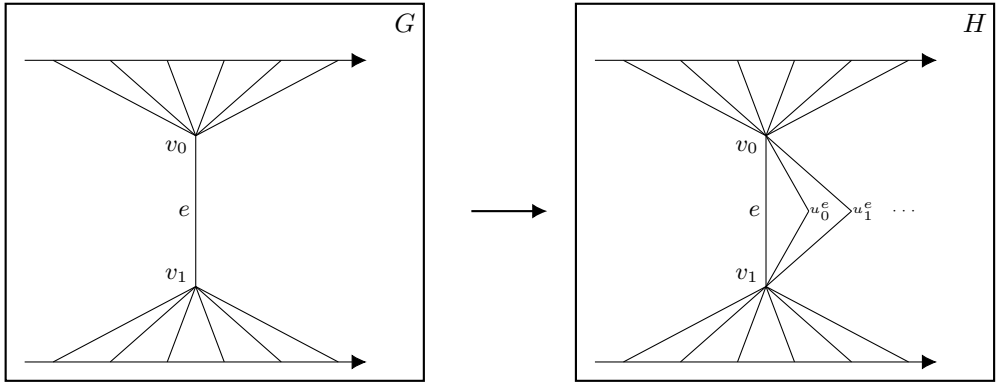
    
    Without any risk of confusion, we identify $G$ as an (induced) sub-graph of $H$.

    We claim that $\psi\colon \Omega_{\mathrm{t}}(G)\to \Omega_E(H)$ such that $\psi([r]_{\mathrm{t}(G)}) = [r]_{E(H)}$ is a homeomorphism. 
    \begin{description}
        \item[The association $\psi$ is a well defined map:] Suppose $r\sim_{T}r'$ in $G$ and a finite $F\subset \mathrm{E}(H)$ is given. Note that $F\setminus \mathrm{E}_\mathrm{t}(G)$ does not separate any component of $H$, so it suffices to assume that $F\subset \mathrm{E}_\mathrm{t}(G)$. Thus, if we let \[F' = \set{u\in \mathrm{t}(G): \text{ $u$ is incident in some $e\in F$}},\] we obtain a $G$-path $P$ between the $(G\setminus F')$-tails of $r$ and $r'$ which avoids $F'$ and, hence, $F$ as well.

        \item[The map $\psi$ is surjective:] Let $r$ be a ray in $H$. Then it is easy to see that there is a ray $r'$ in $G$ which passes through infinitely many vertices of $r$, thus $\psi(r') = [r']_{\mathrm{E}(H)} = [r]_{\mathrm{E}(H)}$.

        \item[The map $\psi$ is injective:] Let $r$ and $r'$ be rays in $G$ such that $r\nsim_{\mathrm{t}(G)} r'$ in $G$. Then there is a finite $F\subset \mathrm{t}(G)$ separating the $(G\setminus F)$-tails of $r$ and $r'$. Since each $u\in F$ is timid, there is a set $F_u\subset \mathrm{E}(G)$ separating $u$ from the tails of both $r$ and $r'$ in $G\setminus F_u$. Now, if $e=\{x,y\}\in F_u\setminus \mathrm{E}_\mathrm{t}(G)$, then $x$ and $y$ are dominant vertices, which in turn tells us that $u\nsim_{\mathrm{E}(G)} x$ and $u\nsim_{\mathrm{E}(G)} y$. Hence, by \myref{LEMMA_TimidEdgeSeparation}, we can find $F_e\subset \mathrm{E}_\mathrm{t}(G)$ separating $u$ from both $x$ and $y$. 
        
        We claim that 
        \[F' = \bigcup_{u\in F}\left((F_u\cap \mathrm{E}_\mathrm{t}(G))\cup \left(\bigcup_{e\in F_u\setminus \mathrm{E}_\mathrm{t}(G)}F_e\right)\right)\subset \mathrm{E}_\mathrm{t}(G)\]
        separates the $(H\setminus F)'$-tails of $r$ and $r'$. Indeed, let $P$ be an $H$-path between such tails. Then it is easy to see that $P$ must pass through some $u\in F$. But $F'$ separates $F$ from the $(G\setminus F)$-tails of both $r$ and $r'$, so $P$ cannot avoid $F'$. It follows that $\psi([r]_{T}) = [r]_{\mathrm{E}(H)} \neq [r']_{\mathrm{E}(H)} = \psi([r']_{T})$, so $\psi$ is injective.

        \item[The map $\psi$ is continuous:] Suppose a finite $F\subset \mathrm{E}(H)$ is given. Again, because $F\setminus E_\mathrm{t}(G)$ does not separate any component of $H$, it suffices to assume that $F\subset \mathrm{E}_\mathrm{t}(G)$. Let \[F' = \set{u\in \mathrm{t}(G): \text{ u is incident in some $e\in F$}},\] and suppose a ray $r$ in $G$ is given. Let $P$ be a $(G\setminus F')$-path between the tails of $r$ and some ray $r'$ in $G\setminus F'$. Since $P$ avoids $F'$, it also avoids $F$, so the $(G\setminus F)$-tails of $r'$ and $r$ are in the same connected component of $G\setminus F$.

        \item[The application $\psi$ is open:] Suppose a finite $F\subset \mathrm{t}(G)$ is given with $\varepsilon\in \Omega_{\mathrm{t}(G)}(G)$ and let $\xi\in \Omega_{E}(H)$ be such that $\xi\in \psi[\Omega_{\mathrm{t}(G)}(F,[r]_{\mathrm{t(G)}})]$. In view of $\psi$'s surjectivity, we may pick a ray $r$ inside the same component as $\varepsilon$ in $G\setminus F$ such that $[r]_{\mathrm{E}(H)} = \xi$. Since each $u\in F$ is timid in $G$, there is a set $F_u\subset \mathrm{E}(G)$ separating $u$ from $r$. Now, if $e=\{x,y\}\in F_u\setminus \mathrm{E}_\mathrm{t}(G)$, then $x$ and $y$ are edge-dominant vertices, which in turn tells us that $u\nsim_{\mathrm{E}(G)} x$ and $u\nsim_{E(G)} y$. Hence, by \myref{LEMMA_TimidEdgeSeparation}, we can find $F_e\subset \mathrm{E}_\mathrm{t}(G)$ separating $u$ from both $x$ and $y$ in $G$. 
        
        We claim that
        \[F' = \bigcup_{u\in F}\left((F_u\cap \mathrm{E}_\mathrm{t}(G))\cup \left(\bigcup_{e\in F_u\setminus \mathrm{E}_\mathrm{t}(G)}F_e\right)\right)\subset \mathrm{E}_\mathrm{t}(G)\]
        separates the $(H\setminus F')$-tail of $r$ from $F$ in $H$: indeed, suppose that $P$ is an $H$-path from the $(G\setminus F')$-tail of $r$ to $F$. By construction of $H$, there is a $G$-path $P'$ such that $\mathrm{E}(P)\cap \mathrm{E}_{\mathrm{t}(G)} = \mathrm{E}(P')\cap \mathrm{E}_{\mathrm{t}(G)}$. Since $F'$ separates $F$ from $(G\setminus F')$-tail of $r$ in $G$, $P'$ cannot avoid $F'$. But $F'\subset \mathrm{E}_{\mathrm{t}(G)}$, so $P$ cannot avoid $F'$ as well.

        Now suppose that $P$ is an $(H\setminus F')$-path from $v\in \mathrm{V}(H)$ to the $H\setminus F'$-tail of $r$. Then, since $F'$ separates the $(H\setminus F')$-tail of $r$ from $F$ in $H$, $P$ must avoid $F$ as well. Hence, the connected component in $H\setminus F'$ of the $(H\setminus F')$-tail of $r$ is contained in the connected component of $\psi(\varepsilon)$ in $H\setminus F$ and
        the proof is complete.
    \end{description}
\end{proof}

\subsection{Timid-direction spaces as timid-end spaces}\label{timid-direciton-end2}

\paragraph{} We now want to define timid-direction spaces and show that they are homeomorphic to their respective timid-end spaces -- but we will present such proof as a consequence of a result in a more general setting. 

For any $U \subset \mathrm{V}(G)$, it is also straight forward to define the following topological spaces:
\begin{itemize}
    \item[(i)] The \emph{$U$-direction space} $\mathcal{D}_U(G)$ where we only change $[\mathrm{V}(G)]^{< \aleph_0}$ in the definition of $\mathcal{D}(G)$ for $[U]^{< \aleph_0}$. The basic open sets $\mathcal{D}_U(F,\rho)$ here are defined just as in $\mathcal{D}(G)$ as well, but using finite subsets $F\subset U$ and $U$-directions.
    \item[(ii)] The \emph{$U$-end space} $\Omega_U(G)$ is the quotient of the $G$-rays by the $U$-ray equivalence, meaning, two rays are \emph{identified} when no finite subset of $U$ can separate these rays. The points of the space are the $U$-classes $[r]_U \in \Omega_U(G)$. Again, the basic open sets $\Omega_U(F,\varepsilon)$ here are defined just as in $\Omega(G)$, but using finite subsets $F\subset U$ and $U$-ends.
\end{itemize}

Given a ray $r$ in $G$, let $\rho_r\in \mathcal{D}_U(G)$ be such that $\rho_r(F)$ contains a tail of $r$ in $G\setminus F$ for every finite $F\subset U$. Note that if $r\sim_U r'$, then $\rho_r = \rho_{r'}$. Hence, the map $\rho_\bullet\colon \Omega_U(G)\to \mathcal{D}_U(G)$ such that $\varepsilon = [r]_U \mapsto \rho_r$ is well defined and we may write $\rho_\varepsilon$ for $\rho_r$.

Note that $\Omega_{\mathrm{t}}(G)= \Omega_{\mathrm{t}(G)}(G)$, so we let $\mathcal{D}_{\mathrm{t}}(G)\doteq \mathcal{D}_{\mathrm{t}(G)}(G)$. We now strive to characterize for which cases of $U\subset \mathrm{V}(G)$ the map $\rho_\bullet$ is a homeomorphism.

It should be clear that $\rho_\bullet$ is injective, continuous and open over its image (since $\rho_r\in \mathcal{D}_U(F,\rho_\varepsilon)$ if, and only if $[r]_U\in \Omega_U(F,\varepsilon)$ for every finite $F\subset U$). Thus:

 \begin{proposition}\label{PROP_UTopEmbed}
       The map $\rho_\bullet\colon \Omega_U(G)\to \mathcal{D}_U(G)$ is a topological embedding for any $U\subset \mathrm{V}(G)$.
 \end{proposition}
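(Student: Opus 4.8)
The plan is to verify that $\rho_\bullet$ is injective, continuous, and open onto its image; a map with these three properties is a homeomorphism onto its image, i.e. a topological embedding. The whole argument hinges on a single biconditional, which the statement already anticipates: for any ray $s$ representing $\xi = [s]_U$ and any finite $F\subset U$,
\[
    \rho_s(F) = \rho_\varepsilon(F) \iff \xi\in \Omega_U(F,\varepsilon).
\]
This is essentially definitional, since $\rho_s(F)$ is the component of $G\setminus F$ containing a tail of $s$ while $\rho_\varepsilon(F)$ is the component containing a tail of $\varepsilon$; the two components coincide exactly when the tails of $s$ and of $\varepsilon$ lie in a common component of $G\setminus F$, which is the defining condition for $\xi\in\Omega_U(F,\varepsilon)$. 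I would isolate this equivalence first and then invoke it repeatedly.

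For injectivity, I would suppose $\rho_r = \rho_s$. Then for every finite $F\subset U$ the tails of $r$ and $s$ lie in the same component of $G\setminus F$, which is precisely the assertion $r\sim_U s$; hence $[r]_U = [s]_U$. For continuity, it suffices to check preimages of the basic open sets $\mathcal{D}_U(F,\rho) = \set{\rho' : \rho'(F) = \rho(F)}$. Writing $C \doteq \rho(F)$ for the chosen component of $G\setminus F$, the preimage $\rho_\bullet^{-1}(\mathcal{D}_U(F,\rho))$ is exactly the set of $\xi = [s]_U$ whose tail lies in $C$. If $C$ contains a tail of some ray $r$, this preimage is the basic open set $\Omega_U(F,[r]_U)$; if $C$ contains no ray-tail, the preimage is empty. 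In either case it is open, so $\rho_\bullet$ is continuous.

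For openness onto the image, I would show that every basic open set satisfies $\rho_\bullet[\Omega_U(F,\varepsilon)] = \mathcal{D}_U(F,\rho_\varepsilon)\cap \mathrm{im}(\rho_\bullet)$, the right-hand side being relatively open in the image. Both inclusions are immediate from the biconditional above: an image element $\rho_\xi$ lies in $\mathcal{D}_U(F,\rho_\varepsilon)$ exactly when $\rho_\xi(F) = \rho_\varepsilon(F)$, i.e. exactly when $\xi\in\Omega_U(F,\varepsilon)$. Combining the three properties then yields that $\rho_\bullet$ is a homeomorphism onto its image.

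I do not expect a deep obstacle here; the one point that genuinely requires care is the continuity step, where the target basic open set $\mathcal{D}_U(F,\rho)$ is indexed by an \emph{arbitrary} direction $\rho$, whose chosen component $C = \rho(F)$ need not contain the tail of any ray. One must therefore phrase the preimage in terms of the component $C$ rather than a ray representative, and explicitly treat the empty-preimage case, rather than blindly asserting that the preimage is a basic open set of $\Omega_U(G)$.
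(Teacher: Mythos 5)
Your proposal is correct and follows essentially the same route as the paper, which justifies the proposition with precisely your central biconditional ($\rho_r\in \mathcal{D}_U(F,\rho_\varepsilon)$ if and only if $[r]_U\in \Omega_U(F,\varepsilon)$) and leaves injectivity, continuity and openness onto the image as immediate consequences. Your explicit treatment of the continuity step for an arbitrary direction $\rho$ --- where $\rho(F)$ may contain no ray tail, forcing the empty-preimage case --- is a detail the paper leaves implicit, but it does not change the argument.
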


It follows that $\rho_\bullet$ is a homeomorphism if, and only if, $\rho_\bullet$ is surjective. This is not the case for every $U\subset \mathrm{V}(G)$, though: 

\begin{example}\label{EX_rhobullNotSurj}
    Suppose $G$ is a star of rays with center vertex $c\in \mathrm
    V(G)$ and let $U = \mathrm V(G)\setminus \{c\}$. Let $\rho_c$ denote the $U$-direction such that $\rho_c(F)$ is the connected component containing $c$ (note that it is always infinite). It is easy to see that each $G$-ray can have its tail separated from $c$ by some $F\subset U$, so $\rho_c$ cannot be equal to $\rho_r$ for any $G$-ray $r$.
\end{example}

So let us determine exactly for which cases of $U\subset \mathrm{V}(G)$ the map $\rho_\bullet$ is surjective: in what follows, we say that a vertex $v\in \mathrm{V}(G)$ is $U$-timid in $G$ if for every $G$-ray $r$ there exists a finite $F\subset U\setminus \{v\}$  separating $v$ from the $(G\setminus F)$-tail of $r$. Furthermore, we will say that $v$ is $U$-dense if for every finite $F\subset U\setminus\{v\}$, $v$ is contained on an infinite connected component of $G\setminus F$. We will denote by $\partial \mathrm{t}_U$ the set of all vertices in $G$ which are both $U$-timid and $U$-dense. 

Note that the chosen set $U$ in \myref{EX_rhobullNotSurj} is such that $\partial\mathrm t_U = \mathrm
V(G) \not\subset U$. As the following theorem tells us, this is no coincidence.

\begin{theorem}\label{THM_piDSurj}
    Let $U\subset \mathrm{V}(G)$ be given. Then for every $\rho\in \mathcal{D}_U(G)$ there exists a ray $r$ in $G$ such that $\rho_r = \rho$ if, and only if, $\partial \mathrm{t}_U \subset U$.
\end{theorem}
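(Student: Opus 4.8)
The plan is to invoke \myref{PROP_UTopEmbed}, which says $\rho_\bullet$ is always a topological embedding, so the theorem reduces to showing that $\rho_\bullet$ is \emph{surjective} if, and only if, $\partial\mathrm t_U\subseteq U$. For the contrapositive of the forward implication, suppose there is a vertex $v\in\partial\mathrm t_U\setminus U$. Since $v\notin U$, no finite $F\subseteq U$ contains $v$, and since $v$ is $U$-dense the component of $G\setminus F$ containing $v$ is always infinite; hence the rule $\rho_v(F)\doteq$ (the component of $G\setminus F$ containing $v$) defines an element of $\mathcal D_U(G)$. If we had $\rho_v=\rho_r$ for some $G$-ray $r$, then for every finite $F\subseteq U=U\setminus\{v\}$ the vertex $v$ would share its component of $G\setminus F$ with the $(G\setminus F)$-tail of $r$, so no finite subset of $U\setminus\{v\}$ would separate $v$ from that tail — contradicting that $v$ is $U$-timid. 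Thus $\rho_v$ lies outside the image of $\rho_\bullet$, and the map is not surjective.

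For the reverse implication, assume $\partial\mathrm t_U\subseteq U$ and fix $\rho\in\mathcal D_U(G)$; I must find a $G$-ray $r$ with $\rho_r=\rho$. Consider the \emph{core} $D\doteq\bigcap_{F\in[U]^{<\aleph_0}}\rho(F)$, each $\rho(F)$ read as a vertex set. Because $\rho(\{u\})$ omits $u$ for every $u\in U$, we get $D\cap U=\emptyset$, and because each $\rho(F)$ is infinite, every $v\in D$ is $U$-dense. Now if some $v\in D$ is \emph{not} $U$-timid, there is a ray $r$ for which $v$ shares its component of $G\setminus F$ with the tail of $r$ for every finite $F\subseteq U=U\setminus\{v\}$; as both $\rho(F)$ and $\rho_r(F)$ are then the component of $G\setminus F$ containing $v$, they coincide and $\rho_r=\rho$. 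Conversely, a $U$-timid $v\in D$ would be $U$-dense, $U$-timid and outside $U$, i.e. an element of $\partial\mathrm t_U\setminus U$, contradicting the hypothesis. Hence whenever $D\neq\emptyset$ the direction $\rho$ is realised by a ray, and it only remains to handle $D=\emptyset$.

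When $D=\emptyset$ I build the ray by hand, using the elementary monotonicity principle that for finite $F,F'\subseteq U$, \emph{if $F\cap\rho(F')=\emptyset$ then $\rho(F')\subseteq\rho(F)$} (here $\rho(F')$ is a connected subset of $G\setminus F$ meeting the nonempty $\rho(F'\cup F)\subseteq\rho(F)$). Since $D=\emptyset$, each vertex $v_k$ admits a finite $F^{(k)}\subseteq U$ with $v_k\notin\rho(F^{(k)})$, so I recursively pick finite $S_0\subseteq S_1\subseteq\cdots$ in $U$ and $v_n\in\rho(S_{n-1})$ with $S_n\supseteq\bigcup_{k\le n}F^{(k)}$, guaranteeing $v_0,\dots,v_n\notin\rho(S_n)$ while $v_{n+1}\in\rho(S_n)$. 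The pair $\seq{v_n},\seq{S_n}$ satisfies the hypotheses of a vertex-analogue of \myref{LEMMA_JustCombLemma} ($S_n$ separates $v_{n+1}$ from every earlier $v_k$, yet leaves $v_k,v_m\in\rho(S_n)$ unseparated for $k>m>n$), so \myref{starcomb} produces an infinite comb whose spine is an honest $G$-ray $r$. If $\rho_r\neq\rho$, some finite $F^*\subseteq U$ puts the tail of $r$ in a component $C^*\neq\rho(F^*)$; but cofinitely many teeth $v_n$ attach to that tail along comb-paths missing the finite set $F^*$, giving $v_n\in C^*\setminus\rho(F^*)$ while $v_n\in\rho(S_{n-1})$. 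By the monotonicity principle this forces a fixed $u^*\in F^*\subseteq U$ into $\bigcap_n\rho(S_n)$.

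The step I expect to be the main obstacle is therefore to run this recursion so that $U\cap\bigcap_n\rho(S_n)=\emptyset$: the computation above shows that this single condition upgrades $\rho_r=\rho$ from the separators $S_n$ we explicitly manage to \emph{all} finite $F\subseteq U$ simultaneously. When $U\cap\rho(\emptyset)$ is countable this is clean — diagonalise, throwing the $n$-th listed vertex of $U\cap\rho(\emptyset)$ into $S_n$, so that every potential survivor $u^*$ is eventually cut off (note $u^*\in U$ is always cut by $\{u^*\}$). The delicate point is the general case: a vertex of $U$ that resists every separator we add is $U$-dense, and it is precisely the hypothesis $\partial\mathrm t_U\subseteq U$, together with \myref{starcomb}, that prevents such a vertex from obstructing the construction — either it can after all be separated (so the cutting succeeds), or it fails to be $U$-timid and hence, exactly as in the $D\neq\emptyset$ case, already dominates a ray realising $\rho$. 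This interplay between the core analysis and the Star–Comb construction is where the hypothesis is genuinely used, and it parallels the rôle Star–Comb plays in \myref{LEMMA_JustCombLemma} and \myref{THM_QuotientHomeo}.
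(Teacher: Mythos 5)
Your forward implication and your $D\neq\emptyset$ case are correct and essentially match the paper, which uses the same witness $\rho_v$ for the contrapositive and the same ``find a non-$U$-timid vertex in the core'' reduction. The genuine gap is exactly where you flag it: securing $U\cap\bigcap_n\rho(S_n)=\emptyset$. This condition is simply not achievable in general. Take $G=K_{\omega_1}$ and $U=\mathrm{V}(G)$: then $\partial \mathrm{t}_U=\emptyset\subset U$ and $D=\emptyset$, but $\rho(S_n)=G\setminus S_n$ for the unique direction $\rho$, so for \emph{any} increasing chain $\seq{S_n : n\in\omega}$ of finite subsets of $U$ the set $U\cap\bigcap_n\rho(S_n)=\mathrm{V}(G)\setminus\bigcup_n S_n$ is uncountable, no matter how you diagonalise. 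Your fallback dichotomy does not repair this: a persistent $u^*\in U$ is untouched by the hypothesis $\partial \mathrm{t}_U\subset U$, which constrains only vertices \emph{outside} $U$; the claim that such a $u^*$ is $U$-dense does not follow (you only know its behaviour along the chain, not for arbitrary $F\subset U$); and even if $u^*$ fails to be $U$-timid and dominates some ray $r$, concluding $\rho_r=\rho$ would require that $u^*$'s component of $G\setminus F$ \emph{equals} $\rho(F)$ for every finite $F\subset U$ --- information the chain never provides off of the $S_n$. The same defect infects your ``vertex-analogue of \myref{LEMMA_JustCombLemma}'': with vertex separators the star case is not excluded by conditions (ii)--(iii) alone, since the centre may sit inside some $S_m\subset U$ and still neighbour $\rho(S_n)$ for all $n$, a configuration your core $D$ (which by design satisfies $D\cap U=\emptyset$) cannot see.

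The paper's proof is built precisely to dodge this. It replaces your $D=\bigcap_F\rho(F)$ by $U^\rho=\bigcap_F\hat F$ with $\hat F=\mathrm{V}(\rho(F))\cup \mathrm{N}(F,\rho(F))$: the neighbour part catches vertices that keep touching $\rho(F)$ from within the separator (your persistent $u^*$, and would-be star centres), and, crucially, $U^\rho$ may meet $U$. This forces two cases with no counterpart in your outline: if $U^\rho\subset U$ is infinite, the ray is threaded directly through $U^\rho$ using the neighbour property (in $K_{\omega_1}$ one has $U^\rho=\mathrm{V}(G)$, and this is the case that actually realizes $\rho$); if $U^\rho$ is finite, one passes to $\rho(U^\rho)$ and assumes $U^\rho=\emptyset$, after which the separators $F_{n+1}$ are chosen \emph{inside} $U\cap\rho(F_n)$ with $v\notin\hat F_v$ and pairwise disjoint, so a distance-to-$F_0$ argument gives $\bigcap_n\hat F_n=\emptyset$ outright, with no countability hypothesis. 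That single emptiness both kills the star centre and upgrades $\rho_r=\rho$ from the chain to all finite $F\subset U$ --- the two places your argument leaks. To close your gap you would have to reintroduce the neighbour-closed intersection and the infinite-$U^\rho$ case, i.e.\ essentially rederive the paper's adaptation of Theorem 2.2 of \cite{DIESTEL2003197}.
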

\begin{proof}
    Suppose that there is some $v\in \partial \mathrm{t}_U\setminus U$. Then, since $v$ is $U$-dense, let $\rho_v$ be the direction which selects the connected component of $v$ in $G\setminus F$ for any finite $F\subset U$. Since $v$ is $U$-timid, for every $G$-ray $r$ there is some finite $F\subset U$ separating $v$ from the $(G\setminus F)$-tail of $r$, so there can be no ray $r$ in $G$ such that $\rho_r = \rho_v$.

    Now suppose that $\partial \mathrm{t}_U \subset U$ (in what follows we adapt the proof of Theorem 2.2 from \cite{DIESTEL2003197}). Let $\rho\in \mathcal{D}_U(G)$ be given. For each $F\subset U$, let us write $\hat{F} = \mathrm{V}(\rho(F))\cup \mathrm{N}(F,\rho(F))$ for the set of vertices in $\rho(F)$ and their neighbors in $F$. Thus, $\hat{F}\supset\hat{F'}$ whenever $F\subset F'$. Let
    \[
        U^\rho = \bigcap_{F\in [U]^{<\aleph_0}}\hat{F}.
    \]

    Note that if $v\in U^\rho$, then $v$ is $U$-dense: indeed, if there is some finite $F\subset U\setminus \{v\}$ such that the connected component of $G\setminus F$ containing $v$ is finite, then $\rho(F)$ must not contain $v$, so $v\notin U^\rho$. Hence, if $v\in U^\rho\setminus U$, since $\partial \mathrm{t}_U\subset U$, $v$ must not be $U$-timid. In this case, any $G$-ray $r$ which cannot be separated by any finite $F\subset U$ from $v$ will clearly be such that $\rho_r = \rho$. Thus, without loss of generality, we may assume that $U^\rho\subset U$.
    
    Suppose $U^\rho$ is infinite and fix $v_0\in U^\rho$. Since $U^\rho\subset \rho(\emptyset)$, which is a connected component of $G$, we may take $v_1\in U^\rho\setminus \{v_0\}$ as a vertex with a path $P_0 = \seqq{v_0, \dotsc, v_1}$ in $G$ such that $(\mathrm{V}(P_0)\setminus\{v_0,v_1\})\cap U^\rho = \emptyset$. For each $w\in \mathrm{V}(P_0)\setminus\{v_0,v_1\}$, since $w\notin U^\rho$, there must be a finite $F\subset U$ such that $w$ is not in $\rho(F_w)$. Let 
    \[F_1 = \{v_0,v_1\}\cup \bigcup_{w\in \mathrm{V}(P_0)\setminus\{v_0,v_1\}}F_w.\]
    
    The definition of $U^\rho$ implies that $v_1$ has some neighbor $u\in \rho(F_1)$. If $u\in U^\rho$, let $v_2 = u$. Otherwise, let $v_2\in \rho(F_1)\cap U^\rho$ be such that there is some path $P_1 = \seqq{u, \dotsc, v_2}$ in $\rho(F_0)$ with no intermediate vertex in $U^\rho$. By proceeding in this manner we construct a ray $r$ which passes through infinitely many vertices of $U^\rho$, so that $\rho_r = \rho$.

    At last, suppose that $U^\rho\subset U$ is finite. If $\rho(U^\rho)\cap U=\emptyset$, then any vertex $v\in \rho(U^\rho)$ is $U$-dense and it thus follows from the fact that $\partial \mathrm{t}_U\subset U$ that $v$ cannot be $U$-timid. In this case, any ray $r$ which cannot be separated from $v\in \rho(U^\rho)$ by a finite subset of $U$ will suffice to show that $\rho_{[r]}^U = \rho$. Hence, without loss of generality, we may assume that $U^\rho=\emptyset$, that $G$ is connected and that $U\neq\emptyset$ (by considering $\rho(U^\rho)$ in place of $G$). 
    
    We define an infinite sequence $\seq{F_n:n\in \mathbb{N}}$ of pairwise disjoint finite subsets of $U$ such that $\rho(F_n)$ contains both $F_{n+1}$ and $\rho(F_{n+1})$ (so that $F_{n+1}$ separates $F_n$ from $\rho(F_{n+1})$) as follows: first, fix any $F_0 = \{v_0\}\subset U$. Now suppose $\seq{F_i:i\le n}$ is already defined. Then for every $v\in F_n$, since $v\notin U^\rho$, there must be some $F_v\subset U\cap \rho(F_n)$ such that $v\notin \hat{F_v}$. In this case, let $F = \bigcup_{v\in F_n}F_v$ and
    \[F_{n+1} = \set{u\in F: \text{ $u$ neighbors someone in $F_n$}}\]
    and note that $\rho(F_{n+1}) = \rho(F)$ (since $F_{n+1}\subset F$ implies that $\rho(F)\subset \rho(F_{n+1})$ and $\rho(F)$ is already a connected component of $G\setminus F_{n+1}$). Obviously, $F_{n+1}\subset \rho(F_n)$ and, since $\rho(F)\subset G\setminus F_n$ by construction, $\rho(F_{n+1}) = \rho(F)\subset \rho(F_n)$ and our recursion is complete.

    Now let $v\in \rho(F_{n+1})$. Then every path from $v$ to $F_0$ must pass through $F_i$ for every $0<i\le n$ Thus, since $\seq{F_i:i\le n}$ are pairwise disjoint, the distance from $v$ to $F_0$ is at least $n$ and we can deduce that 
    \[
    \bigcap_{n\in\omega}\hat{F}_n = \emptyset.
    \]
    
    Pick a $v_n\in \rho(F_n)$ for each $n\in\omega$. We claim that there must be a comb with infinitely many teeth in $W = \set{v_n:n\in\omega}$: indeed, we infer from \myref{starcomb} that there exists either a star or a comb with infinitely many tips in $W$ -- however, there can be no such star, for its center would otherwise be in $\bigcap_{n\in\omega}\hat{F}_n$. 
    
    Let $r = \seq{u_n:n\in\omega}$ be the spine of said comb with infinitely many pairwise vertex-disjoint paths $\set{P_n=(u_{k_n}, \dotsc, v_{m_n}):n\in\omega}$ from $r$ to $W$ and suppose a finite $F\subset U$ is given. Choose $n\in\omega$ large enough so that $F\cap \rho(F_n)=F\cap \mathrm{V}(P_n)=\emptyset$ and $u_i\notin F$ for every $i\ge k_n$. Then $\rho(F_n)$ is a connected subgraph of $G\setminus F$, so the connected component of $G\setminus F$ containing it must be $\rho(F)$ (since $\emptyset\neq\rho(F\cup F_n)\subset \rho(F)\cap\rho(F_n)$). Thus, since $F\cap \mathrm{V}(P_n)=\emptyset$, $\mathrm{V}(P_n)\subset \rho(F)$. But $u_i\notin F$ for every $i\ge k_n$, so $\set{u_i:i\ge k_n}\subset \rho(F)$. This shows that $\rho_r^U = \rho$, which concludes the proof.
\end{proof}

\begin{corollary}\label{COR_rhoUHomeo}
    Provided that $\partial \mathrm{t}_U \subset U$, the map $\rho_\bullet \colon \Omega_U(G)\to \mathcal{D}_U(T)$ is a homeomorphism.
\end{corollary}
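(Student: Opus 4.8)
The plan is to obtain this statement as an immediate consequence of the two results that precede it, so there is essentially no genuine obstacle to overcome beyond assembling them correctly. First I would invoke \myref{PROP_UTopEmbed}, which guarantees that $\rho_\bullet\colon \Omega_U(G)\to \mathcal{D}_U(G)$ is a topological embedding for \emph{every} $U\subset\mathrm{V}(G)$, with no hypothesis on $U$. A topological embedding is by definition a homeomorphism onto its image; hence $\rho_\bullet$ is a homeomorphism onto all of $\mathcal{D}_U(G)$ if, and only if, its image is the whole space, i.e., if, and only if, $\rho_\bullet$ is surjective.

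The second step is to identify this surjectivity with the conclusion of \myref{THM_piDSurj}. Since $\rho_\bullet$ sends $[r]_U$ to $\rho_r$, its image is exactly $\set{\rho_r : r \text{ is a } G\text{-ray}}$; thus $\rho_\bullet$ is surjective precisely when every $\rho\in\mathcal{D}_U(G)$ equals $\rho_r$ for some $G$-ray $r$. By \myref{THM_piDSurj}, this latter condition holds under the assumed hypothesis $\partial\mathrm{t}_U\subset U$. Combining the two observations, $\rho_\bullet$ is then a surjective topological embedding, and therefore a homeomorphism, as claimed.

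The only point requiring care — and the closest thing to a ``main obstacle'' — is the hinge identification between surjectivity of $\rho_\bullet$ and the purely existential phrasing of \myref{THM_piDSurj} (``for every $\rho$ there is a ray $r$ with $\rho_r=\rho$''); once one observes that these are literally the same assertion about the image of $\rho_\bullet$, the rest of the argument is formal. Everything else follows from \myref{PROP_UTopEmbed} together with the standard fact that an injective, continuous, open-onto-its-image map which is also surjective is a homeomorphism.
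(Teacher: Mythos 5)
Your proof is correct and is precisely the argument the paper intends: the text immediately preceding \myref{THM_piDSurj} already reduces the question to surjectivity via \myref{PROP_UTopEmbed} (``It follows that $\rho_\bullet$ is a homeomorphism if, and only if, $\rho_\bullet$ is surjective''), and the corollary then follows from \myref{THM_piDSurj} under the hypothesis $\partial \mathrm{t}_U \subset U$, exactly as you assembled it. Your identification of the image of $\rho_\bullet$ with $\set{\rho_r : r \text{ a } G\text{-ray}}$ is the same (routine) hinge the paper relies on, so there is nothing to add.
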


\begin{corollary}[Theorem 2.2 in \cite{DIESTEL2003197}]\label{THM_DiestelSurj}
    For every graph $G$, the map $\rho_\bullet\colon \Omega(G)\to \mathcal{D}(G)$ is a homeomorphism.
\end{corollary}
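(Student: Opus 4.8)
The plan is to obtain this classical result (Theorem 2.2 of \cite{DIESTEL2003197}) as the trivial specialization $U = \mathrm{V}(G)$ of the general $U$-parametrized machinery just developed, so that essentially no new work is required. First I would record that, unwinding the definitions, the usual end and direction spaces are exactly the case $U = \mathrm{V}(G)$ of the constructions above: $\Omega(G) = \Omega_{\mathrm{V}(G)}(G)$ and $\mathcal{D}(G) = \mathcal{D}_{\mathrm{V}(G)}(G)$, and the map $\rho_\bullet\colon \Omega(G)\to\mathcal{D}(G)$ in the statement coincides with the map $\rho_\bullet\colon \Omega_U(G)\to\mathcal{D}_U(G)$ for this choice of $U$.

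With that identification in place, the homeomorphism claim splits into the two standard pieces already handled in full generality. By \myref{PROP_UTopEmbed}, the map $\rho_\bullet$ is a topological embedding for \emph{every} $U\subset\mathrm{V}(G)$, so the only remaining point is surjectivity. For surjectivity I would invoke \myref{THM_piDSurj}, which reduces the question entirely to verifying the combinatorial condition $\partial\mathrm{t}_U\subset U$.

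The key — and only — observation is that this condition is vacuous when $U = \mathrm{V}(G)$: by its very definition $\partial\mathrm{t}_{\mathrm{V}(G)}$ is a set of vertices of $G$, hence automatically contained in $\mathrm{V}(G) = U$. Thus \myref{THM_piDSurj} yields surjectivity, and combining this with the embedding property of \myref{PROP_UTopEmbed} shows that $\rho_\bullet$ is a homeomorphism; equivalently, this is precisely \myref{COR_rhoUHomeo} instantiated at $U = \mathrm{V}(G)$. I expect no genuine obstacle at this stage — all of the substantive work lies in the proof of \myref{THM_piDSurj}, in particular the star--comb recursion (via \myref{starcomb}) that manufactures a representing ray when $U^\rho$ is finite, while the present corollary merely records that the classical vertex-separator setting trivially satisfies the hypothesis of that theorem.
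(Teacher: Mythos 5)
Your proposal is correct and matches the paper's proof exactly: the paper also simply applies \myref{COR_rhoUHomeo} with $U=\mathrm{V}(G)$, where the hypothesis $\partial\mathrm{t}_U\subset U$ holds trivially since $\partial\mathrm{t}_{\mathrm{V}(G)}\subset \mathrm{V}(G)$ by definition. Your additional unpacking of the embedding (\myref{PROP_UTopEmbed}) plus surjectivity (\myref{THM_piDSurj}) is just a more explicit rendering of the same one-line argument.
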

\begin{proof}
    Apply \myref{COR_rhoUHomeo} to the case $U=\mathrm{V}(G)$. 
\end{proof}

And, as promised:

\begin{corollary}\label{COR_TimidDirectionsHomeoTimidEnds}
    For every graph $G$, the map $\rho_\bullet\colon \Omega_{\mathrm{t}}(G)\to \mathcal{D}_{\mathrm{t}}(G)$ is a homeomorphism.
\end{corollary}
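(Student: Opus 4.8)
The plan is to obtain this as the instance $U=\mathrm{t}(G)$ of \myref{COR_rhoUHomeo}. Since $\Omega_{\mathrm{t}}(G)=\Omega_{\mathrm{t}(G)}(G)$ and $\mathcal{D}_{\mathrm{t}}(G)=\mathcal{D}_{\mathrm{t}(G)}(G)$ by definition, the only thing that requires checking is the hypothesis of \myref{COR_rhoUHomeo}, namely that $\partial\mathrm{t}_{\mathrm{t}(G)}\subset \mathrm{t}(G)$. Because every vertex of $\partial\mathrm{t}_{\mathrm{t}(G)}$ is in particular $\mathrm{t}(G)$-timid, it suffices to prove the stronger statement that every $\mathrm{t}(G)$-timid vertex is already timid; density will play no role here.

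I would prove the contrapositive: if $v\in \mathrm{V}(G)$ is \emph{not} timid, then $v$ is not $\mathrm{t}(G)$-timid. So suppose $v$ edge-dominates some ray $r$. I claim that $r$ itself witnesses that $v$ fails to be $\mathrm{t}(G)$-timid, i.e.\ that no finite $F\subset \mathrm{t}(G)\setminus\{v\}$ separates $v$ from the $(G\setminus F)$-tail of $r$. First note that, since edge-domination of a ray depends only on its tails (separating $v$ from a tail $r'$ by a finite edge set would, after adjoining the finitely many edges of $r\setminus r'$, separate $v$ from $r$ as well), $v$ edge-dominates every tail $r'$ of $r$; by the edge-analogue of the characterization of domination recalled in the introduction, this yields infinitely many pairwise edge-disjoint paths from $v$ to $r'$.

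The crux is then a pigeonhole argument. Assume, striving for a contradiction, that some finite $F\subset \mathrm{t}(G)\setminus\{v\}$ separates $v$ from the $(G\setminus F)$-tail $r'$ of $r$. Each of the infinitely many edge-disjoint $v$-$r'$ paths must then meet $F$, and since $F$ is finite, infinitely many of them pass through a single vertex $w\in F$. Taking the final segments of these paths, from an occurrence of $w$ to their endpoint on $r'$, produces infinitely many pairwise edge-disjoint $w$-$r'$ paths, so that $w$ edge-dominates $r'$ and hence $r$. This contradicts $w\in F\subset \mathrm{t}(G)$.

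I expect this last step to be the main obstacle, and it is worth stressing why the naive approach fails: a timid vertex in $F$ may have \emph{infinite} degree, so one cannot hope to separate it off by deleting the finitely many edges incident to $F$. Working with edge-disjoint paths and pigeonholing on their common crossing vertex is exactly what sidesteps this difficulty, turning the finiteness of $F$ (rather than of its edge-boundary) into the relevant hypothesis. Once $\partial\mathrm{t}_{\mathrm{t}(G)}\subset \mathrm{t}(G)$ is established, \myref{COR_rhoUHomeo} applies verbatim and gives that $\rho_\bullet\colon \Omega_{\mathrm{t}}(G)\to \mathcal{D}_{\mathrm{t}}(G)$ is a homeomorphism.
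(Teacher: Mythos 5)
Your proposal follows the paper's route exactly: reduce to \myref{COR_rhoUHomeo} with $U=\mathrm{t}(G)$ after verifying $\partial\mathrm{t}_{\mathrm{t}(G)}\subset\mathrm{t}(G)$. The difference is that where the paper disposes of this inclusion with the one-line assertion that a $U$-timid vertex is timid \emph{for any} $U\subset \mathrm{V}(G)$, you actually prove it for $U=\mathrm{t}(G)$ -- and your instinct to do so is sound, because the paper's blanket assertion is false for arbitrary $U$. (Take $G$ to consist of a ray $r$, a vertex $w$ adjacent to every vertex of $r$, and a vertex $v$ joined to $w$ by infinitely many internally disjoint paths of length two: then $v$ edge-dominates $r$, so it is not timid, yet $v$ is $\{w\}$-timid, since deleting $w$ separates $v$ from the tail of every $G$-ray, all of which lie along $r$.) Your pigeonhole argument is precisely where the hypothesis $F\subset \mathrm{t}(G)$ enters -- the crossing vertex $w\in F$ it produces edge-dominates a ray, which contradicts timidity of $w$ but would contradict nothing for a general $U$ -- so your restricted claim is the right one, and your proof supplies an argument the paper leaves implicit.

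One small imprecision to patch: ``infinitely many pairwise edge-disjoint $v$--$r'$ paths'' does not by itself suffice for the final step. If all the paths through $w$ end at a \emph{single} vertex $x$ of $r'$, the pigeonhole only yields $w\sim_E x$, not that $w$ edge-dominates $r'$: deleting the ray-edge just beyond $x$ may still separate $w$ from the tail of $r'$ (this is also why the naive edge-analogue of the vertex-disjoint-paths characterization must be stated with care -- vertex-disjoint paths automatically meet $r$ in distinct vertices, edge-disjoint ones need not). The fix is immediate: from the edge-domination of $r'$ by $v$, build the family greedily so that the endpoints on $r'$ are pairwise distinct (having chosen finitely many paths, delete their edges together with the ray-edges up to the last endpoint used, and connect $v$ to the surviving tail). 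Then the $w$--$r'$ segments obtained after pigeonholing still end at infinitely many distinct vertices of $r'$, and $w$ genuinely edge-dominates $r'$. With that adjustment your proof is complete.
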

\begin{proof}
     Note that if $v$ is $U$-timid for some $U\subset \mathrm{V}(G)$, then $v$ is, in particular, timid. Hence, $\partial \mathrm{t}_U \subset \mathrm{t}(G)$ for any $U\subset \mathrm{V}(G)$ and the result follows from applying \myref{COR_rhoUHomeo} to $U=\mathrm{t}(G)$.
\end{proof}

\begin{rmk}\label{RMK_NotQuotient}
    Note that, mapping the class of a ray in $\Omega(G)$ to the $U$-class of the same ray gives rise to a canonical $\varepsilon \doteq [r] \in \Omega(G) \mapsto \pi_\Omega(\varepsilon) \doteq [r]_U \in \Omega_U(G)$. Analogously, $\rho \in \mathcal{D}(G) \mapsto \pi_\mathcal{D}(\rho)  = \rho^U\doteq \rho \upharpoonright [U]^{< \aleph_0} \in \mathcal{D}_U(G)$ is a well defined map. In this case, the diagram
\[
    \begin{CD}
    \Omega(G)  		@> \rho_{\bullet}>> \mathcal{D}(G) 		\\
    @V \pi_\Omega VV 			@VV \pi_\mathcal{D} V  	\\
    \Omega_U(G)			@>\rho_\bullet^U>> 	\im(\pi_\mathcal{D}) \subset \mathcal{D}_U(G) 		\\
    \end{CD}
\]
commutes.

While it is tempting to assume that both $\pi_\Omega$ and $\pi_\mathcal{D}$ are quotient maps whenever $\partial \mathrm{t}_U\subset U$ (since, by \myref{COR_rhoUHomeo}, $\rho_\bullet^U$ is a homeomorphism in this case and $[r]_U\supset [r]$ for every ray $r$ in $G$)r, we should point out that $\pi_\Omega$ (and, hence, $\pi_\mathcal{D}$) may fail to be open even when $U=\mathrm{t}(G)$: consider the graph $G = K\cup S$, where $K$ is a clique with infinitely many vertices and $S$ is a star of infinitely many rays with center $v_0\in \mathrm{V}(K)$ (see \myref{FIG_NotQuotient}). Let $U=\mathrm{t}(G)=\mathrm{V}(G\setminus K)$. Then $\partial \mathrm{t}_U\subset U=\mathrm{t}(G)$. However, let $r$ be a ray in $K$ and consider the open set $\Omega(\{v_0\},[r]) = \{[r]\}$ in $\Omega(G)$. Then $\pi_\Omega[\Omega(\{v_0\},[r])] = \{[r]_{\mathrm{t}(G)}\}$, which is not open in $\Omega_{\mathrm{t}}(G)$ because $\Omega_{\mathrm{t}}([r]_U,F)$ contains infinitely many timid-ends from $S$ for every finite $F\subset \mathrm{t}(G)$.

\end{rmk}

\begin{figure}[ht!]
    \centering
    \begin{tikzpicture}
        \def\radius{0.15}
        \def\dy{1}
        \def\dx{9}
        \def\x{0}

            \node (dots) at (\x*\dx+4*0.9,-\dy+1) {\tiny $\cdots$};
            
            \foreach \n in {0,...,4}{
                \draw[-{Latex[length=1.5mm]}] (\x*\dx,-\dy) -- ({(\x - 1 + (1.25)^(-\n))*\dx},0.8*\dy);
                \draw[-{Latex[length=1.5mm]}] (\x*\dx,-\dy) -- ({(\x + 1 - (1.25)^(-\n))*\dx},0.8*\dy);
                }

                \node (v0) at (\x*\dx-0.15,-\dy-0.1) {\tiny $v_0$};
                
                \draw[-{Latex[length=1.5mm]}] (\x*\dx,-\dy) -- (\x*\dx,-2.95*\dy);

                \foreach \i in {0,...,3}{
                    \node (K\i) at (\x*\dx,-\dy-0.5*\i) {\tiny $\bullet$};
                }
                \node (KdotsR) at (\x*\dx+0.25,-\dy-1.65) {\tiny $\vdots$};
                \node (KdotsL) at (\x*\dx-0.25,-\dy-1.65) {\tiny $\vdots$};

                \foreach \j in {2,...,3}{  
                        \draw (\x*\dx,-\dy) edge[bend left=\j*30] (\x*\dx,-\dy-0.5*\j);
                }

                \draw (\x*\dx,-\dy-0.5) edge[bend right=60] (\x*\dx,-\dy-1.5);

                \draw[dashed] (-0.75,-\dy+0.25)  rectangle (0.75,-2.8*\dy-0.5);
                \node (K) at (0.5,-2.8*\dy-0.25) {$K$};

                \draw[dashed] (-6,-\dy-0.25)  rectangle (6,\dy);
                \node (S) at (5.75,-\dy) {$S$};
                
                \draw[thick] (-6.9,-2.8*\dy-0.75)  rectangle (6.9,\dy+0.25);
                \node (G) at (6.5,\dy-0.25) {\large $G$};
            
    \end{tikzpicture}
    \caption{Figure of the graph in \myref{RMK_NotQuotient}, in which $S$ is a star of rays and $K$ is an infinite and countable clique.}
    \label{FIG_NotQuotient}
\end{figure}

    \section*{Acknowledgments}
    We thank CAPES and CNPq for the financial support. The first named author was supported by CAPES through grant number 001. The third name author was supported by CNPq through grant number 165761/2021-0. 
    
    \bibliographystyle{alpha}
    \bibliography{bibliography}

    \Addresses

    \typeout{get arXiv to do 4 passes: Label(s) may have changed. Rerun}
\end{document}